 \newtheorem{thm}{Theorem}[section]
 \newtheorem{cor}[thm]{Corollary}
 \newtheorem{lemma}[thm]{Lemma}
 \newtheorem{prop}[thm]{Proposition}
 \theoremstyle{definition}
 \newtheorem{defn}[thm]{Definition}
 \newtheorem{exmp}[thm]{Example}
 \theoremstyle{remark}
 \newtheorem{rem}{Remark}
 \newtheorem{ack}{Acknowledgment}
 \numberwithin{equation}{section}
\newcommand{\TT}{\text{$\mathcal{T}$}}
\newcommand{\BB}{\text{$\mathcal{B}$}}
\newcommand{\OO}{\text{$\mathcal{O}$}}
\newcommand{\UU}{\text{$\mathcal{U}$}}
\newcommand{\FF}{\text{$\mathcal{F}$}}
\newcommand{\GG}{\text{$\mathcal{G}$}}
\newcommand{\VV}{\text{$\mathcal{V}$}}
\newcommand{\WW}{\text{$\mathcal{W}$}}
\newcommand{\CC}{\text{$\mathcal{C}$}}
\newcommand{\LB}{\text{$\Lambda$}}
\newcommand{\sg}{\text{$\sigma$}}
\newcommand{\infim}{\operatorname{inf}}
\newcommand{\minim}{\operatorname{min}}
\newcommand{\Cat}{\operatorname{Cat}}
\newcommand{\dm}{\operatorname{dim}}
\newcommand{\str}{\operatorname{sat}}
\newcommand{\intr}{\operatorname{int}}
\newcommand{\Crit}{\operatorname{Crit}}
\newcommand{\id}{\operatorname{id}}
\newcommand{\Nil}{\operatorname{Nil}}
\newcommand{\Ker}{\operatorname{Ker}}
\newcommand{\Image}{\operatorname{Im}}
\newcommand{\Meas}{\operatorname{Meas}}
\newcommand{\MT}{\operatorname{MT}}
        \newcommand{\field}[1]{\text{$\mathbb{#1}$}}
        \newcommand{\N}{\field{N}}
        \newcommand{\Z}{\field{Z}}
        \newcommand{\Q}{\field{Q}}
        \newcommand{\R}{\field{R}}
\newdimen\theight
\def\TeXref#1{%
             \leavevmode\vadjust{\setbox0=\hbox{{\tt
                     \quad\quad  {\small \textrm #1}}}%
             \theight=\ht0
             \advance\theight by \lineskip
             \kern -\theight \vbox to
             \theight{\rightline{\rlap{\box0}}%
             \vss}%
             }}%
\begin{document}

\title{Measurable versions of the LS category on laminations}

\author{Carlos Meni\~no Cot\'on}

\address{Departamento de Xeometr\'{\i}a e Topolox\'{\i}a\\
         Facultade de Matem\'aticas\\
         Universidade de Santiago de Compostela\\
         15782 Santiago de Compostela}

\email{carlos.meninho@gmail.com}

\thanks{Supported by ``Ministerio de Ciencia
e Innovaci\'{o}n'', Spain (grants FPU and MTM2008-02640)}


\begin{abstract}
We give two new versions of the LS category for the set-up of measurable laminations defined by Berm\'udez. Both of these versions must be considered as ``tangential categories''. The first one, simply called (LS) category, is the direct analogue for measurable laminations of the tangential category of (topological) laminations introduced by Colman Vale and Mac\'ias Virg\'os. For the measurable lamination that underlies any lamination, our measurable tangential category is a lower bound of the tangential category. The second version, called the $\Lambda$-category, depends on the choice of a transverse invariant measure $\Lambda$. We show that both of these ``tangential categories'' satisfy appropriate versions of some well known properties of the classical category: the homotopy invariance, a dimensional upper bound, a cohomological lower bound (cup length), and an upper bound given by the critical points of a smooth function.
\end{abstract}

\maketitle

\tableofcontents

\section{Introduction}
The LS category is a homotopy invariant defined as the minimum number of
open subsets contractible within a topological space needed to
cover it. Many variations of this invariant has been also defined. In
particular, H.~Colman Vale and E.~Mac\'ias Virg\'os introduced a tangential version for
foliations, where they use leafwise contractions to transversals
\cite{HellenColman,Macias}.

In this paper, we consider the set-up of measurable laminations defined by Berm\'udez \cite{Bermudez,Bermudez-Hector}. They are ``laminations of measurable spaces'' in an obvious sense; there is a leaf topology, but the ambient space topology is replaced by a weaker ambient measurable structure. In measurable laminations, we use topological (or differentiable) terms to refer to the leaf topology, and measurable terms to refer to the ambient measurable structure; for instance, a measurable open set is a set that is leafwise open and measurable in the ambient space. Then we introduce two versions of the tangential category for a measurable lamination $\FF$ on a space $X$, which are new even $\FF$ is the underlying measurable lamination of a (topological) lamination. The first one, simply called its (LS) category, is a direct adaptation to measurable laminations of the tangential (or even the usual) category. The second one is called the $\Lambda$-category because it involves a transverse invariant measure $\Lambda$, whose existence is a restriction.

Let $\FF$ be a measurable lamination on a measurable space $X$. The category of $\FF$ is the minimum number of categorical measurable open sets needed to cover $X$, where ``categorical'' means that there exists a measurable continuous deformation to some transversal. Obviously, if $\FF$ is the measurable lamination that underlies any lamination, our category is a lower bound of the tangential category, and sometimes it is easier to study (measurability allows more freedom to make constructions than continuity).

To define the $\Lambda$-category of $\FF$ for a transverse invariant measure $\Lambda$, we can also take coverings of $X$ by categorical measurable open sets, but now $\Lambda$ is used to ``count'' them: we consider the sum of the measures of the transversals resulting from their deformations. The infimum of all those possible measures is the $\Lambda$-category of $\FF$.

For these two new ``tangential categories'', we prove appropriate
versions of some classical results about LS category, like their
homotopy invariance, a cohomological lower bound given by the cup
length, a dimensional upper bound (adapted to the tangential
category of foliations by W. Singhof and E. Vogt
\cite{Vogt-Singhof}), or a lower bound given the number of
critical points of a smooth function. More precisely, it is proved
that the category is less or equal than the number of critical
sets of a differentiable function, where the critical sets are
defined by using the leafwise gradient flow of the function. This
improves even the classical result because a smooth function on a
manifold may have less critical sets than critical points. On the
other hand, the $\Lambda$-category is less or equal than the
measure of the set of leafwise critical points; i.e., the critical
points of the restrictions of the function to the leaves.
Following the work of J. Schwartz \cite{Schwartz}, these relations
with critical points are obtained in the setting of measurable
Hilbert laminations because of their possible applications to
tangential variational
problems~\cite{Ballman,Lusternik-Schnirelmann}.

\section{MT-spaces and measurable laminations}

A {\em measurable topological space\/}, or {\em MT-space\/}, is
a set $X$ equipped with a \sg-algebra and a topology. Usually, measure
theoretic concepts will refer to the \sg-algebra of $X$, and
topological concepts will refer to its topology; in
general, the \sg-algebra is different from the Borel \sg-algebra
induced by the topology. An {\em MT-map\/} between MT-spaces is a
measurable continuous map. An {\em MT-isomorphism} is a map between MT-spaces that is a measurable isomorphism and a homeomorphism. Trivial examples of MT-spaces are topological spaces with their Borel
$\sg$-algebras, and measurable spaces with the discrete topology.

Let $X$ and $Y$ be MT-spaces. Suppose that there exists a measurable
embedding $i:X\to Y$ that maps measurable sets to measurable sets. Then $X$ is called an {\em MT-subspace\/} of $Y$.  Notice that, if $X$ and $Y$ are standard\footnote{Recall that a {\em Polish space\/} is a completely metrizable and separable
topological space, and a {\em standard Borel space\/} is a measurable
space isomorphic to a Borel subset of a Polish space.}, the measurability of $i$ means that it maps Borel sets to Borel sets
\cite{Srivastava}. The product $X\times Y$ is an MT-space too with the product topology and the $\sg$-algebra generated by
products of measurable sets of $X$ and $Y$.

Let $R$ be an equivalence relation on an MT-space $X$. The quotient set $X/R$ becomes an MT-space with the quotient topology and the $\sg$-algebra generated by the projections of measurable saturated sets of $X$; it can be called the {\em quotient\/} MT-space.

Let $T$ be a standard Borel space and let $P$ be a Polish space. Let $P\times T$
be endowed with the structure of MT-space defined by the
\sg-algebra generated by products of Borel subsets of $T$ and
Borel subsets of $P$, and the product of the discrete topology on
$T$ and the topology of $P$.

A {\em measurable chart} on an MT-space $X$ is an MT-isomorphism
$\varphi:U\to B\times T$, where $U$ is open and measurable in $X$
and $B$ is an open ball in $\R^n$, the simpler notation
$(U,\varphi)$ may be used for such a chart. The sets
$\varphi^{-1}(B\times\{\ast\})$ are called {\em plaques\/} of
$\varphi$, and the sets $\varphi^{-1}(\{\ast\}\times T)$ are
called {\em transversals\/} associated to $\varphi$. A {\em
measurable atlas\/} on $X$ is a countable family of foliated
measurable charts whose domains cover $X$. A {\em measurable
lamination\/} of {\em dimension\/} $n$ is an MT-space that admits
a countable measurable atlas consisting of charts
$\varphi_i:U_i\to B_i\times T_i$ ($i\in\N$), where each $B_i$ is an open ball in $\R^n$. Since we have used a
countable atlas, the ambient space is also a standard space. The
connected components of $X$ are called its {\em leaves\/}; they
are second countable connected manifolds, which may not be
Hausdorff. The {\em saturation\/} of a set $B$, denoted by
$\str(B)$, is the union of leaves that meet $B$. The saturation of
a measurable set is measurable since the measurable atlas is
countable.

The typical example of measurable lamination to keep in mind is given by
any lamination (or foliation), by considering the Borel \sg-algebra of its ambient space and its leaf
topology; in this case, ``leaves'' and ``saturations'' have the usual meanings.

In the setting of measurable laminations, the concept of $C^r$ tangential structure cannot be defined as a maximal atlas with (tangentially) $C^r$ changes of coordinates because the atlases are required to be countable, but we can proceed as follows. A measurable atlas is said to be (tangentially) $C^r$ if its coordinate changes are (tangentially) $C^r$. Then a $C^r$ structure is an equivalence class of $C^r$ measurable atlases, where two $C^r$ measurable atlases are equivalent if their union is a $C^r$ measurable atlas.

A measurable subset $T\subset X$  is called a {\em transversal\/}
if its intersection with each leaf is countable \cite{Heitsch-Lazarov}; these are
slightly more general than the transversals of \cite{Bermudez} that consist of isolated points, this kind of transversals are said to be {\em isolated\/}. Let
$\TT(X)$ be the family of transversals of $X$. This set is closed
under countable unions and intersections, but it is not a
\sg-algebra. A transversal meeting all leaves is called
{\em complete\/}.

A {\em measurable holonomy transformation\/} is a measurable
isomorphism $\gamma:T\to T'$, for $T, T' \in \TT(X)$, which maps
each point to a point in the same leaf. A {\em transverse
invariant measure\/} on $X$ is a \sg-additive map, $\LB:\TT(X)\to
[0,\infty]$, invariant by measurable holonomy transformations. The
classical definition of transverse invariant measure of a lamination is a measure on topological
transversals invariant by holonomy transformations (see e.g. \cite{Candel-Conlon}); both notions of transverse invariant measures agree
in this case \cite{Connes}. In this paper, we always consider \sg-finite
measures.

\begin{rem}\label{r:functorO}
There exists an underlying or forgetful functor $\OO$ from the category of foliated spaces and foliated maps (those that map leaves to leaves) to the category of measurable laminations and MT-maps.
\end{rem}

The leaf space of a measurable lamination $\FF$ is denoted
by $X/\FF$. Also, for any MT-subspace $Y\subset X$, $Y/\FF$ will denote the quotient space of $Y$ with the restricted relation (``lying in the same leaf of $\FF$'').

Our principal tools in this setting are the following two results (see e.g. \cite{Srivastava}).

\begin{prop}[Lusin]\label{p:Lusin1}
Let $X$ and $Y$ be standard Borel spaces and $f:X\to Y$ a
measurable map with countable fibers. Then $f(X)$ is Borel in $Y$
and there exists a measurable section $s:f(X)\to X$ of $f$. In
particular, if $f$ is injective, then $s$ is a Borel isomorphism.
Moreover there exists a countable Borel partition, $X=\bigcup_i
X_i$, such that each $f|_{X_i}$ injective.
\end{prop}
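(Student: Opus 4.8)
The plan is to reduce the statement to two classical theorems of descriptive set theory, both available for standard Borel spaces \cite{Srivastava}: the \emph{Lusin--Souslin theorem}, that an injective Borel map between standard Borel spaces sends Borel sets to Borel sets and is a Borel isomorphism onto its image; and the \emph{Lusin--Novikov theorem}, that a Borel subset of a product of standard Borel spaces all of whose vertical sections are countable is a countable union of Borel graphs. The whole proposition will then drop out by bookkeeping, and the only genuinely hard input is Lusin--Novikov.

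First I would encode $f$ as a relation. Consider $A=\{(y,x)\in Y\times X : f(x)=y\}$. Since $Y$ is standard its diagonal is Borel, and $A$ is the preimage of that diagonal under the Borel map $(y,x)\mapsto (y,f(x))$, hence Borel; its section over $y$ is exactly the fiber $f^{-1}(y)$, which is countable by hypothesis. Lusin--Novikov then yields Borel partial maps $h_n\colon D_n\to X$, with $D_n\subseteq Y$ Borel, whose graphs cover $A$. The decisive observation is that $f(h_n(y))=y$ for $y\in D_n$, so each $h_n$ is a partial section of $f$ and is in particular injective.

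From here everything is formal. Because $A=\bigcup_n \mathrm{graph}(h_n)$ projects onto $f(X)$ along the first coordinate, we get $f(X)=\bigcup_n D_n$, a countable union of Borel sets, which proves the first assertion. Disjointifying, $E_n=D_n\setminus\bigcup_{m<n}D_m$, and setting $s|_{E_n}=h_n|_{E_n}$ produces a Borel map $s\colon f(X)\to X$ with $f\circ s=\id$, the desired section. For the partition, apply Lusin--Souslin to the injective Borel maps $h_n$ to see that each image $C_n=\Image(h_n)=h_n(D_n)$ is Borel in $X$; since the graphs cover $A$ we have $X=\bigcup_n C_n$, and disjointifying again, $X_n=C_n\setminus\bigcup_{m<n}C_m$, gives a countable Borel partition. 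That $f|_{X_n}$ is injective is immediate: if $x,x'\in X_n$ share the image $y$, then both equal $h_n(y)$. Finally, when $f$ itself is injective, Lusin--Souslin applied to $f$ shows directly that $f(X)$ is Borel and that $f$ is a Borel isomorphism onto it, so the section $s$ coincides with the global inverse $f^{-1}$ and is itself a Borel isomorphism.

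The one step I expect to be the real obstacle is the appeal to Lusin--Novikov: producing the countably many Borel graphs that cover a Borel set with countable sections is the substantive theorem, whereas the reduction above is elementary once it is in hand. Since the proposition is quoted from \cite{Srivastava}, I would simply cite Lusin--Novikov (and Lusin--Souslin) rather than reprove them, and present the argument above as the derivation of the four stated conclusions from these two results.
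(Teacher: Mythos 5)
Your proposal is correct. Note, however, that the paper itself offers no proof of this proposition: it is stated as one of the ``principal tools'' and simply quoted from \cite{Srivastava}, so there is no argument in the paper to compare against. What you have written is the standard derivation that the reference itself carries out: encode $f$ by the Borel relation $A=\{(y,x): f(x)=y\}$ (Borel because standard Borel spaces have Borel diagonals), invoke Lusin--Novikov to cover $A$ by countably many Borel partial graphs $h_n\colon D_n\to X$, observe that each $h_n$ is a partial section and hence injective, and then do the bookkeeping: $f(X)=\bigcup_n D_n$ is Borel, disjointifying the $D_n$ gives the Borel section $s$, Lusin--Souslin makes each image $h_n(D_n)$ Borel so that disjointifying these gives the partition $X=\bigcup_n X_n$ with $f|_{X_n}$ injective, and the injective case follows from Lusin--Souslin applied to $f$ directly. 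All four conclusions are obtained cleanly, and your identification of Lusin--Novikov as the only substantive input is accurate; everything else is formal. The one point worth stating explicitly (you use it implicitly) is why the sets $h_n(D_n)$ cover $X$: for any $x$, the pair $(f(x),x)$ lies in $A$, hence in some graph, so $x=h_n(f(x))$. With that line added, your write-up is a complete and correct substitute for the citation.
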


\begin{thm}[Kunugui, Novikov]\label{t:Srivastava}
Let $\{V_n\}_{n\in\N}$ be a countable base for a Polish space $P$.
Let $B\subset P\times T$ be a Borel set such that
$B\cap(P\times\{t\})$ is open for every $t\in T$. Then there
exists a sequence $\{B_n\}_{n\in\N}$ of Borel sets of $T$ such
that
$$B=\bigcup_n (V_n\times B_n)\;.$$
\end{thm}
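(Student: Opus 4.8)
The plan is to write down the sets $B_n$ explicitly and then reduce the whole statement to a single measurability assertion. For each $n$ I would set
\[
B_n=\{\,t\in T: V_n\times\{t\}\subseteq B\,\}=\{\,t\in T: V_n\subseteq B^t\,\},
\]
where $B^t=\{\,p\in P:(p,t)\in B\,\}$ is the section of $B$ over $t$, which is open by hypothesis. Granting for the moment that each $B_n$ is Borel, the identity $B=\bigcup_n V_n\times B_n$ is straightforward. The inclusion $\supseteq$ is immediate, since $t\in B_n$ forces $V_n\times\{t\}\subseteq B$. For $\subseteq$ one uses the openness of the sections: given $(p,t)\in B$, the open set $B^t$ contains $p$, hence contains some basic open set $V_n$ with $p\in V_n\subseteq B^t$; therefore $t\in B_n$ and $(p,t)\in V_n\times B_n$. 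It is only here that openness of the sections enters the verification of the identity.

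Everything thus reduces to showing that each $B_n$ is Borel, and this is the real content of the statement and the step I expect to be the main obstacle. A priori one controls only the complement,
\[
T\setminus B_n=\pi_T\bigl((V_n\times T)\setminus B\bigr),
\]
the projection of a Borel set, which is merely analytic; so on the face of it $B_n$ is only co-analytic, and this naive estimate does not yield Borelness. The openness of the sections must be exploited \emph{globally}, not just in the identity above: for a general Borel $B$ with arbitrary sections the set $\{\,t:V_n\subseteq B^t\,\}$ need not be Borel, as one sees by choosing $B$ whose projection is a genuinely non-Borel analytic set. So the topology of the fibers, and not only their measurability, is doing essential work.

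To establish Borelness I would invoke the classical Kunugui--Novikov theorem in the form available in \cite{Srivastava}; the mechanism behind it is a transfinite induction on the Borel rank of $B$, in which one proves simultaneously, at matching ranks, a statement for Borel sets with open sections and its dual for Borel sets with closed sections (the two being interchanged under complementation), the base case of open and closed sets being immediate. The delicate inductive step is to handle a countable union whose section is forced to be open, and this is exactly where the topology of $P$ intervenes: through a relatively compact exhaustion $V_n=\bigcup_m W_m$ with each $\overline{W_m}$ compact and contained in $V_n$ (available, for instance, after realizing $P$ as a $G_\delta$ in the Hilbert cube $[0,1]^{\N}$), one converts the condition ``$\overline{W_m}$ is covered by an increasing sequence of open sections'' into ``$\overline{W_m}$ lies inside a single one'', thereby matching each open piece of the section with Borel transverse data. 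Carried through the hierarchy, this produces the representation with Borel transverse pieces, so that the $B_n$ above are Borel and the decomposition $B=\bigcup_n V_n\times B_n$ of the first paragraph is the required one.
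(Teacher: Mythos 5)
The paper does not prove this theorem at all: it is quoted as a classical result of Kunugui and Novikov, with a citation to Srivastava's book (``Our principal tools\dots (see e.g.\ \cite{Srivastava})''). So the only question is whether your argument is a self-contained proof, and it is not. The issue is not merely the circularity of invoking ``the classical Kunugui--Novikov theorem'' to supply exactly the step you identify as the real content --- that alone would just collapse your proof into the paper's citation. The fatal point is that your closing inference is false: the classical theorem produces \emph{some} Borel sets $C_n$ with $B=\bigcup_n(V_n\times C_n)$; it does not imply that your canonical maximal sets $B_n=\{t:V_n\subseteq B^t\}$ are Borel, and in general they are not --- not even when $B$ is open in the product.

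Concretely, let $P=\N^\N$, let $T=2^{\N^{<\N}}$ (a Polish space, hence standard Borel), and let
\[
B=\{(x,S)\in P\times T:\ \exists k\in\N,\ x|_k\notin S\}
=\bigcup_{s\in\N^{<\N}}\bigl(N_s\times\{S: s\notin S\}\bigr)\;,
\]
where $N_s\subseteq\N^\N$ is the basic clopen set of sequences extending $s$ and $x|_k$ is the length-$k$ initial segment of $x$. The displayed union shows that $B$ is open, its sections $B^S=P\setminus[S]$ (with $[S]=\{x:\forall k,\ x|_k\in S\}$) are open, and $B$ already admits a decomposition of the required form with \emph{clopen} transverse pieces, so the theorem holds trivially here. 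Nevertheless, for $V_{n_0}=N_\emptyset=P$ (a member of the standard base) your canonical set is $B_{n_0}=\{S:B^S=P\}=\{S:[S]=\emptyset\}$, whose intersection with the closed set of trees is the set of well-founded trees, a $\Pi^1_1$-complete set; hence $B_{n_0}$ is not Borel (the same happens for any basic $N_s$ by an obvious padding). So the maximal sets you wrote down are genuinely co-analytic, and the whole content of the theorem is precisely that one can pass to \emph{smaller, non-canonical} Borel sets $C_n\subseteq B_n$ while preserving the covering identity; the standard proofs (including the one in the book the paper cites) do this via Novikov's separation theorem for countable sequences of analytic sets, not by proving your $B_n$ Borel. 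Your first paragraph's verification is fine, but only once such a non-canonical choice is in hand; as written, the objects your argument needs simply do not exist, so the proposal fails at its central step. (The sketched rank induction does not repair this: closures taken in the Hilbert cube need not lie in $P$, and the sections are open only in $P$, so the compactness step is unjustified as stated.)
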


  Every measurable open MT-subspace $U$ of a measurable lamination $\FF$ is a measurable lamination (by the previous theorem); the notation $\FF_U$ will be used in this case. By restriction, any transverse invariant measure $\Lambda$ of $\FF$ induces a transverse invariant measure of $\FF_U$, which will be denoted by $\Lambda_U$.

\begin{lemma}[\cite{Menino1}]\label{l:cocycle}
Let $\varphi_i:U_i\to B_i\times T_i$ and $\varphi_j:U_j\to
B_j\times T_j$ be measurable charts of $X$. There exists a
sequence of Borel sets of $T_i$, $\{S_n\}_{n\in\N}$, and a base of
$B_i$, $\{V_n\}_{n\in\N}$, such that $\varphi_i(U_i\cap
U_j)=\bigcup_n(V_n\times S_n)$ and
$\varphi_j\circ\varphi_i^{-1}(x,t)= (g_{ijn}(x,t), f_{ijn}(t))$
for $(x, t)\in V_n\times S_n$, where each $f_{ijn}$ is a Borel
isomorphism and each $g_{ijn}$ is an MT-map.
\end{lemma}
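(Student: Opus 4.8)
The plan is to transport the overlap into the local model $B_i\times T_i$, split it by the Kunugui--Novikov theorem along a connected base, and then read off the product form of the change of coordinates, using Lusin's theorem to make the transverse component a Borel isomorphism. First I would note that $U_i\cap U_j$ is measurable and leafwise open, so, as $\varphi_i$ is an MT-isomorphism, the image $\varphi_i(U_i\cap U_j)\subset B_i\times T_i$ is Borel and its intersection with each slice $B_i\times\{t\}$ is open, the plaques of $\varphi_i$ being exactly these slices. Fixing a countable base $\{V_n\}_{n\in\N}$ of the (Polish) ball $B_i$ consisting of open balls, hence of connected sets, Theorem \ref{t:Srivastava} provides Borel sets $S_n\subset T_i$ with $\varphi_i(U_i\cap U_j)=\bigcup_n(V_n\times S_n)$.

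Writing $\varphi_j\circ\varphi_i^{-1}(x,t)=(g(x,t),h(x,t))$ with components in $B_j$ and $T_j$, I would next show that $h$ is constant along each $V_n\times\{t\}$. For $t\in S_n$ this set is connected for the leaf topology and $\varphi_j\circ\varphi_i^{-1}$ is a leafwise homeomorphism, so its image is connected; but a connected subset of $B_j\times T_j$ lies in a single slice $B_j\times\{s\}$. Thus $h(x,t)$ depends only on $t$ there, and fixing $x_0\in V_n$ defines $f_{ijn}(t):=h(x_0,t)$, which is Borel since $h$ is (a component of an MT-map) and $t\mapsto(x_0,t)$ is a Borel inclusion. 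Putting $g_{ijn}:=g|_{V_n\times S_n}$, an MT-map as the restriction of a composite of MT-maps to the measurable open set $V_n\times S_n$, yields the required identity $\varphi_j\circ\varphi_i^{-1}(x,t)=(g_{ijn}(x,t),f_{ijn}(t))$ on $V_n\times S_n$.

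The hard part will be to ensure that each $f_{ijn}$ is a Borel isomorphism, and here Lusin's theorem is decisive. I would first check that $f_{ijn}$ has countable fibers: if $f_{ijn}(t)=s$ then $\varphi_i^{-1}(x_0,t)$ lies on the leaf through the plaque $\varphi_j^{-1}(B_j\times\{s\})$, and since $\varphi_i^{-1}(\{x_0\}\times T_i)$ is a transversal it meets that leaf countably, so the fiber over $s$ is countable. Proposition \ref{p:Lusin1} then gives a countable Borel partition of each $S_n$ into pieces on which $f_{ijn}$ is injective, and on each such piece its injectivity makes $f_{ijn}$ a Borel isomorphism onto its Borel image. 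Re-indexing the resulting countable family of products $V_n\times(\text{piece of }S_n)$ by a single index, repeating the balls $V_n$ so that they still range over a base, gives the statement. I expect the genuine subtlety to be exactly this interplay between connectedness, which defines $f_{ijn}$, and the transversality/countable-fiber property, which licenses Lusin's partition; the remaining checks are routine bookkeeping about measurability and MT-maps.
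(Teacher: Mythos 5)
Your proof is correct. There is, however, nothing in the paper to compare it against: Lemma~\ref{l:cocycle} is stated with a citation to \cite{Menino1} and no proof is given here, so your argument --- Kunugui--Novikov applied to $\varphi_i(U_i\cap U_j)$ over a base of connected open balls, leafwise connectedness forcing the $T_j$-component of $\varphi_j\circ\varphi_i^{-1}$ to depend only on $t$ on each $V_n\times S_n$, and then Lusin's theorem (Proposition~\ref{p:Lusin1}) to split each $S_n$ into countably many Borel pieces of injectivity and reindex --- is the natural one and almost surely the intended argument of the cited reference. The only step you assert rather than justify is that the chart transversal $\varphi_i^{-1}(\{x_0\}\times T_i)$ meets each leaf countably; this does hold, because the plaques of $\varphi_i$ are pairwise disjoint open subsets of the leaves and the leaves are second countable, so a leaf can contain only countably many plaques of a given chart, but it is worth spelling out since the countable-fiber hypothesis is exactly what licenses the application of Lusin's theorem.
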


\begin{rem}
The previous lemma is also true replacing the open balls $B_i$ of an euclidean space by any connected and locally
connected Polish space.
\end{rem}

\begin{defn}
A foliated measurable atlas $\UU$ is called {\em regular\/} if:
  \begin{itemize}

    \item[(i)] for all $(U,\varphi)\in\UU$, there exists another measurable foliated chart $(W,\psi)$ such that the closure of each plaque in $U$ is compact, $\overline{U}\subset W$ and $\varphi=\psi|_U$; and,

    \item[(ii)] for all $(U_1,\varphi_1),(U_2,\varphi_2)\in\mathcal{U}$, each plaque of $(U_1,\varphi_1)$ meets at most one plaque of $(U_2,\varphi_2)$.

  \end{itemize}
\end{defn}

Observe that, if $\UU$ is a regular measurable atlas, then $\overline{U}$ is measurable for all $(U,\varphi)\in\UU$.

This definition of regular measurable atlas is weaker than the usual one for laminations (see
e.g. \cite{Candel-Conlon}): the locally finite condition
does not make sense for measurable laminations since there is no
ambient topology. The following result follows from
Lemma~\ref{l:cocycle}.

\begin{cor}
If a measurable lamination has a foliated measurable atlas such that each chart meets a finite number of charts, then it admits a regular measurable foliated atlas.
\end{cor}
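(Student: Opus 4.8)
The plan is to refine the given atlas $\{(U_i,\varphi_i)\}$, with $\varphi_i:U_i\to B_i\times T_i$, into a new countable atlas satisfying (i) and (ii), using the finiteness hypothesis to reduce condition (ii) to a finite simultaneous refinement governed by Lemma~\ref{l:cocycle}. Condition (i) I would dispose of first. Since each $B_i$ is an open ball in $\R^n$, every point of $B_i$ lies in a small ball $V$ with $\overline V$ compact and $\overline V\subset\widetilde V$ for a slightly larger ball $\widetilde V\subset B_i$. Covering each $B_i$ by countably many such pairs $(V,\widetilde V)$ and pulling back by $\varphi_i$, I replace $U_i$ by the countable family of charts $\varphi_i^{-1}(V\times T_i)$, each enlarged by $W=\varphi_i^{-1}(\widetilde V\times T_i)$. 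In the local model $B_i\times T_i$ the ambient topology is the product of the topology of $B_i$ with the discrete topology on $T_i$, so there is no transverse accumulation and the ambient closure of $\varphi_i^{-1}(V\times S)$ is contained in $\varphi_i^{-1}(\overline V\times S)\subset W$; moreover each plaque closure is $\varphi_i^{-1}(\overline V\times\{t\})$, which is compact. This yields (i) and preserves the cover.

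The core is condition (ii). Fix a chart $i$; by hypothesis it meets only finitely many charts, say $U_{j_1},\dots,U_{j_k}$. For each $l$ I apply Lemma~\ref{l:cocycle} to the pair $(i,j_l)$, obtaining a base $\{V^{(l)}_n\}_n$ of $B_i$, Borel sets $\{S^{(l)}_n\}_n\subset T_i$, and the coordinate change $\varphi_{j_l}\circ\varphi_i^{-1}(x,t)=(g^{(l)}_n(x,t),f^{(l)}_n(t))$ on $V^{(l)}_n\times S^{(l)}_n$, with each $f^{(l)}_n$ a Borel isomorphism. The \emph{decisive point} is that $f^{(l)}_n(t)$ does not depend on $x$: hence an entire plaque $V^{(l)}_n\times\{t\}$ with $t\in S^{(l)}_n$ lies in the overlap and is carried into the single plaque $\{f^{(l)}_n(t)\}$ of $U_{j_l}$. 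Consequently, if a refined chart has its ball contained in one $V^{(l)}_n$ and its transversal contained in the corresponding $S^{(l)}_n$ for every $l$, then each of its plaques meets at most one plaque of every neighbour $U_{j_l}$.

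I would therefore take a countable base of balls of $B_i$ and, for each ball $V$ and each choice of indices $n_1,\dots,n_k$, form the chart $\varphi_i^{-1}\big(V\times S\big)$ with $S=\bigcap_{l} S^{(l)}_{n_l}$ (a Borel subset of $T_i$) restricted to those $V$ with $V\subset\bigcap_l V^{(l)}_{n_l}$, together with the pieces lying outside a given overlap $\varphi_i(U_i\cap U_{j_l})$, where the corresponding requirement is vacuous. Because there are only finitely many neighbours, this simultaneous refinement is a finite intersection, so the whole family over all $i$ stays countable and still covers $X$; each piece is genuinely of the form (ball)$\times$(Borel set), i.e. a measurable chart. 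Performing the refinement for every chart, condition (ii) for an ordered pair of refined charts contained in $U_i$ and $U_j$ follows from the refinement carried out at the first chart, and combining this with the ball-shrinking of the first step gives (i) as well.

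The step I expect to be the main obstacle is the measurability of the transverse bookkeeping in the previous paragraph: one must guarantee, Borel-measurably in $t$, that the chosen ball $V$ does not straddle two components of the $t$-dependent footprint $\bigcup_{n:\,t\in S^{(l)}_n} V^{(l)}_n$ that are sent to different plaques of $U_{j_l}$. This is exactly what the structure of Lemma~\ref{l:cocycle} controls — the index sets $S^{(l)}_n$ are Borel and the plaque assignment $f^{(l)}_n$ depends only on $t$ — so that the admissible transversals are countable Boolean combinations of the $S^{(l)}_n$, hence Borel, while the finiteness hypothesis keeps these combinations finite in the neighbour index $l$. Everything else (a connected plaque having connected, hence single-plaque, image, and the passage from a chart to its enlargement) is routine in the local product model.
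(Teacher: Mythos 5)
Your plan---shrink balls for condition (i), then use Lemma~\ref{l:cocycle} and the finiteness hypothesis to refine each chart simultaneously with respect to its finitely many neighbours---is certainly the strategy the paper intends, since the paper itself offers nothing beyond the sentence that the corollary ``follows from Lemma~\ref{l:cocycle}''. But your execution has a genuine gap, and it is not the measurability bookkeeping you flag at the end: it is that your covering scheme misses points, and that the intermediate goal you are driving at is unattainable at exactly those points. Your refined charts come in two kinds: pieces $V\times S$ with $V\subset\bigcap_l V^{(l)}_{n_l}$ and $S\subset\bigcap_l S^{(l)}_{n_l}$ (inside the overlap decompositions), and pieces disjoint from the overlaps $\varphi_i(U_i\cap U_{j_l})$. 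A point of $U_i$ lying on the \emph{leafwise boundary} of $U_i\cap U_{j_l}$ is covered by neither: it is not in the (leafwise open) overlap, and any measurable open chart containing it meets the overlap, so it cannot be of the second kind. Worse, at such a point the property you want---every plaque of every refined chart meets at most one plaque of the \emph{original} neighbour $U_{j_l}$---can fail for every possible refinement. Concretely, foliate $\R^2$ by horizontal lines, take $U_i=(0,3)\times(0,2)$ with the product chart structure, and take $U_j=\bigl((0,3)\times(0,3)\bigr)\setminus\{(3/2,y)\mid y\geq 1\}$, a legitimate measurable chart whose plaques at heights $y\in[1,3)$ are the two intervals $(0,3/2)\times\{y\}$ and $(3/2,3)\times\{y\}$; both charts fit into an atlas in which every chart meets finitely many charts. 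Any refined chart of $U_i$ containing $(3/2,3/2)$ has a plaque containing $(3/2-\varepsilon,3/2+\varepsilon)\times\{3/2\}$, which meets \emph{two} plaques of $U_j$.

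The proof therefore has to be reorganized so that only the pairwise condition between two \emph{new} charts is verified, and that is where the structure of Lemma~\ref{l:cocycle} genuinely enters. Distinct plaques of a refined chart $B=\varphi_j^{-1}(V'\times S')$ lie in distinct plaques of $U_j$; hence a plaque of a refined chart $A\subset U_i$ can meet two plaques of $B$ only if there are $t'_1\neq t'_2\in S'$ and indices $n_1,n_2$ (for the pair $(U_j,U_i)$) with $V'\cap V_{n_k}\neq\emptyset$, $t'_k\in S_{n_k}$ and $f_{n_1}(t'_1)=f_{n_2}(t'_2)$. When $n_1=n_2$ this is excluded by the injectivity of the $f_n$ (this is what rescues the example above), but for $n_1\neq n_2$ nothing excludes it: it occurs whenever a single plaque of $U_i$ crosses $U_j$ twice, meeting two plaques of $U_j$ through two different pieces of the overlap decomposition (a long flow box wrapping around a torus already does this). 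To kill these configurations one must separate $t'_1$ from $t'_2$ transversally when building the transversals of the new charts, and your proposed mechanism---Boolean combinations of the sets $S^{(l)}_n$---cannot do it, because such a bad pair $t'_1,t'_2$ may lie in exactly the same sets $S^{(l)}_n$; the separation necessarily involves the \emph{maps} $f_n$, i.e.\ a Lusin--Novikov-type decomposition of $T_j$ adapted to the countably many partial Borel injections $f_{n_2}^{-1}\circ f_{n_1}$. That transverse separation argument is the real core of the corollary and is absent from your proposal. (A smaller point: your claim that the ambient closure of $\varphi_i^{-1}(V\times S)$ is contained in $\varphi_i^{-1}(\overline{V}\times S)$ uses that compact subsets of leaves are closed, i.e.\ Hausdorffness of the leaves, which the paper explicitly does not assume.)
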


From now on, we consider only measurable laminations that
admit regular measurable foliated atlases.

\begin{exmp}[Measurable suspensions \cite{Bermudez}]
Let $P$ be a connected, locally path connected and semi-locally $1$-connected Polish space, and let $S$
be a standard space. Let $\Meas (S)$ denote the group of
measurable transformations of $S$. Let $h : \pi_{1}(P, x_{0})\to \Meas(S)$ be a homomorphism. Let
$\widetilde{P}$ the universal covering of $P$ and consider
the action of $\pi_{1}(P, x_{0})$ on the MT-space
$\widetilde{P}\times S$ given by $g\cdot(x, y) = (xg^{-1}, h(g)(y))$. The corresponding quotient MT-space, $\widetilde{P}\times_h S$, is called the {\em measurable suspension\/} of
$h$. If $P$ is a manifold, then $\widetilde{P}\times_h S$ is a measurable lamination, $\{\ast\}\times S$
is a complete transversal, and its leaves are covering spaces of $P$.
\end{exmp}

\section{Category of measurable laminations}

A measurable lamination $(X,\FF)$ induces a foliated
measurable structure $\FF_U$ in each measurable open set $U$ (by
Theorem~\ref{t:Srivastava}). The space $U\times\R$ admits an
obvious foliated structure $\FF_{U\times\R}$ whose leaves are
products of leaves of $\FF_U$ and $\R$. Let $(Y,\GG)$ be another measurable lamination. An MT-map $H:\FF_{U\times\R}\to \GG$ is called a {\em \upn{(}measurable\upn{)}
homotopy\/}, and it is said that the maps $H(\cdot,0)$ and $H(\cdot,1)$ are {\em \upn{(}measurably\upn{)} homotopic\/}. We use the term {\em
\upn{(}measurable\upn{)} deformation\/} when $\GG=\FF$ and $H( -
,0)$ is the inclusion map of $U$. A deformation such that $H( - ,1)$ is constant on the leaves of
$\FF_U$ is called a {\em \upn{(}measurable\upn{)} contraction\/} or an \FF-{\em
contraction\/}; in this case, $U$ is called a {\em categorical\/} or \FF-{\em categorical\/} measurable open set.

The {\em\upn{(}LS\upn{)} category\/} is the lowest number of
categorical measurable open sets that cover the measurable lamination. On one leaf foliations, this definition agree
with the classical category. The category of \FF\ is
denoted by $\Cat(\FF)$. It is clear that it is a homotopy invariant.

All of these definitions have obvious versions for laminations by using ambient space continuity instead of measurability, obtaining the so called tangential category~\cite{HellenColman,Macias}.

\begin{defn}
Let $U\subset (X,\FF)$ be a measurable open subset. Define the {\em relative category\/} of $U$, $\Cat(U,\FF)$, as the minimum number of categorical measurable open sets  in $(X,\FF)$ that form a cover of $U$.
\end{defn}

\begin{rem}
Clearly, $\Cat(U,\FF)\leq\Cat(\FF_{U})$.
\end{rem}

\begin{prop}[Subadditivity of the relative category]\label{p:subaddtivity of the relative category}
Let $\{U_i\}_{i\in\N}$ be a countable family of measurable open subsets of $X$. Then
\[
  \Cat\left(\bigcup_iU_i,\FF\right)\leq\sum_i\Cat(U_i,\FF)\;.
\]
\end{prop}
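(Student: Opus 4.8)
The plan is to pool together optimal covers of the individual $U_i$. First I would dispose of the trivial case: if $\Cat(U_i,\FF)=\infty$ for some $i$, then the right-hand side is infinite and the inequality is automatic. Hence I may assume that each $n_i:=\Cat(U_i,\FF)$ is finite.

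For each $i$, the definition of the relative category furnishes categorical measurable open sets $V_{i,1},\dots,V_{i,n_i}$ of $(X,\FF)$ whose union contains $U_i$. The crucial observation is that the $V_{i,j}$ are categorical in the \emph{ambient} lamination $(X,\FF)$ itself, not merely relative to $U_i$; this is built into the definition of $\Cat(U,\FF)$, which counts categorical measurable open sets of $(X,\FF)$ covering $U$. Consequently each $V_{i,j}$ is an admissible member of a cover of any measurable subset of $X$, and in particular of $\bigcup_i U_i$.

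Next I would form the family $\{V_{i,j}:i\in\N,\ 1\le j\le n_i\}$. Since for each fixed $i$ the subfamily $\{V_{i,j}\}_{j}$ already covers $U_i$, the whole family covers $\bigcup_i U_i$; and every member is a categorical measurable open set of $(X,\FF)$. Counting the members gives at most $\sum_i n_i=\sum_i\Cat(U_i,\FF)$ sets, so by the definition of $\Cat\bigl(\bigcup_i U_i,\FF\bigr)$ as the minimum over such covers, the asserted inequality follows at once.

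I do not expect a genuine obstacle here: the statement is essentially the combinatorial subadditivity of covering numbers, and the only thing to verify is that the pooled collection is an admissible cover, which holds precisely because ``categorical'' is a property with respect to the fixed ambient lamination $(X,\FF)$ and is therefore preserved when a set is reused in a different cover. The hypothesis that the index set is countable guarantees that the pooled family is again countable (a countable union of finite families), which is compatible with the category taking values in $\N\cup\{\infty\}$; no convergence or measurability issue arises, since each $V_{i,j}$ was already a measurable open subset of $X$.
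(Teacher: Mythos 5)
Your proposal is correct: the paper states this proposition without proof, treating it as elementary, and the pooling argument you give (taking optimal covers of each $U_i$ by sets categorical in the ambient lamination $(X,\FF)$ and uniting them into a countable cover of $\bigcup_i U_i$) is exactly the intended argument. Your handling of the infinite cases and the observation that ``categorical'' refers to the fixed ambient lamination, so the sets remain admissible when reused, are precisely the points that make the statement immediate.
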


\section{$\Lambda$-category}

\begin{lemma}[Kallman \cite{Kallman}]\label{l:Kallman}
  Let $P\times T$ be a product of a Polish space $P$ with a standard space $T$, and let $\pi:P\times T\to T$ denote the second factor projection. Let $B\subset P\times T$ be a measurable subset such that $B\cap(P\times\{t\})$ is \sg-compact for all $t\in T$. Then $\pi(B)$ is measurable.
\end{lemma}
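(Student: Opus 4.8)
The plan is to read ``measurable'' as ``Borel'': since $P$ is Polish and $T$ is standard, the product \sg-algebra on $P\times T$ generated by rectangles of Borel sets is exactly the Borel \sg-algebra of the standard Borel space $P\times T$, so $B$ is a Borel set and the task is to prove that $\pi(B)=\{t\in T: B\cap(P\times\{t\})\neq\emptyset\}$ is Borel. This is the Arsenin--Kunugui phenomenon: for merely \emph{closed} sections the projection is only analytic, so the hypothesis that the sections are \sg-compact must be used in an essential way. I would first treat the case in which every section $B_{t}:=B\cap(P\times\{t\})$ is compact, and then recover the \sg-compact case as a countable union.

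For the compact case I would pass to the complement $B^{c}$, whose sections $P\setminus B_{t}$ are open. Fixing a countable base $\{V_{n}\}$ of $P$, Theorem~\ref{t:Srivastava} then yields Borel sets $C_{n}\subset T$ with $B^{c}=\bigcup_{n}(V_{n}\times C_{n})$, so that $B_{t}=P\setminus\bigcup\{\,V_{n} : t\in C_{n}\,\}$, and $t\notin\pi(B)$ precisely when the open family $\{V_{n}:t\in C_{n}\}$ covers $P$. The role of compactness is to turn covering conditions into countable Boolean combinations of the Borel conditions $\{t\in C_{n}\}$: for a \emph{compact} region $R\subset P$ one has
\[
  \{\,t : R\cap B_{t}=\emptyset\,\}=\bigcup_{\substack{F\subset\N\ \mathrm{finite}\\ R\subset\bigcup_{n\in F}V_{n}}}\ \bigcap_{n\in F}C_{n}\;,
\]
a countable union of Borel sets. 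When $P$ is locally compact (as for the $\R^{n}$-modelled leaves of an ordinary lamination) the closures of small basic balls are such regions $R$, and a greedy construction — choosing at each stage the least basic ball whose closure meets the compact section, shrinking diameters geometrically, and invoking completeness of a fixed compatible metric to pass to a nonempty limit — exhibits both a Borel section $s:\pi(B)\to P$ of $\pi$ and the Borelness of the predicates ``$B_{t}$ meets this region'', whence $\pi(B)$ is Borel. Equivalently, this shows $t\mapsto B_{t}$ is a Borel map into the Polish hyperspace $K(P)$ of compact subsets, and $\pi(B)$ is the preimage of $K(P)\setminus\{\emptyset\}$.

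To descend from compact to \sg-compact sections I would write $B=\bigcup_{m}B^{(m)}$ as an increasing union of Borel sets with compact sections, so that $\pi(B)=\bigcup_{m}\pi(B^{(m)})$ is a countable union of Borel sets by the previous step; the pieces $B^{(m)}$ are carved out by total-boundedness witnesses (finite $2^{-j}$-nets drawn from a fixed dense sequence) selected measurably in $t$, for which Proposition~\ref{p:Lusin1} and Theorem~\ref{t:Srivastava} are the natural tools.

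The hard part is the compact case \emph{without} local compactness of $P$ --- exactly the situation of the Hilbert laminations motivating this section, where closed balls are not compact and the greedy scheme above has no compact regions $R$ to cover. There ``$B_{t}$ meets $R$'' is itself a projection, and to keep it Borel one must instead exploit the characterisation ``compact $=$ closed $+$ totally bounded'' for a fixed complete metric, arranging the finite $\epsilon$-net approximations along a well-founded tree and controlling the construction by a transfinite rank (or by a reflection argument). This is the step I expect to require the most care, and it is precisely the content supplied by Kallman~\cite{Kallman}.
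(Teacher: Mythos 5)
The first thing to say is that the paper offers no proof of this lemma at all: it is stated with the attribution to Kallman \cite{Kallman} and used as a black box, exactly as Theorem~\ref{t:Srivastava} is. The statement is, in substance, the Arsenin--Kunugui theorem (refined by Saint Raymond): projections of Borel sets with $\sigma$-compact sections are Borel. That is one of the genuinely deep results of classical descriptive set theory, so the benchmark for your attempt is a complete proof of that theorem, not anything written in the paper.

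Judged against that benchmark, your proposal has two genuine gaps, both of which you yourself flag. What you actually carry out is correct: after applying Theorem~\ref{t:Srivastava} to the complement to write $B^{c}=\bigcup_n(V_n\times C_n)$, your finite-subcover formula shows that $\{\,t: R\cap B_t=\emptyset\,\}$ is Borel for each compact $R\subset P$, and hence that $\pi(B)$ is Borel whenever $P$ is $\sigma$-compact and the sections are closed (in particular, for compact sections in a locally compact $P$). But (a) the passage from $\sigma$-compact to compact sections --- writing $B=\bigcup_m B^{(m)}$ with each $B^{(m)}$ Borel with compact sections --- is Saint Raymond's theorem, of essentially the same depth as the lemma itself; it cannot be extracted from Proposition~\ref{p:Lusin1} and Theorem~\ref{t:Srivastava} by ``measurably selecting finite $2^{-j}$-nets'', because the predicate ``this finite set of basic balls captures $B_t$ up to $\epsilon$'' is itself defined by a quantifier over $P$, i.e.\ by a projection, which is exactly the kind of set one is trying to prove Borel. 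And (b) the compact-section case over a non-locally-compact $P$ is not proved at all: you correctly locate the difficulty (well-founded trees, ranks, reflection) and then hand it back to \cite{Kallman}. These gaps are not cosmetic for this paper: in Proposition~\ref{p:measurability} the sections are $\sigma$-compact but in general neither compact nor closed, so the special cases you do prove would not suffice for the paper's own application without step (a). In the end your text is an honest and well-structured reduction followed by the same citation the paper makes in one line; as a proof of the lemma it is incomplete precisely at the steps that make the theorem hard.
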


\begin{prop}\label{p:measurability}
For any measurable open set $U$ and any tangential deformation $H$, the set $H(U\times\{1\})$ is measurable.
\end{prop}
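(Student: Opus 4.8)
The plan is to reduce the statement to a local computation in foliated charts and then to invoke Kallman's Lemma~\ref{l:Kallman}. Write $H_1 = H(\cdot,1)\colon U\to X$; being an MT-map it is leafwise continuous, so it carries each leaf of $\FF_U$ into a single leaf of $\FF$. The first thing I would record is the geometric fact that a deformation keeps leaves inside their ambient leaves. Since $H(\cdot,0)$ is the inclusion of $U$, for every leaf $L$ of $\FF_U$ the set $H(L\times\R)$ is connected and contains $L=H(L\times\{0\})$, hence it lies in the leaf $\hat L$ of $\FF$ with $L\subseteq\hat L$; in particular $H_1(L)\subseteq\hat L$. Consequently, if $H_1(u)=y$, then the $\FF_U$-leaf of $u$ maps into, and (by the same anchoring at time $0$) is contained in, the $\FF$-leaf $M$ of $y$; thus $H_1^{-1}(y)\subseteq M$, so every fibre of $H_1$ lies in a single leaf of $\FF$.

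Next I would localize. Fixing a countable atlas $\{(V_k,\varphi_k)\}$ of $\FF_U$ and a countable atlas $\{(W_m,\psi_m)\}$ of $\FF$, and setting $V_{k,m}=V_k\cap H_1^{-1}(W_m)$ (a measurable open set), one has
\[
 H(U\times\{1\})=H_1(U)=\bigcup_{k,m}H_1(V_{k,m}),
\]
so by countable additivity it suffices to show that each $H_1(V_{k,m})$ is measurable in $W_m$. Fix such $k,m$ and transport everything through the charts: after endowing the standard space $S=T_{V_k}$ with a compatible Polish topology, the domain $\varphi_k(V_k)\cong B'\times S$ becomes Polish, the range $\psi_m(W_m)\cong B\times T$ is a product of a Polish space with a standard space, and $F:=\psi_m\circ H_1\circ\varphi_k^{-1}$ is a measurable map on the measurable set $\varphi_k(V_{k,m})$, with $F(\varphi_k(V_{k,m}))$ the set whose measurability we want.

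To exhibit this image as a projection, I would pass to the graph
\[
 G=\{\,(p,q)\in(B'\times S)\times(B\times T): p\in\varphi_k(V_{k,m}),\ q=F(p)\,\},
\]
which is Borel since it is the preimage of the diagonal of the standard space $B\times T$ under a measurable map, and then apply Lemma~\ref{l:Kallman} to the second-factor projection, taking the Polish factor to be $B'\times S$ and the standard factor to be $B\times T$. Because this projection sends $G$ onto $F(\varphi_k(V_{k,m}))$, measurability of the image will follow once the fibres $G\cap\bigl((B'\times S)\times\{q\}\bigr)\cong F^{-1}(q)$ are shown to be $\sigma$-compact.

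This verification is where the geometric fact does the work, and I expect it to be the crux. Fix $q=\psi_m(y)$; the fibre equals $\varphi_k(H_1^{-1}(y)\cap V_{k,m})$. By the first paragraph $H_1^{-1}(y)$ lies in the single leaf $M$ of $y$, and $M$ meets the chart $V_k$ in at most countably many plaques, since a leaf meets each transversal in a countable set; hence the fibre is contained in countably many plaques $B'\times\{s_i\}$. On each such plaque the restriction of $F$ is continuous for the leaf topology and $\{q\}$ is closed in the Hausdorff chart $B\times T$, so $F^{-1}(q)\cap(B'\times\{s_i\})$ is closed in $B'$ and therefore $\sigma$-compact; a countable union of such sets is again $\sigma$-compact. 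Kallman's Lemma then gives the measurability of each $H_1(V_{k,m})$, and the countable union displayed above yields the measurability of $H(U\times\{1\})$.
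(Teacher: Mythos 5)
Your proof is correct and follows essentially the same route as the paper's: localize to countably many chart pairs, form the (Borel) graph of the time-one map, verify that its fibres are $\sigma$-compact in the Polish topology obtained by making the transversal factor Polish (leafwise continuity gives closedness on each plaque, and the fact that each fibre lies in a single leaf, hence in countably many plaques, gives $\sigma$-compactness), and then project via Kallman's Lemma~\ref{l:Kallman}. Your explicit connectedness argument anchoring the deformation at time $0$ is a welcome elaboration of the paper's parenthetical remark that the preimage meets only countably many leaves ``otherwise $H$ would not be a tangential deformation''.
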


\begin{proof}
By the Kunugui-Novikov's theorem (Theorem~\ref{t:Srivastava}), there exist countable families, $\{P_n\times T_n\}$ and $\{P'_n\times T'_n\}$ ($n\in\N$), and MT-embeddings $f_n:P_n\times T_n\to\FF$ and $g_n:P'_n\times T'_n\to \FF$ such that $U=\bigcup_n f_n(P_n\times T_n)$ and $H(\cdot,1)\circ f_n(P_n\times T_n)\subset g_n(P'_n\times T'_n)$. Hence we only have to prove that each $g_n^{-1}\circ H(\cdot,1)\circ f_n (P_n\times T_n)$ is measurable. Consider a topology on $T_n$ so that it is isomorphic to $[0,1]$, $\N$ or a finite set \cite{Kechris}, and let $P_n\times T_n$ be endowed with the product topology $\tau$, becoming a Polish space. Let $\pi:(P_n\times T_n,\tau)\times P'_n\times T'_n\to P'_n\times T'_n$ be the second factor projection. For each point $(x,t)\in P'_n\times T'_n$, the set $f_n^{-1}\circ H(\cdot,1)^{-1}\circ g_n (x',t')$ is \sg-compact in $(P_n\times T_n,\tau)$. By the continuity of $H(\cdot,1)\circ f_n$ (with the leaf topology), this preimage is closed on each plaque $P_n\times\{t\}$ ($t\in T_n$). On the other hand, this preimage only cuts a countable number of leaves (otherwise $H$ would not be a tangential deformation), and therefore it only cuts a countable number of plaques. Hence this preimage is also \sg-compact with the topology $\tau$. Let
  \[
    B_n=\{\,((x,t),(x',t'))\mid g_n^{-1}\circ H(\cdot,1)\circ f_n(x,t)=(x',t')\,\}\;,
  \]
which is measurable in $(P_n\times T_n,\tau)\times P'_n\times T'_n$. Then, by Lemma~\ref{l:Kallman}, $\pi(B_n)$ is measurable. But $\pi(B_n)=g_n^{-1}\circ H(\cdot,1)\circ f_n(P_n\times T_n)$.
\end{proof}

\begin{prop}[\cite{Menino1}]\label{p:coherentextension}
Let $(X,\FF)$ be a measurable lamination with a transverse invariant measure
\LB. There exists a unique Borel measure $\widetilde{\LB}$ on $X$ such
that $\widetilde{\LB}(T)=\LB(T)$ for all generalized transversal
$T$, and satisfying the following properties:
\begin{itemize}

\item[(i)] If $B$ is a measurable set so that $B\not\subset\pi^{-1}(S)$ for any transversal $S$ with $\LB(S)=0$, and $\partial(B\cap L)\neq\emptyset$ for each leaf $L$ that meets $B$, then
$\widetilde{\LB}(B)=\infty$.

\item[(ii)] If $\LB(S)=0$ for some $S\in\mathcal{B}$, then
$\widetilde{\LB}(\str(S))=0$.

\item[(iii)] If $B$ is a measurable set so that $\LB(S)=\infty$ for all transversal $S$ with $B\subset\str(S)$, then $\widetilde{\LB}(B)=\infty$.

\end{itemize}
\end{prop}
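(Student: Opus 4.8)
The plan is to construct $\widetilde{\LB}$ explicitly using a regular measurable atlas, prove it is well-defined and $\sigma$-additive via the transverse invariance of $\LB$, establish the defining property on transversals, and then verify the three listed properties. The uniqueness will follow from the fact that the measure is determined on a generating family of sets.

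\begin{proof}[Proof proposal]
First I would fix a regular measurable atlas $\{\varphi_i:U_i\to B_i\times T_i\}$. By Theorem~\ref{t:Srivastava} and Lemma~\ref{l:cocycle}, on each chart I can write a measurable set $B\cap U_i$ as a countable union of ``boxes'' of the form $V\times S$ with $V\subset B_i$ Borel and $S\subset T_i$ a transversal slice. The idea is to define $\widetilde{\LB}$ locally by integrating the leafwise ($n$-dimensional Lebesgue / Riemannian) volume against the transverse measure $\LB$: for a box $V\times S$ set $\widetilde{\LB}(\varphi_i^{-1}(V\times S))=\mathrm{vol}(V)\cdot\LB(S)$, where $\mathrm{vol}$ is the leafwise volume read off in the chart. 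One then extends this to all measurable subsets of $U_i$ by the usual Carath\'eodory/monotone-class procedure and patches the local definitions together. The patching is where transverse invariance of $\LB$ is essential: on overlaps $U_i\cap U_j$ the coordinate change has the form $(x,t)\mapsto(g_{ijn}(x,t),f_{ijn}(t))$ from Lemma~\ref{l:cocycle}, and $\LB\circ f_{ijn}=\LB$ precisely because each $f_{ijn}$ is a holonomy transformation; combined with a leafwise change-of-variables (Fubini) argument this guarantees the local measures agree on overlaps, so they glue to a single Borel measure on $X$.

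Next I would verify the normalization $\widetilde{\LB}(T)=\LB(T)$ for every transversal $T$. Since a transversal meets each plaque in a countable set, in the box decomposition its leafwise volume is zero, so naively the integral formula gives $0$; the correct reading is that on a purely transverse set the transverse factor carries all the mass, i.e.\ $\widetilde{\LB}$ restricted to slices $\{*\}\times S$ reproduces $\LB(S)$. I would make this precise by treating the leafwise and transverse directions by the appropriate product-measure convention (the leafwise volume of a point being the counting contribution), and then use the holonomy invariance to see that different box decompositions of the same transversal give the same value. This is the step that pins down the measure on the generating family $\TT(X)$ and hence, by the monotone-class / $\pi$-$\lambda$ argument, establishes uniqueness of $\widetilde{\LB}$ among Borel measures with this property.

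Finally I would check properties (i)--(iii). For (ii), if $\LB(S)=0$ then its saturation $\str(S)$ is measurable (countable atlas) and is covered by countably many holonomy translates of pieces of $S$, each of transverse measure zero, so $\widetilde{\LB}(\str(S))=0$ by $\sigma$-additivity and invariance. For (i) and (iii) the point is to show $\widetilde{\LB}(B)=\infty$ by producing, inside $B$, infinite leafwise volume over a transversally positive set. In case (iii), $B$ lies only in saturations of infinite-measure transversals, so any box decomposition forces an infinite transverse factor; in case (i), the hypothesis that $\partial(B\cap L)\neq\emptyset$ on every leaf meeting $B$, together with $B$ not being transversally null, forces the leafwise slices to have positive volume over a set of positive $\LB$-measure accumulating without bound, again giving infinite total mass. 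The main obstacle I anticipate is the gluing/consistency step: showing that the locally defined integrals genuinely agree on overlaps requires combining the Kunugui--Novikov decomposition, the cocycle structure of Lemma~\ref{l:cocycle}, and holonomy invariance of $\LB$ into a single leafwise Fubini computation, and one must be careful that the ``transverse measure against leafwise volume'' product is well-defined and independent of the chart even when leaves have holonomy and the decomposition into boxes is highly non-unique.
\end{proof}
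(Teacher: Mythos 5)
Your construction is built on the wrong measure, and this is fatal rather than a repairable detail. You define $\widetilde{\LB}$ on boxes by $\widetilde{\LB}(\varphi_i^{-1}(V\times S))=\mathrm{vol}(V)\cdot\LB(S)$, i.e.\ the pairing of $\LB$ with leafwise Lebesgue/Riemannian volume. That is the classical Ruelle--Sullivan-type ambient measure, and it assigns measure \emph{zero} to every transversal: a transversal meets each plaque in a countable set, whose leafwise volume vanishes, so your integral formula gives $0$ and the required normalization $\widetilde{\LB}(T)=\LB(T)$ fails for every transversal of positive measure. Your attempted repair (``the transverse factor carries all the mass'', the leafwise volume of a point being ``the counting contribution'') cannot be made coherent: by monotonicity, any single Borel measure assigning $\mathrm{vol}(V)\cdot\LB(S)$ to boxes must assign to a slice $\{x\}\times S$, which is contained in boxes $V_\varepsilon\times S$ with $\mathrm{vol}(V_\varepsilon)\to 0$, the value $0$, not $\LB(S)$; so no measure satisfies both of your conventions simultaneously. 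The correct construction --- stated in the remark immediately following the proposition, and proved in \cite{Menino1} (this paper itself imports the result and gives no proof) --- pairs $\LB$ with the leafwise \emph{counting} measure: $\widetilde{\LB}(B)=\int_T\#\bigl(B\cap(P\times\{t\})\bigr)\,d\LB(t)$ in a chart $P\times T$. With counting measure everything you want actually works: a transversal decomposes by Proposition~\ref{p:Lusin1} into countably many pieces projecting injectively to $T$, and holonomy invariance of $\LB$ gives the normalization; sets with uncountable leafwise slices sitting over a set of positive transverse measure get value $\infty$, which is exactly what (i) and (iii) demand.

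The downstream steps inherit the error. Property (i) is simply false for your measure: a box $V\times S$ with $0<\mathrm{vol}(V)<\infty$ and $0<\LB(S)<\infty$ satisfies the hypotheses of (i) (nonempty leafwise boundary, not contained in the saturation of a null transversal) yet has finite volume-pairing mass; your claim that the hypotheses force leafwise volume ``accumulating without bound'' has no mechanism behind it. Separately, your uniqueness argument via a $\pi$-$\lambda$/monotone-class theorem on the family $\TT(X)$ does not apply: the coherent extension is wildly non-$\sigma$-finite (every set with uncountable leafwise slices over positive transverse measure has infinite mass), and $X$ is not a countable union of finite-measure transversals, since any countable union of transversals still meets each leaf countably. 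Agreement on transversals alone therefore does not determine the measure by a generating-family argument; this is precisely why conditions (i)--(iii) are part of the characterization. Uniqueness has to be proved by showing that the normalization together with (i)--(iii) forces the value of $\widetilde{\LB}$ on an arbitrary Borel set, e.g.\ by splitting it into its leafwise-countable part (a transversal, where the value is $\LB$) and its leafwise-uncountable part (where (i)--(iii) force the value to be $0$ or $\infty$).
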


The unique extended measure $\widetilde{\LB}$, given by Proposition~\ref{p:coherentextension}, is called the {\em coherent extension\/} of \LB.

\begin{rem}
Observe that, in measurable charts of the form $P\times T$, the coherent extension can be given on measurable sets with \sg-compact intersection with the plaques as the pairing of the counting measure with the transverse invariant measure \LB\ \cite{Menino1}:
$$\widetilde{\LB}(B)=\int_T \#(B\cap (P\times\{t\}))\,d\LB(t)\;.$$
\end{rem}

Let $\LB$ be a transverse invariant measure for
\FF, and $\widetilde{\LB}$ its coherent extension. According to Proposition~\ref{p:measurability}, we can define
$$
  \tau_\LB(U)=\infim\{\,\widetilde{\LB}(H(U\times\{1\})\mid H\ \text{is a tangential deformation of}\ U\,\}\;.
$$
Then the \LB-{\em category\/} of $(\FF,\LB)$ is defined as
$$\Cat(\FF,\LB)=\infim_{\UU}\sum_{U\in\UU}\tau_\LB(U)\;,$$
where \UU\ runs in the countable coverings of $X$ by measurable open sets. The
countability condition of the coverings is needed, otherwise the
\LB-category would be directly zero when \LB\ has no atoms. If the homotopies
used in this definition are required to be $C^r$ on leaves, then the term $C^r$ \LB-{\em
category\/} is used, with the notation $\Cat^r(\FF,\LB)$.

The category and the \LB-category are connected by
the following result.

\begin{prop}\label{p:posit}
Let $(X,\FF,\LB)$ be a measurable lamination with a transverse
invariant measure, and let $U$ be a measurable open set in $X$. If
$\tau_\LB(U)<\infty$, then there exists a categorical measurable open set $U'\subset U$ such that $\widetilde{\LB}(U\setminus U')=0$.
\end{prop}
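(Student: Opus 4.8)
The plan is to take a single deformation that already witnesses finiteness and then discard a null, saturated set of ``bad'' leaves, after which the \emph{same} deformation restricts to a contraction. Since $\tau_\LB(U)<\infty$, choose a tangential deformation $H$ of $U$ with $\widetilde{\LB}(B)<\infty$, where $B:=H(U\times\{1\})$ is the measurable set produced by Proposition~\ref{p:measurability}. The whole argument rests on one dichotomy for a connected component $C$ of $U\cap L$ (a leaf of $\FF_U$): since $H$ is leafwise continuous, $H(C\times\{1\})$ is a \emph{connected} subset of $L$; and if $H(C\times\{1\})$ meets every plaque of $L$ in a finite set, then it is discrete — each of its points is isolated, because plaques are open in the leaf topology and a finite subset of a plaque is discrete — so, being also connected, it reduces to a single point. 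Thus $H(\cdot,1)$ is constant on $C$ exactly when the plaque cardinalities of $B$ along $L$ are finite, and this observation is insensitive to the possible non-Hausdorffness of the leaves, as it only uses that plaques are open and Hausdorff.

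Next I would locate the bad leaves. Fixing a countable regular atlas $\varphi_i:U_i\to B_i\times T_i$ and using the integral expression for the coherent extension (the pairing of the counting measure with $\LB$), which on each chart reads $\widetilde{\LB}(B)=\int_{T_i}\#\bigl(B\cap\varphi_i^{-1}(B_i\times\{t\})\bigr)\,d\LB(t)$, the finiteness of $\widetilde{\LB}(B)$ forces the set $S_i:=\{\,t\in T_i : \#(B\cap\varphi_i^{-1}(B_i\times\{t\}))=\infty\,\}$ to be $\LB$-null; indeed, if $\LB(S_i)>0$ then by monotonicity $\widetilde{\LB}(B)\ge\widetilde{\LB}\bigl(B\cap\varphi_i^{-1}(B_i\times S_i)\bigr)=\int_{S_i}\infty\,d\LB=\infty$, a contradiction. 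Each $\Sigma_i:=\varphi_i^{-1}(\{\ast\}\times S_i)$ is then a transversal with $\LB(\Sigma_i)=0$, so $\Sigma:=\bigcup_i\Sigma_i\in\TT(X)$ satisfies $\LB(\Sigma)=0$, and its saturation $N:=\str(\Sigma)$ has $\widetilde{\LB}(N)=0$ by Proposition~\ref{p:coherentextension}(ii). As $N$ is a union of whole leaves it is clopen in the leaf topology, so $U':=U\setminus N$ is again a measurable open set, and $\widetilde{\LB}(U\setminus U')\le\widetilde{\LB}(N)=0$.

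It remains to check that $U'$ is categorical. Every leaf $L$ meeting $U'$ is disjoint from $\Sigma$, hence contains no plaque indexed by a point of any $S_i$; therefore every plaque of $L$ meets $B$ in a finite set, so by the dichotomy of the first paragraph $H(\cdot,1)$ is constant on each component of $U\cap L$, and a fortiori on each component of $U'\cap L$. Consequently the restriction $H|_{U'\times\R}$ is a deformation of $U'$ whose time-one map is constant on the leaves of $\FF_{U'}$, i.e.\ an $\FF$-contraction, so $U'$ is an $\FF$-categorical measurable open set contained in $U$ with $\widetilde{\LB}(U\setminus U')=0$, as required.

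The step I expect to be the main obstacle is the passage, at the start of the second paragraph, from $\widetilde{\LB}(B)<\infty$ to the almost-everywhere finiteness of the plaque cardinalities. This needs that $B$ have $\sigma$-compact intersections with the plaques, so that the integral formula for $\widetilde{\LB}$ is available: this $\sigma$-compactness is precisely what is extracted in the proof of Proposition~\ref{p:measurability}, the plaque fibers of $B$ being continuous (hence $\sigma$-compact) images of the $\sigma$-compact source fibers considered there. It also uses the measurability of the counting function $t\mapsto\#(B\cap\varphi_i^{-1}(B_i\times\{t\}))$ (equivalently of each $S_i$), which is standard for Borel sets with countable sections (cf.\ Proposition~\ref{p:Lusin1}); everything else is a formal consequence of the coherent extension and of the elementary topological dichotomy above.
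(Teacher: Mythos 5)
Your proof is correct, and it reaches the conclusion by a genuinely more explicit route than the paper's. The paper's own proof is three lines: since $\widetilde{\LB}(H(U\times\{1\}))<\infty$, the structural conditions of Proposition~\ref{p:coherentextension} are invoked to decompose $H(U\times\{1\})=B\cup T$ with $\widetilde{\LB}(B)=0$ and $T$ a transversal of $\FF$, and then $U':=H(\cdot,1)^{-1}(T)$ is declared to satisfy the required conditions. You instead derive the decomposition by hand: using the chart-wise integral formula for the coherent extension (the Remark following Proposition~\ref{p:coherentextension}), you show that the set of chart-transversal points over which $B=H(U\times\{1\})$ has infinite plaque sections is $\LB$-null, and you delete the saturation $N$ of that null transversal, keeping $U'=U\setminus N$. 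This buys you exactly the three things the paper leaves implicit: openness of $U'$ is automatic because the deleted set is saturated (whereas the openness of $H(\cdot,1)^{-1}(T)$, a preimage of a leafwise countable set under a leafwise continuous map, itself requires the connectedness argument); the nullity $\widetilde{\LB}(U\setminus U')=0$ follows directly from condition (ii) of Proposition~\ref{p:coherentextension} (whereas for the preimage one must also use that $H$ preserves leaves); and the categorical property itself is established by your discreteness/connectedness dichotomy, which you correctly note survives non-Hausdorffness of the leaves. The price is the need to verify that $B$ has $\sigma$-compact plaque sections before the integral formula applies, and your argument for this is sound: the preimage of a plaque under $H(\cdot,1)$ is open in a second countable, locally Euclidean leaf, hence $\sigma$-compact, and its image is exactly the plaque section of $B$ (note this is a slightly different argument from the one in Proposition~\ref{p:measurability}, where the $\sigma$-compact sets are point-fibers, not plaque-fibers). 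One citation is off target: the measurability of $t\mapsto\#\bigl(B\cap\varphi_i^{-1}(B_i\times\{t\})\bigr)$, equivalently of $S_i$, is not covered by Proposition~\ref{p:Lusin1}, since Lusin--Novikov applies to sets with countable sections and the sections of $B$ need not be countable --- detecting where they are infinite is the whole point of $S_i$. This is harmless, however: the paper's Remark already presupposes the measurability of the counting function for sets with $\sigma$-compact plaque sections, and it can also be derived from Lemma~\ref{l:Kallman} by expressing $\{t\mid\#(B\cap\varphi_i^{-1}(B_i\times\{t\}))\geq k\}$ through projections of the sets $B\cap\varphi_i^{-1}(V\times T_i)$ for basic open $V\subset B_i$.
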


\begin{proof}
There exists a tangential deformation of $U$ such that
$\widetilde{\LB}(H(U\times\{1\}))<\infty$. This means, by the conditions of the coherent extension, that
$H(U\times\{1\})=B\cup T$, where $B$ is a Borel set with
$\widetilde{\LB}(B)=0$ and $T$ is a transversal of \FF. The set $U'=H(\cdot,1)^{-1}(T)$ satisfies the required
conditions.
\end{proof}

\begin{defn}
Let $(X,\FF,\LB)$ be a foliated measurable space with a transverse
invariant measure. A {\em null-transverse\/} set is a measurable
set $B$ such that $\widetilde{\LB}(B)=0$.
\end{defn}

The following propositions are elementary.

\begin{prop}\label{p:nullset}
Let $(X,\FF,\LB)$ be a measurable lamination with a transverse
invariant measure, and let $B$ be a null-transverse set. Then
$\Cat(\FF,\LB)$ can be computed by using only coverings of
$X\setminus \str(B)$. If $B$ is saturated, then $\Cat(\FF,\LB)=\Cat(\FF_{X\setminus B},\LB_{X\setminus B})$.
\end{prop}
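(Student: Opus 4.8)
The plan is to reduce both assertions to a single fact: the saturation is again null-transverse, i.e.\ $\widetilde{\LB}(\str(B))=0$. To establish this I would first produce a transversal $S\subseteq B$ with $\str(S)=\str(B)$ by choosing, in each chart $\varphi_i\colon U_i\to B_i\times T_i$ of a fixed regular atlas, one point in every plaque that $B$ meets; the measurability of such a selection is guaranteed by Lusin's theorem (Proposition~\ref{p:Lusin1}), and the union over the countable atlas is a genuine transversal (it meets each leaf in countably many charts, each in countably many plaques, hence countably) meeting exactly the leaves that $B$ meets. Since the coherent extension agrees with $\LB$ on transversals and is monotone, $\LB(S)=\widetilde{\LB}(S)\le\widetilde{\LB}(B)=0$, so property~(ii) of Proposition~\ref{p:coherentextension} yields $\widetilde{\LB}(\str(B))=\widetilde{\LB}(\str(S))=0$.

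For the first assertion, write $A=\str(B)$ and observe that in each chart $A\cap U_i$ equals $\varphi_i^{-1}(B_i\times T_i^{A})$, where $T_i^{A}=\{\,t: B_i\times\{t\}\subseteq\varphi_i(A\cap U_i)\,\}$ is measurable (using that $A$ is saturated, so each plaque lies in $A$ or is disjoint from it). Because the transverse factor carries the discrete topology, $B_i\times T_i^A$ is open, so $\{A\cap U_i\}_i$ is a countable cover of $A$ by measurable open sets, each of $\widetilde{\LB}$-measure zero by the previous paragraph. Applying the constant (identity) deformation gives $\tau_\LB(A\cap U_i)\le\widetilde{\LB}(A\cap U_i)=0$. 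Now, given any countable cover $\UU$ of $X\setminus A$ by measurable open sets, the enlarged family $\UU\cup\{A\cap U_i\}_i$ covers $X$ with the same total $\tau_\LB$-sum; hence $\Cat(\FF,\LB)\le\sum_{U\in\UU}\tau_\LB(U)$, and taking the infimum shows that $\Cat(\FF,\LB)$ is bounded above by the infimum over covers of $X\setminus A$. The reverse inequality is immediate, since every cover of $X$ is in particular a cover of $X\setminus A$.

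For the second assertion, assume $B$ is saturated, so $\str(B)=B$ and, by the first part, $\Cat(\FF,\LB)$ may be computed from covers of the saturated measurable open subspace $X\setminus B$ on which $\FF_{X\setminus B}$ is defined. The key point is that, $X\setminus B$ being saturated, any tangential deformation of a measurable open $U\subseteq X\setminus B$ keeps every leaf inside $X\setminus B$; hence the deformations of such a $U$ in $\FF$ and in $\FF_{X\setminus B}$ coincide, and $\widetilde{\LB}$ restricts to the coherent extension of $\LB_{X\setminus B}$ on subsets of $X\setminus B$. Consequently $\tau_\LB(U)=\tau_{\LB_{X\setminus B}}(U)$ for all such $U$, and the covers of $X\setminus B$ by measurable open sets of $X$ contained in it are exactly the covers of the measurable lamination $\FF_{X\setminus B}$. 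Passing to infima gives $\Cat(\FF,\LB)=\Cat(\FF_{X\setminus B},\LB_{X\setminus B})$.

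I expect the only genuine obstacle to be the first paragraph, namely that saturation preserves null-transversality, for it is exactly there that the axioms of the coherent extension are invoked; the measurable selection of $S$ and the verification that it is a transversal require the Lusin-type results, whereas the remainder is formal bookkeeping made possible by the transverse discreteness of the plaque topology (which is what makes $A\cap U_i$ automatically open and hence admissible in a cover).
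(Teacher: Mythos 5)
Your second and third paragraphs are essentially fine (note that the paper itself offers no proof of this proposition at all --- it is listed under ``the following propositions are elementary'' --- so your attempt can only be judged on its own terms): once $\widetilde{\LB}(\str(B))=0$ is known, adjoining $\str(B)$, a saturated and hence leafwise open measurable set, to any countable cover of $X\setminus\str(B)$ costs nothing, since the identity deformation gives $\tau_\LB(\str(B))\leq\widetilde{\LB}(\str(B))=0$; and in the saturated case a tangential deformation of a measurable open $U\subseteq X\setminus B$ cannot leave $X\setminus B$ (deformations move points within their leaves), so $\tau_\LB$ and the admissible covers are literally the same for $\FF$ and for $\FF_{X\setminus B}$.

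The genuine gap is in your first paragraph. You build a transversal $S\subseteq B$ with $\str(S)=\str(B)$ by ``choosing one point in every plaque that $B$ meets'' and invoke Proposition~\ref{p:Lusin1} for measurability. That proposition applies to maps with \emph{countable} fibers, whereas here the fibers of the projection $\varphi_i(B\cap U_i)\to T_i$ are the plaque sections of $B$, which are arbitrary Borel sets; for instance $B$ could be a measurable open set, with uncountable sections. For Borel sets with uncountable sections neither the Borelness of the projection (it is only analytic in general) nor a Borel selection is guaranteed: there are Borel subsets of the plane admitting no Borel uniformization, and the selection theorems that do apply (Jankov--von Neumann) produce only universally measurable selections, outside the paper's Borel framework. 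So the existence of your $S$ is unjustified. The fact you need, namely that $\widetilde{\LB}(B)=0$ implies $\widetilde{\LB}(\str(B))=0$, is nevertheless true, but should be extracted from Proposition~\ref{p:coherentextension} rather than from a selection: write $B=B_1\cup B_2$, where $B_1=B\setminus\str(X\setminus B)$ is the (saturated, measurable) union of the leaves entirely contained in $B$ and $B_2=B\setminus B_1$. Then $\widetilde{\LB}(\str(B_1))=\widetilde{\LB}(B_1)\leq\widetilde{\LB}(B)=0$. Every leaf $L$ meeting $B_2$ satisfies $B_2\cap L=B\cap L$, a nonempty proper subset of the connected leaf $L$, so $\partial(B_2\cap L)\neq\emptyset$; since $\widetilde{\LB}(B_2)=0<\infty$, the contrapositive of property (i) of Proposition~\ref{p:coherentextension} yields a transversal $S$ with $\LB(S)=0$ and $B_2\subseteq\str(S)$, and property (ii) then gives $\widetilde{\LB}(\str(B_2))\leq\widetilde{\LB}(\str(S))=0$. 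Hence $\widetilde{\LB}(\str(B))=0$, and the remainder of your argument goes through verbatim.
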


\begin{prop}
Let $T$ be a measurable transversal which meets each leaf at
most in one point. Then $\LB(T)\leq \Cat(\FF,\LB)$.
\end{prop}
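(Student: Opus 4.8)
The plan is to show that every countable covering $\UU=\{U_i\}$ of $X$ by measurable open sets satisfies $\LB(T)\leq\sum_i\tau_\LB(U_i)$; taking the infimum over all such coverings then yields the claim. I may assume that every $\tau_\LB(U_i)<\infty$, since otherwise the right-hand side is already infinite and there is nothing to prove. First I would fix $\epsilon>0$ and, for each $i$, choose a tangential deformation $H_i$ of $U_i$ with $\widetilde{\LB}(H_i(U_i\times\{1\}))\leq\tau_\LB(U_i)+\epsilon/2^i$; this is possible by the definition of $\tau_\LB$ as an infimum, and $H_i(U_i\times\{1\})$ is measurable by Proposition~\ref{p:measurability}.

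The core of the argument is local to each $U_i$. Since $T$ is a transversal and $U_i$ is measurable, $T\cap U_i$ is a measurable subset of $T$ and hence itself a transversal. I would consider the map $\phi_i:T\cap U_i\to X$ given by $\phi_i(x)=H_i(x,1)$. Because $H_i$ is a tangential (leafwise) deformation, $\phi_i(x)$ lies in the same leaf as $x$. The decisive use of the hypothesis on $T$ is that $\phi_i$ is then injective: if $\phi_i(x)=\phi_i(y)$, then $x$ and $y$ lie in a common leaf, and since $T$ meets each leaf at most once, $x=y$. The same reasoning shows that the image $\phi_i(T\cap U_i)=H_i((T\cap U_i)\times\{1\})$ meets each leaf at most once, so it is a transversal contained in $H_i(U_i\times\{1\})$.

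Next I would promote $\phi_i$ to a measurable holonomy transformation. As a measurable injection between standard Borel spaces, $\phi_i$ has Borel image and a measurable inverse on that image by Lusin's theorem (Proposition~\ref{p:Lusin1}); since it also preserves leaves, it is by definition a measurable holonomy transformation. The transverse invariance of $\LB$ then gives $\LB(T\cap U_i)=\LB(H_i((T\cap U_i)\times\{1\}))$. Combining this with the inclusion $H_i((T\cap U_i)\times\{1\})\subset H_i(U_i\times\{1\})$, the monotonicity of $\widetilde{\LB}$, and the agreement $\widetilde{\LB}=\LB$ on transversals (Proposition~\ref{p:coherentextension}), I obtain $\LB(T\cap U_i)\leq\widetilde{\LB}(H_i(U_i\times\{1\}))\leq\tau_\LB(U_i)+\epsilon/2^i$.

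Finally I would assemble the pieces. Since $T=\bigcup_i(T\cap U_i)$ and $\LB$ is $\sigma$-additive, hence subadditive, on transversals, summing the previous estimate gives $\LB(T)\leq\sum_i\LB(T\cap U_i)\leq\sum_i\tau_\LB(U_i)+\epsilon$. Letting $\epsilon\to 0$ and then taking the infimum over coverings completes the proof. I expect the only delicate point to be the verification that $\phi_i$ is a genuine measurable holonomy transformation---in particular that its inverse is measurable---which is exactly where Lusin's theorem is needed; everything else is bookkeeping with the invariance and monotonicity properties already established.
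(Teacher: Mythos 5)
Your proof is correct, and since the paper offers no proof of this proposition (it is listed among the ``elementary'' ones), your argument is exactly the intended one: push $T\cap U_i$ forward by $H_i(\cdot,1)$, use the hypothesis that $T$ meets each leaf at most once to get injectivity and that the image is again a transversal, invoke Lusin's theorem (Proposition~\ref{p:Lusin1}) to see this is a measurable holonomy transformation, and conclude via invariance of $\LB$, monotonicity of $\widetilde{\LB}$, agreement of $\widetilde{\LB}$ with $\LB$ on transversals, and countable subadditivity of $\LB$. The only (immaterial) nitpick is that $\sum_i\epsilon/2^i$ equals $2\epsilon$ rather than $\epsilon$ if your indexing starts at $i=0$.
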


\begin{prop}\label{p:boundsusp}
Let $(X,\FF,\LB)$ be a measurable lamination with a transverse
invariant measure. If $(X,\FF)$ is a
measurable suspension $\widetilde{M}\times_h S$, then $\Cat(\FF,\LB)\leq \Cat(M)\cdot \LB(S)$.
\end{prop}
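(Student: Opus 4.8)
The plan is to build an explicit covering of the suspension $\widetilde{M}\times_h S$ from a categorical covering of the base manifold $M$, and then estimate the $\LB$-category of each resulting piece by the transverse measure $\LB(S)$. Let me sketch the setup. Recall that in the suspension, leaves are covering spaces of $M$, the set $\{\ast\}\times S$ is a complete transversal, and points of $\widetilde{M}\times_h S$ are classes $[(x,s)]$ under the action $g\cdot(x,s)=(xg^{-1},h(g)(s))$. Write $\Cat(M)=k$ and fix a covering $M=V_1\cup\dots\cup V_k$ by categorical open sets, each admitting a (topological) contraction $G_i:V_i\times[0,1]\to M$ with $G_i(\cdot,0)=\mathrm{incl}$ and $G_i(\cdot,1)\equiv m_i$ constant. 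Each $G_i$ lifts to a homotopy of the universal cover $\widetilde{M}$, and this lift is equivariant with respect to the $\pi_1(M)$-action, so it descends to a leafwise deformation of the corresponding measurable open set $U_i\subset\widetilde{M}\times_h S$ lying over $V_i$.

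First I would make precise the descent of each contraction. The preimage $p^{-1}(V_i)$ in $\widetilde M$ is a disjoint union of copies of $V_i$ indexed by $\pi_1(M)$, and on each copy the lifted contraction $\widetilde{G}_i$ retracts that sheet to a single lifted point; crossing with $S$ and passing to the quotient gives a measurable open set $U_i=[p^{-1}(V_i)\times S]$ together with a measurable leafwise deformation $H_i$ contracting each leaf-piece in $U_i$ to the transversal. The key point is that the image $H_i(U_i\times\{1\})$ is contained in a transversal that meets each leaf in a set in bijection with the fiber $S$; concretely, it is carried by $\{\widetilde{m}_i\}\times S$ modulo the action, which is a single transversal isomorphic to $S$ via the holonomy. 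Thus each $U_i$ is a categorical measurable open set, and $\tau_\LB(U_i)\le\widetilde{\LB}\bigl(H_i(U_i\times\{1\})\bigr)=\LB(S)$, using the remark that $\widetilde{\LB}$ restricted to a transversal agrees with $\LB$.

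With the covering $X=U_1\cup\dots\cup U_k$ in hand, the definition of the $\LB$-category as an infimum over countable coverings gives immediately
\[
  \Cat(\FF,\LB)\le\sum_{i=1}^{k}\tau_\LB(U_i)\le k\cdot\LB(S)=\Cat(M)\cdot\LB(S)\;,
\]
which is the claimed bound. The structural inequality is therefore just subadditivity of the defining sum over a particular finite cover.

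The main obstacle I expect is \emph{not} the counting inequality but the careful verification that the lifted-and-descended deformation $H_i$ is genuinely a measurable tangential deformation in the sense required (an MT-map on $\FF_{U_i\times\R}$ with the right leafwise behavior), and that its terminal image is exactly a transversal of $\LB$-measure equal to $\LB(S)$ rather than some larger set. The delicate bookkeeping is that the $\pi_1(M)$-action shuffles the sheets over $V_i$, so the contracted points on different sheets get identified with different points of $S$ via $h$; I must check that collecting all these terminal points over all sheets yields a transversal whose $\LB$-measure is still just $\LB(S)$ (each leaf contributes its fiber once, and holonomy-invariance of $\LB$ guarantees the measure is computed as $\LB(S)$, not a multiple). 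Establishing measurability of $H_i$ will rely on the equivariance of the lift together with the standard-space machinery (Proposition~\ref{p:measurability} guarantees the terminal image is measurable once $H_i$ is known to be a tangential deformation), so the crux is producing $H_i$ as a bona fide MT-map and identifying its image with a copy of $S$.
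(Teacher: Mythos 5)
The paper never actually proves this proposition---it is one of the statements dismissed as ``elementary'' just before Proposition~\ref{p:nullset}---so there is no official proof to compare against; your argument is correct and is surely the intended one. Two small remarks. First, your claim that $p^{-1}(V_i)$ is a disjoint union of copies of $V_i$ indexed by $\pi_1(M)$ holds only when $V_i$ is connected (a null-homotopic inclusion then kills the image of $\pi_1(V_i)$), but you never need this sheet structure: applying the homotopy lifting property to $G_i\circ(p\times\id)$ with initial lift the inclusion $p^{-1}(V_i)\hookrightarrow\widetilde{M}$ produces a unique lift $\widetilde{G}_i$, and uniqueness forces its $\pi_1(M)$-equivariance, which is exactly what lets it descend to a measurable tangential deformation $H_i$ of $U_i=[p^{-1}(V_i)\times S]$. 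Second, the measure bookkeeping you flag as the delicate point resolves just as you anticipate: since $G_i(\cdot,1)\equiv m_i$, the terminal image lies in $[p^{-1}(m_i)\times S]$, and this set coincides with the single copy $\{[(\widetilde{m}_i,y)]\mid y\in S\}$ of $S$ because every point $(\widetilde{m}_i g^{-1},y')$ is identified by the group action with $(\widetilde{m}_i,h(g^{-1})y')$; the bijection $[(\widetilde{x}_0,y)]\mapsto[(\widetilde{m}_i,y)]$ from the canonical complete transversal is a measurable holonomy transformation, so invariance of $\LB$ together with the fact that the coherent extension $\widetilde{\LB}$ restricts to $\LB$ on transversals (Proposition~\ref{p:coherentextension}) yields $\tau_\LB(U_i)\le\LB(S)$, and summing over the $\Cat(M)$ sets $U_i$ finishes the proof.
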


\begin{prop}\label{p:products}
For a manifold $M$ and a standard Borel space $T$, let $M\times T$
be foliated as a product. Then $\Cat(M\times T,\LB)=\Cat(M)\cdot\LB(T)$ for every measure \LB\ on $T$, considered as an
invariant measure of $M\times T$.
\end{prop}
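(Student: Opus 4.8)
The plan is to establish the two inequalities separately, working under the standing assumption that $M$ is connected, so that the leaves of the product lamination are exactly the slices $M\times\{t\}$ and the induced transverse invariant measure is $\LB(S)=\int_T\#(S\cap(M\times\{t\}))\,d\LB(t)$ for $S\in\TT(M\times T)$; the same fibrewise-counting formula computes $\widetilde{\LB}$ on the images appearing below, by the remark following Proposition~\ref{p:coherentextension}. I will also use the elementary fact that, since $M$ is path connected, an open set all of whose connected components are contractible in $M$ is itself contractible in $M$ (contract each component and then slide the resulting points to a common basepoint along paths, which keeps the homotopy continuous because the components are open); hence the paper's $\Cat(M)$ agrees with the classical LS category computed with single-point contractions.

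For the upper bound I would take a minimal categorical cover $V_1,\dots,V_m$ of $M$, $m=\Cat(M)$, with leafwise homotopies $h_j:V_j\times[0,1]\to M$ contracting $V_j$ to a point $p_j$. Setting $U_j=V_j\times T$ and $H_j((x,t),s)=(h_j(x,s),t)$ produces a tangential contraction of the measurable open set $U_j$ whose final image is the transversal $\{p_j\}\times T$. Since $\widetilde{\LB}(\{p_j\}\times T)=\LB(T)$, we get $\tau_\LB(U_j)\le\LB(T)$, and summing over the cover $\{U_j\}$ yields $\Cat(M\times T,\LB)\le m\,\LB(T)=\Cat(M)\cdot\LB(T)$.

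The lower bound is the substantial direction. Given any countable cover $\{U_i\}$ of $M\times T$ by measurable open sets and any $\varepsilon>0$, choose tangential deformations $H_i$ with $\widetilde{\LB}(T_i)\le\tau_\LB(U_i)+\varepsilon/2^i$, where $T_i:=H_i(U_i\times\{1\})$ (measurable by Proposition~\ref{p:measurability}), and set $n_{i,t}=\#(T_i\cap(M\times\{t\}))$. The crux is the pointwise-in-$t$ inequality $\sum_i n_{i,t}\ge\Cat(M)$ for every $t\in T$. To prove it, restrict $H_i$ to the leaf $M\times\{t\}$, obtaining a continuous homotopy of the inclusion of the open set $U_{i,t}=U_i\cap(M\times\{t\})$ into $M$. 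If $n_{i,t}=\infty$ for some $i$ the inequality is trivial; otherwise the final map has finite image on $M\times\{t\}$, hence (being continuous with each component mapping to a connected, countable, therefore singleton set) is constant on every component of $U_{i,t}$. Grouping the components of $U_{i,t}$ according to their image point yields, for each of the $n_{i,t}$ image points, an open set $W_{i,t,k}\subset M$ whose inclusion is null-homotopic; these $\sum_i n_{i,t}$ sets cover $M$, so by the minimality in the definition of $\Cat(M)$ their number is at least $\Cat(M)$. Finally I would integrate: by the fibrewise formula and Tonelli (all integrands being nonnegative),
\[
  \sum_i\widetilde{\LB}(T_i)=\int_T\sum_i n_{i,t}\,d\LB(t)\ge\int_T\Cat(M)\,d\LB(t)=\Cat(M)\cdot\LB(T),
\]
whence $\sum_i\tau_\LB(U_i)+\varepsilon\ge\Cat(M)\cdot\LB(T)$; letting $\varepsilon\to0$ and taking the infimum over covers gives $\Cat(M\times T,\LB)\ge\Cat(M)\cdot\LB(T)$.

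The main obstacle I anticipate is the passage, inside the lower bound, from a tangential deformation of finite transverse measure to an honest collection of classically categorical subsets of $M$: one must justify that finiteness of the image forces the leafwise terminal map to be locally constant on components (so that the pieces $W_{i,t,k}$ are genuinely contractible in $M$), and one must handle the conventions $\infty\cdot 0=0$ when either $\Cat(M)$ or $\LB(T)$ is infinite or zero. The measurability of each $T_i$ and of the fibre-counting functions $t\mapsto n_{i,t}$, needed to apply Tonelli, is exactly what Proposition~\ref{p:measurability} and the remark after Proposition~\ref{p:coherentextension} provide.
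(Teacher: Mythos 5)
The paper does not actually prove this proposition---it is one of the statements collected under ``The following propositions are elementary'' and left without argument---so there is no official proof to compare against; judged on its own terms, your proof is correct and supplies exactly the details the paper omits. Your two directions are the natural ones: the upper bound via the product sets $V_j\times T$ contracting onto $\{p_j\}\times T$, and the lower bound by restricting each tangential deformation to a leaf $M\times\{t\}$, noting that a continuous map with finite image is constant on each component, grouping components by their terminal point to obtain classically categorical open subsets of $M$, and integrating the pointwise inequality $\sum_i n_{i,t}\ge\Cat(M)$ against $\LB$ using the fibrewise counting formula for $\widetilde{\LB}$ (legitimate here because the terminal images have $\sigma$-compact intersections with the leaves, being continuous images of open subsets of a manifold). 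Two small points should be tidied up. First, your standing assumption that $M$ is connected is not in the statement; it is harmless, but the reduction deserves a sentence: for disconnected $M$ the classical category (with null-homotopic inclusions) is additive over components, since a homotopy of an inclusion to a constant map cannot move points between components, and then Proposition~\ref{p:catsubspaces} applied to the saturated partition $\{M_j\times T\}$ gives the general case from the connected one. Second, the degenerate cases are indeed benign, but for the reason that $\Cat(M)\ge 1$ and, $M$ being a finite-dimensional manifold, $\Cat(M)\le\dim M+1<\infty$, so the only genuinely extended values come from $\LB(T)\in\{0,\infty\}$, where your inequalities already close the argument.
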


\begin{prop}\label{p:catsubspaces}
Let $\{U_n\}$ \upn{(}$n\in\N$\upn{)} be a covering by saturated measurable open
sets of $(X,\FF,\LB)$. Then $\Cat(\FF,\LB)\leq\sum_n\Cat(\FF_{U_n},\LB_{U_n})$.
Here, the equality holds if $\{U_n\}$ is a partition.
\end{prop}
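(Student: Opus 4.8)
The plan is to prove the two inequalities separately: the first, $\Cat(\FF,\LB)\leq\sum_n\Cat(\FF_{U_n},\LB_{U_n})$, for an arbitrary covering by saturated measurable open sets, and the second, which yields the reverse inequality, under the extra hypothesis that $\{U_n\}$ is a partition. The whole argument rests on one localization principle that I would establish first: if $W$ is a saturated measurable open set and $V\subset W$ is measurable and open, then $\tau_\LB(V)=\tau_{\LB_W}(V)$. Indeed, since $W$ is a union of leaves, every leafwise trajectory issuing from a point of $V\subset W$ stays inside $W$; hence a map $H$ is a tangential deformation of $V$ in $\FF$ if and only if it is one in $\FF_W$, so the two families of admissible deformations coincide. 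Moreover the restriction of the coherent extension $\widetilde{\LB}$ to the saturated open set $W$ satisfies the characterizing properties of Proposition~\ref{p:coherentextension} for $(\FF_W,\LB_W)$, so by uniqueness $\widetilde{\LB}|_W=\widetilde{\LB_W}$. As the image $H(V\times\{1\})$ lies in $W$, the two candidate values $\widetilde{\LB}(H(V\times\{1\}))$ and $\widetilde{\LB_W}(H(V\times\{1\}))$ agree, and taking infima gives the claimed equality.

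For the inequality $\Cat(\FF,\LB)\leq\sum_n\Cat(\FF_{U_n},\LB_{U_n})$, I would fix $\varepsilon>0$ and, for each $n$, choose a countable covering $\UU_n$ of $U_n$ by measurable open subsets of $\FF_{U_n}$ with $\sum_{V\in\UU_n}\tau_{\LB_{U_n}}(V)\leq\Cat(\FF_{U_n},\LB_{U_n})+\varepsilon2^{-n}$. Each such $V$ is measurable and open in $X$ because $U_n$ is, so $\bigcup_n\UU_n$ is a countable covering of $X$ by measurable open sets. Applying the localization principle to $W=U_n$ gives $\tau_\LB(V)=\tau_{\LB_{U_n}}(V)$ for every $V\in\UU_n$, whence the definition of $\Cat(\FF,\LB)$ yields $\Cat(\FF,\LB)\le\sum_n\sum_{V\in\UU_n}\tau_\LB(V)\le\sum_n\Cat(\FF_{U_n},\LB_{U_n})+\varepsilon$; letting $\varepsilon\to0$ finishes this half.

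For the reverse inequality under the partition hypothesis, fix $\varepsilon>0$, choose a countable covering $\UU$ of $X$ by measurable open sets with $\sum_{U\in\UU}\tau_\LB(U)\le\Cat(\FF,\LB)+\varepsilon$, and for each $U\in\UU$ a tangential deformation $H_U$ with $\widetilde{\LB}(H_U(U\times\{1\}))\le\tau_\LB(U)+\varepsilon_U$, where $\sum_{U\in\UU}\varepsilon_U\le\varepsilon$. The decisive observation is that, because every block $U_m$ is saturated, $H_U$ cannot carry a point of $U\setminus U_n$ (which lies in some $U_m$, $m\neq n$) into $U_n$; consequently
\[
  H_U(U\times\{1\})\cap U_n=H_U((U\cap U_n)\times\{1\})\;,
\]
and $H_U$ restricts to a tangential deformation of the measurable open set $U\cap U_n$ inside $\FF_{U_n}$. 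Using $\widetilde{\LB}|_{U_n}=\widetilde{\LB_{U_n}}$ one gets $\tau_{\LB_{U_n}}(U\cap U_n)\le\widetilde{\LB}(H_U(U\times\{1\})\cap U_n)$. Since $\{U\cap U_n\}_{U\in\UU}$ is a countable covering of $U_n$, summing over $U$ bounds $\Cat(\FF_{U_n},\LB_{U_n})$; summing then over $n$, interchanging the two sums (all terms being nonnegative), and using that $\{U_n\}$ is a partition so that $\sum_n\widetilde{\LB}(H_U(U\times\{1\})\cap U_n)=\widetilde{\LB}(H_U(U\times\{1\}))$ by countable additivity, I arrive at $\sum_n\Cat(\FF_{U_n},\LB_{U_n})\le\sum_{U\in\UU}\widetilde{\LB}(H_U(U\times\{1\}))\le\Cat(\FF,\LB)+2\varepsilon$. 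Letting $\varepsilon\to0$ gives the reverse inequality, and combined with the first half the equality.

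The main obstacle is the set identity displayed above together with the additive decomposition of $\widetilde{\LB}(H_U(U\times\{1\}))$ over the partition: both depend essentially on saturation, which confines each leafwise trajectory to a single block, and on the compatibility $\widetilde{\LB}|_{U_n}=\widetilde{\LB_{U_n}}$ of the coherent extension with restriction to saturated opens. Measurability of every image appearing here is guaranteed by Proposition~\ref{p:measurability}, and the interchange of summations is justified by nonnegativity.
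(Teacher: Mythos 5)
Your proof is correct. The paper actually offers no proof of this proposition (it is listed among the statements declared ``elementary''), so there is nothing to compare against line by line; your argument --- confinement of leafwise deformations to saturated open sets, the compatibility $\widetilde{\LB}|_{U_n}=\widetilde{\LB_{U_n}}$ of the coherent extension (via the uniqueness in Proposition~\ref{p:coherentextension}, or directly from the local integral formula), and countable additivity of $\widetilde{\LB}$ over the partition for the reverse inequality --- is exactly the intended one and fills the gap completely.
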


\begin{defn}
Let $U\subset (X,\FF)$ be a measurable open subset. The {\em relative $\Lambda$-category\/} of $U$ is defined by
\[
\Cat(U,\FF,\LB)=\inf_{\UU}\sum_{V\in\UU}\tau_\LB(V)\;,
\]
where $\UU$ runs in the family of countable measurable open coverings of $U$.
\end{defn}

\begin{rem}
Let us emphasize that, in the above definition, $\tau_\LB(V)$ is defined by using measurable tangential homotopies deforming $V$ in the ambient space. Clearly, $\Cat(U,\FF,\LB)\leq\Cat(\FF_{U},\LB)$.
\end{rem}

\begin{prop}[Subadditivity of the relative \LB-category]\label{p:subaddtivity of the relative LB-category}
Let $\{U_i\}_{i\in\N}$ be a countable family of measurable open subsets of $X$. Then
\[
  \Cat\left(\bigcup_iU_i,\FF,\LB\right)\leq\sum_i\Cat(U_i,\FF,\LB)\;.
\]
\end{prop}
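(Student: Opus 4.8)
The plan is to argue directly from the definition of $\Cat(\,\cdot\,,\FF,\LB)$ as an infimum over \emph{countable} measurable open coverings, in close parallel with the proof of Proposition~\ref{p:subaddtivity of the relative category}. First I would dispose of the trivial case: if $\Cat(U_i,\FF,\LB)=\infty$ for some index $i$, then the right-hand side is $\infty$ and the inequality holds vacuously; so I would henceforth assume every term $\Cat(U_i,\FF,\LB)$ is finite.

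Next I would fix $\varepsilon>0$ together with positive reals $\varepsilon_i$ satisfying $\sum_i\varepsilon_i=\varepsilon$. For each $i\in\N$, using that $\Cat(U_i,\FF,\LB)$ is an infimum, I would choose a countable measurable open covering $\UU_i$ of $U_i$ with
\[
  \sum_{V\in\UU_i}\tau_\LB(V)\;\le\;\Cat(U_i,\FF,\LB)+\varepsilon_i\;.
\]
Here it is essential to recall the remark preceding the statement: each $\tau_\LB(V)$ is computed using tangential deformations of $V$ in the \emph{ambient} lamination, so the quantity $\tau_\LB(V)$ depends only on the set $V$ and not on which $U_i$ produced it; this is what will allow the coverings to be amalgamated without reinterpreting any of the terms.

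I would then set $\UU=\bigcup_i\UU_i$ and verify that it is an admissible covering in the infimum defining $\Cat\bigl(\bigcup_iU_i,\FF,\LB\bigr)$: it consists of measurable open sets, it covers $\bigcup_iU_i$ since each $U_i$ is already covered by $\UU_i\subset\UU$, and, being a countable union of countable families, it is countable. Since every $\tau_\LB(V)\ge 0$, discarding repetitions when passing to $\UU$ as a set can only decrease the total, so
\[
  \Cat\Bigl(\bigcup_iU_i,\FF,\LB\Bigr)\;\le\;\sum_{V\in\UU}\tau_\LB(V)\;\le\;\sum_i\sum_{V\in\UU_i}\tau_\LB(V)\;\le\;\sum_i\Cat(U_i,\FF,\LB)+\varepsilon\;.
\]
Letting $\varepsilon\to 0$ would then give the claim.

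The only genuinely delicate point is the countability of the amalgamated covering $\UU$. The whole definition of the $\LB$-category rests on summing $\tau_\LB$ along a \emph{countable} cover — the countability was imposed precisely so the invariant does not collapse to zero — and it is the stability of countability under countable unions that makes subadditivity hold for countably many sets rather than merely finitely many. The overlaps between distinct $\UU_i$, and the possibility of a set appearing in several of them, are harmless because $\tau_\LB\ge 0$: one simply retains each open set once, which does not increase the sum. I do not expect any further obstacle beyond bookkeeping.
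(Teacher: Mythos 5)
Your proof is correct: the paper states this proposition without proof (it is presented as elementary), and your argument---choosing for each $i$ a countable measurable open cover $\UU_i$ of $U_i$ within $\varepsilon_i$ of the infimum, amalgamating these into the countable cover $\bigcup_i\UU_i$ of $\bigcup_iU_i$, and using that $\tau_\LB(V)$ depends only on the set $V$ (deformations being taken in the ambient lamination) together with $\tau_\LB\geq 0$---is exactly the standard argument the paper intends. Nothing further is needed.
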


\section{Homotopy invariance of the $\Lambda$-category}

Let $\FF$ and $\GG$ be measurable laminations. An MT-homotopy equivalence $h$ from \FF\ to \GG\ induces a canonical
bijection between the sets of transverse invariant measures on \GG\ and \FF, which is defined as follows. Let
$T$ be a complete transversal of \FF. Obviously $h|_T$ has
countable fibers. By Proposition \ref{p:Lusin1}, $h(T)$ is a
transversal of \GG. In fact, there exists a countable measurable
partition, $T=\bigcup_i T_i$, so that each $h|_{T_i}$ is
injective. Define $h^*\LB(T)=\sum_i\LB(h(T_i))$. From now on, we any MT-homotopy equivalence between measurable laminations with transverse invariant measures is assumed to be compatible with the measures in the above sense.

\begin{lemma}\label{l:invariance1}
Let $(X,\FF,\LB)$ and $(Y,\GG,\Delta)$ be foliated measurable
spaces with transverse invariant measures, and let $h:(X,\LB)\to (Y,\Delta)$ be a
measurable homotopy equivalence.
Then, for all measurable set $K\subset X$ with \sg-compact intersections with
the leaves, $h(K)$ is measurable and
$\widetilde{\Delta}(h(K))\leq\widetilde{\LB}(K)$.
\end{lemma}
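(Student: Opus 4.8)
The plan is to reduce the whole statement to a single inequality at the level of transversals, namely $\Delta(h(T))\le\LB(T)$ for every transversal $T$ of $\FF$, and then to feed that inequality through the coherent-extension formula. First I would prove that $h(K)$ is measurable, arguing exactly as in Proposition~\ref{p:measurability}. Using the Kunugui--Novikov theorem (Theorem~\ref{t:Srivastava}) I localise, writing $K$ as a countable union of pieces carried by product charts $f_n\colon P_n\times T_n\to\FF$ whose $h$-images lie in product charts $g_n\colon P'_n\times T'_n\to\GG$, and I consider the graph sets
\[
  B_n=\{\,((x,t),(x',t'))\mid (x,t)\in f_n^{-1}(K),\ g_n^{-1}\circ h\circ f_n(x,t)=(x',t')\,\}\;.
\]
Endowing $T_n$ with a Polish topology as in Proposition~\ref{p:measurability}, the fibre of the second projection over $(x',t')$ is the intersection of $f_n^{-1}(K)$ with the preimage of a single point under $h\circ f_n$. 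As in that proof, such a preimage meets only countably many leaves and is leafwise closed, while the slices of $f_n^{-1}(K)$ are $\sigma$-compact because $K$ has $\sigma$-compact leaf intersections; hence the fibre is $\sigma$-compact. Kallman's Lemma (Lemma~\ref{l:Kallman}) then makes $\pi(B_n)=g_n^{-1}\circ h\circ f_n(f_n^{-1}(K))$ measurable, and taking the union over $n$ yields measurability of $h(K)$.

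The transversal inequality itself is short. If $T$ is a transversal of $\FF$, then $h(T)$ is a transversal of $\GG$ by Proposition~\ref{p:Lusin1}, and by the same proposition there is a countable measurable partition $T=\bigcup_iT_i$ with each $h|_{T_i}$ injective. By the compatibility of $h$ with the measures, $\LB(T)=\sum_i\Delta(h(T_i))$, so that
\[
  \Delta(h(T))=\Delta\Bigl(\bigcup_i h(T_i)\Bigr)\le\sum_i\Delta(h(T_i))=\LB(T)\;.
\]
Since $\widetilde{\LB}$ and $\widetilde{\Delta}$ restrict to $\LB$ and $\Delta$ on transversals, this already settles the lemma when $K$ itself is a transversal.

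For the general case I would exploit the coherent-extension formula $\widetilde{\LB}(B)=\int_T\#(B\cap(P\times\{t\}))\,d\LB(t)$ read in each chart. If $\widetilde{\LB}(K)=\infty$ there is nothing to prove, so assume it is finite. Then in every chart the transverse set $T_\infty$ of those $t$ with $\#(K\cap(P\times\{t\}))=\infty$ satisfies $\LB(T_\infty)=0$, and the corresponding portion of $K$ lies in $\str(T_\infty)$, which is null-transverse by property~(ii) of Proposition~\ref{p:coherentextension}. Its $h$-image lies in $\str(h(T_\infty))$, and since $\Delta(h(T_\infty))\le\LB(T_\infty)=0$ by the transversal inequality, property~(ii) applied to $\Delta$ gives $\widetilde{\Delta}(\str(h(T_\infty)))=0$; thus this portion contributes nothing to either side. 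Over the complement, $K$ meets every plaque in a finite set, hence every leaf in a countable set (countably many charts, countably many plaques per leaf), so after discarding the null-transverse portion above, $K$ is a transversal and the second paragraph applies.

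The step I expect to be the main obstacle is the measurability of $h(K)$: verifying that the fibres entering Kallman's Lemma are genuinely $\sigma$-compact, which is exactly where the hypothesis on $K\cap L$ is indispensable, since a general measurable map need not carry measurable sets to measurable sets. The measure estimate, by contrast, is governed entirely by the transversal inequality once the coherent-extension bookkeeping cleanly separates the ``thick'' part of $K$, absorbed into a null-transverse saturation, from its transversal part.
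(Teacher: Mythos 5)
Your proof is correct, but it reaches the measure inequality by a genuinely different route than the paper. On measurability of $h(K)$ the two arguments agree: the paper just invokes Proposition~\ref{p:measurability}, and your graph-plus-Kallman adaptation (Lemma~\ref{l:Kallman}) is exactly what that invocation hides — the key points you identify, that point preimages under $h$ lie in a single leaf (hence meet only countably many plaques) and that leafwise closed sets intersected with the $\sigma$-compact slices of $K$ stay $\sigma$-compact, are the right ones. For the inequality itself, however, the paper never reduces to transversals: using Proposition~\ref{p:Lusin1} and Theorem~\ref{t:Srivastava} it constructs a refined atlas $\WW=\{W_n\}$ with $h(W_n)$ contained in a chart $V_{k(n)}$ of $\GG$ and $h$ injective on the set of plaques, derives the pointwise counting bound $\#\bigl(h(K)\cap P^t\bigr)\le\#\bigl(K\cap(P\times\{t\})\bigr)$, integrates it against $\LB$ via the compatibility $\LB(D)=\Delta(h(D))$ on chart transversals, and finally sums over the disjointification $B_k=(K\cap W_k)\setminus(B_1\cup\dots\cup B_{k-1})$. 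You instead exploit the dichotomy built into the coherent extension: when $\widetilde{\LB}(K)<\infty$, the part of $K$ with infinite plaque counts lies in the saturation of a null transversal, whose $h$-image is again null-transverse by your transversal inequality $\Delta(h(T))\le\sum_i\Delta(h(T_i))=\LB(T)$ together with property (ii) of Proposition~\ref{p:coherentextension}; the remainder of $K$ is an honest transversal, where the inequality is immediate. Your route is shorter and avoids building the plaque-injective atlas, at the cost of a finite/infinite case split; the paper's route yields the stronger chart-level estimate $\widetilde{\Delta}(h(K\cap W_m))\le\widetilde{\LB}(K\cap W_m)$ uniformly, with no case distinction. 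Both rest on the same two facts: the measure compatibility of $h$, and that a homotopy equivalence sends distinct leaves into distinct leaves, so that images of transversals are transversals and $h$ restricted to a transversal has countable fibers. Two details worth making explicit in your write-up: the compatibility formula is stated in the paper only for complete transversals, so note that it extends to arbitrary transversals (refine two partitions by intersecting them and use $\sigma$-additivity of $\Delta$ on the disjoint images); and the null transversals $T_\infty$ must be collected over the countably many charts before saturating, using countable subadditivity of $\widetilde{\LB}$ and $\widetilde{\Delta}$.
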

\begin{proof}
The fact that $h(K)$ is measurable is a consequence of Proposition~\ref{p:measurability}. Notice that finite intersections of \sg-compact sets are \sg-compact.
Let $\UU=\{(U_n,\varphi_n)\}$ and
$\VV=\{(V_n,\psi_n)\}$ ($n\in\N$) be foliated measurable atlases for
$\FF$ and $\GG$, respectively. Observe that there exists a foliated
measurable atlas $\WW=\{(W_n,\phi_n)\}$ on $X$ satisfying
the following conditions:
\begin{enumerate}
\item[(a)] For each $n\in\N$, there exists some $k(n)\in\N$ such
that $h(W_n)\subset V_{k(n)}$.

\item[(b)] The map $h$ induces an injective map from the family of
plaques of $W_n$ to the family of plaques of $V_{k(n)}$; i.e., each plaque of $V_{k(n)}$ contains at most the image by
$h$ of one plaque of $W_n$.
\end{enumerate}
This atlas can be easily obtained by using
Theorem~~\ref{t:Srivastava} and Proposition \ref{p:Lusin1}.

The maps $\psi_{k(m)}\circ h\circ \phi_m^{-1}:P_m\times T_m\to
P'_{k(m)}\times T'_{k(m)}$ are injective in the set of plaques in the sense of
(b). Clearly, $\LB(D)=\Delta(h(D))$ for all Borel sets $D\subset
T_m$. For every $t\in T_m$, let $P^t$ be the plaque of $P'_{k(m)}\times T'_{k(m)}$ that
contains $\psi_{k(m)}\circ h\circ \phi_m^{-1}(P_m\times\{t\})$. We have
$$\# (h(K)\cap P^t)\leq\# (K\cap(P\times\{t\}))$$
for every $t\in T_m$. Hence
\begin{multline*}
  \widetilde{\Delta}(\psi_{k(m)}\circ h\circ \phi_m^{-1}(K))\\
    \begin{aligned}
      &=\int_{T'_{k(m)}}\# (\psi_{k(m)}\circ h\circ \phi_m^{-1}(K)\cap(P'\times\{t'\}))\,d\Delta(t')\\
      &=\int_{\psi_{k(m)}\circ h\circ \phi_m^{-1}(T_m)}\#( \psi_{k(m)}\circ h\circ \phi_m^{-1}(K)\cap(P\times\{t'\}))\;d\Delta(t')\\
      &=\int_{T_m}\# (\psi_{k(m)}\circ h\circ \phi_m^{-1}(K)\cap P^{t})\,d\Lambda(t)\\
      &\leq \int_{T_m}\# (K\cap(P\times\{t\}))\,d\Lambda(t)\\
      &=\widetilde{\LB}(K)\;.
    \end{aligned}
\end{multline*}
Then the lemma holds on each chart of $\WW$. Consider the family $\{B_k\}$ inductively defined by
$$
B_1=(K\cap W_1),\quad B_k=(K\cap W_k)\setminus (B_1\cup\dots\cup
B_{k-1})\quad  (k>1)\;.
$$
It is a Borel partition of $K$ whose elements are contained in the sets $W_k$
and have \sg-compact intersection with each leaf. Then
$$
\widetilde{\Delta}(h(K))=\widetilde{\Delta}\left(\bigcup_i h(B_i)\right)\leq\sum_i\widetilde{\Delta}(h(B_i))\\
\leq\sum_i\widetilde{\LB}(B_i)=\widetilde{\LB}(K)\;.\qed
$$
\renewcommand{\qed}{}
\end{proof}
\begin{prop}[The \LB-category is an MT-homotopy invariant]\label{p:invariance}
Let $(X,\FF,\LB)$ and $(Y,\GG,\Delta)$ be measurable homotopy
equivalent measurable laminations with transverse invariant measures. Then $\Cat(\FF,\LB)=\Cat(\GG,\Delta)$.
\end{prop}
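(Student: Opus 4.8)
The plan is to show that an MT-homotopy equivalence $h:(X,\FF,\LB)\to(Y,\GG,\Delta)$ produces a two-sided inequality $\Cat(\FF,\LB)\leq\Cat(\GG,\Delta)$ and $\Cat(\GG,\Delta)\leq\Cat(\FF,\LB)$, each obtained by transporting an optimal (or near-optimal) covering across $h$. By symmetry it suffices to prove one inequality, say $\Cat(\FF,\LB)\leq\Cat(\GG,\Delta)$, using the homotopy inverse $k:(Y,\GG,\Delta)\to(X,\FF,\LB)$ to pull coverings of $Y$ back to coverings of $X$. First I would fix $\varepsilon>0$ and choose a countable covering $\VV=\{V_n\}$ of $Y$ by measurable open sets with $\sum_n\tau_\Delta(V_n)\leq\Cat(\GG,\Delta)+\varepsilon$.

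The core step is to compare $\tau_\LB$ of the pulled-back sets with $\tau_\Delta(V_n)$. Let $U_n=k^{-1}(V_n)$; since $k$ is an MT-map these are measurable open sets covering $X$. For each $n$, pick a tangential deformation $G_n$ of $V_n$ in $(Y,\GG)$ realizing $\widetilde{\Delta}(G_n(V_n\times\{1\}))\leq\tau_\Delta(V_n)+\varepsilon/2^n$. I would then manufacture a tangential deformation $H_n$ of $U_n$ in $(X,\FF)$ whose time-$1$ image is controlled by $k(G_n(V_n\times\{1\}))$. The construction proceeds by concatenating three homotopies: first deform $U_n$ to $k(V_n)$ using a leafwise homotopy coming from $h\circ k\simeq\id_X$ (restricted and composed appropriately), then apply $k\circ G_n\circ(h\times\id)$ to push the deformation $G_n$ through $k$, landing the set inside $k$ of a transversal of $\GG$, which is a transversal of $\FF$ by Proposition~\ref{p:Lusin1}. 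Because all maps involved are MT-maps that are continuous on leaves, the concatenation is again a tangential deformation, and Proposition~\ref{p:measurability} guarantees measurability of its time-$1$ image.

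The quantitative heart is then Lemma~\ref{l:invariance1}: the time-$1$ image $H_n(U_n\times\{1\})$ is (up to a null-transverse set, as in Proposition~\ref{p:posit}) the image under $k$ of a set with $\sigma$-compact leaf intersections, so $\widetilde{\LB}(H_n(U_n\times\{1\}))\leq\widetilde{\Delta}(G_n(V_n\times\{1\}))\leq\tau_\Delta(V_n)+\varepsilon/2^n$ once we use the compatibility $k^*\Delta=\LB$ built into the hypothesis. Hence $\tau_\LB(U_n)\leq\tau_\Delta(V_n)+\varepsilon/2^n$, and summing over $n$ gives $\Cat(\FF,\LB)\leq\sum_n\tau_\LB(U_n)\leq\Cat(\GG,\Delta)+2\varepsilon$. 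Letting $\varepsilon\to0$ yields one inequality; the symmetric argument with the roles of $\FF$ and $\GG$ (and $h$, $k$) interchanged yields the reverse, so $\Cat(\FF,\LB)=\Cat(\GG,\Delta)$.

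The main obstacle I anticipate is not the bookkeeping of the inequality but verifying that the concatenated map $H_n$ is genuinely a \emph{tangential} deformation in the measurable sense, i.e. that it remains an MT-map and that its leafwise tracks stay within single leaves of $\FF$ after passing through $h$ and $k$. The homotopies realizing $h\circ k\simeq\id$ move points within leaves, so composing $G_n$ with $k$ and the homotopy-inverse tracks can, a priori, fail to be well-defined as a single deformation of $U_n$ unless one is careful that the intermediate map $h$ sends $U_n$ into the correct chart where $G_n$ is defined. Managing these domains — shrinking $U_n$ if necessary, and absorbing the resulting leftover into a null-transverse set via Proposition~\ref{p:posit} — together with checking that the hypothesis of Lemma~\ref{l:invariance1} ($\sigma$-compact leaf intersections) actually holds for the time-$1$ images, is where the real work lies.
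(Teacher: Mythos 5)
Your proposal is correct and follows essentially the same route as the paper: pull a covering of $Y$ back to $X$ through the equivalence, prepend the homotopy $\id_X\simeq k\circ h$ to $k\circ G_n\circ(h\times\id)$ to obtain a tangential deformation of each pulled-back set, and bound the coherent measure of its time-$1$ image by Lemma~\ref{l:invariance1}; the paper does exactly this, only proving the cleaner pointwise inequality $\tau_\LB(h^{-1}(U))\leq\tau_\Delta(U)$ for every deformation rather than using your $\varepsilon$-bookkeeping. The only blemishes are directional slips (the pulled-back covering should be $h^{-1}(V_n)$, not $k^{-1}(V_n)$, and the homotopy invoked is $k\circ h\simeq\id_X$, not $h\circ k\simeq\id_X$), which are harmless since the rest of your construction uses the maps in the correct directions.
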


\begin{proof}
Let $h:X\to Y$ be a measurable homotopy equivalence, and let $g$ be a
homotopy inverse of $h$. Let $\{U_n\}$ ($n\in\N$) be a covering of $Y$ by
measurable open sets. Then $\{h^{-1}(U_n)\}$ is a
covering of $X$ by measurable open sets. We will prove that
$\tau_{\LB}(h^{-1}(U_n))\leq\tau_\Delta(U_n)$ for all $n\in\N$.
Let $H^n$ be a measurable tangential deformation on each $U_n$,
and let $F$ be an MT-homotopy connecting the identity map and $g\circ h$.
Let
$$G=g\circ H\circ (f\times\id): h^{-1}(U)\times \R\to X\;.$$
Then $K:h^{-1}(U)\times \R\to X$, defined by
$$
K(x,t)=
\begin{cases}
F(x,2t) & \text{if $t\leq 1/2$} \\
G(x,2t-1) & \text{if $t\geq 1/2$\;,}
\end{cases}
$$
is a tangential deformation. Lemma~~\ref{l:invariance1} yields
$$\widetilde{\LB}(K(h^{-1}(U)\times\{1\}))=\widetilde{\LB}(g(H(U\times\{1\})))\leq\widetilde{\Delta}(H(U\times\{1\}))\;.$$
Hence $\tau_{\LB}(h^{-1}(U_n))\leq\tau_\Delta(U_n)$ for all
$n\in\N$. Therefore $\Cat(\FF,\LB)\leq\Cat(\GG,\Delta)$. The
inverse inequality is analogous.
\end{proof}

The above proposition has an obvious $C^r$ version.

\section{Case of laminations with compact leaves}\label{s:compactleaves}

In this section, we compute the \LB-category of (the measurable laminations underlying) a lamination $\FF$ with compact leaves on a Polish space $X$. With these conditions, there exists a countable filtration
$$\dots\subset E_\alpha\subset\dots\subset E_2\subset E_1\subset E_0=X\;,$$
such that each $E_\alpha$ is a closed saturated set, and
$E_{\alpha}\setminus E_{\alpha + 1}$ is dense in $E_\alpha$ and consists of leaves
with trivial holonomy on the foliated space $E_\alpha$. This family
is called the {\em Epstein filtration\/} of $X$ \cite{Epstein,Epstein2, Edwards-Millett-Sullivan}.

Obviously, each $E_\alpha\setminus E_{\alpha + 1}$ is a saturated measurable
open set (in the MT-structure) without holonomy. Hence, by
Proposition~\ref{p:catsubspaces},
\begin{equation}\label{compact leaves}
\Cat(\OO(\FF),\LB)=\sum_{\alpha}\Cat(\OO(\FF_{E_{\alpha-1}\setminus E_\alpha}),\LB_{E_{\alpha-1}\setminus E_\alpha})\;,
\end{equation}
where \OO\ denotes the underlying functor defined in the Remark~\ref{r:functorO}.

\begin{thm}[See e.g. \cite{Takesaki}]\label{t:takesaki}
Let $R$ be an equivalence relation on a Polish space $X$ such that
every equivalence class is a closed set in $X$. If the saturations
of open sets of $X$ are Borel, then there exists a Borel set
meeting every equivalence class in one point. If the saturations of
open sets are open, then there exists a Polish subspace meeting
every equivalence class in one point.
\end{thm}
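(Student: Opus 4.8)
The plan is to reformulate both hypotheses in terms of the Effros Borel structure on the space $\mathcal{F}(X)$ of nonempty closed subsets of $X$, generated by the sets $\{F : F\cap V\neq\emptyset\}$ for $V$ open, and then to run a selection argument. Write $\pi_R\colon X\to\mathcal{F}(X)$ for the class map $x\mapsto[x]$, which is well defined since the classes are closed and nonempty. The key observation is that, for a basic open set $V$, one has $\{x : [x]\cap V\neq\emptyset\}=\str(V)$. Hence the hypothesis that saturations of open sets are Borel says exactly that $\pi_R$ is a Borel map into $\mathcal{F}(X)$, while the stronger hypothesis that saturations of open sets are open says exactly that $\pi_R$ is lower semicontinuous (continuous into the lower Vietoris topology).

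For the first part I would apply the Kuratowski--Ryll-Nardzewski selection theorem to the identity map on $\mathcal{F}(X)$: since $X$ is Polish, there is a Borel selector $c\colon\mathcal{F}(X)\to X$ with $c(F)\in F$ for every $F$. The crucial point is that selecting at the level of $\mathcal{F}(X)$ repairs invariance for free: set $g=c\circ\pi_R\colon X\to X$. Then $g$ is Borel (a composition of Borel maps), it satisfies $g(x)\in[x]$, and it is $R$-invariant because $g(x)$ depends only on the closed set $[x]$. Consequently $B=\{x : g(x)=x\}=(g,\id)^{-1}(\Delta_X)$ is Borel, since the diagonal $\Delta_X$ is closed in $X\times X$; and $B$ meets each class exactly once, because $c([x])\in[x]\cap B$ while any $y\in[x]\cap B$ satisfies $y=g(y)=c([y])=c([x])$.

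For the second part the target is stronger: a \emph{Polish} subspace, i.e.\ (by Alexandrov's theorem) a $G_\delta$ transversal. Here I would exploit the lower semicontinuity of $\pi_R$ to run an explicit greedy selection. Fixing a complete metric and a countable basis $\{V_n\}$ of nonempty open sets, I would define $c(F)$ as the unique point of a nested intersection $\bigcap_k\overline{V_{n_k}}$, where at stage $k$ the index $n_k$ is the least $n$ with $\operatorname{diam}V_n<2^{-k}$ and $V_n\cap F_{k-1}\neq\emptyset$, and $F_k=F\cap\overline{V_{n_k}}$; completeness and shrinking diameters force the intersection to a single point. Lower semicontinuity makes each condition $[x]\cap V_n\neq\emptyset$ an \emph{open} condition on $x$, since that set is $\str(V_n)$, so the entire selection is governed by the open saturated sets $\str(V_n)$, and the aim is to realize the transversal as an intersection of open sets.

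The main obstacle is precisely this upgrade from Borel to $G_\delta$/Polish. One cannot hope for a \emph{closed} transversal (equivalently, a continuous invariant selector) in general: the values of $\pi_R$ are non-convex closed sets, so Michael's theorem does not apply and the quotient $X/R$ need not even be Hausdorff. The delicate point is that the greedy index $n_k(x)$ jumps across the boundaries of the locally closed saturated pieces $\str(V_n)\setminus\bigcup_{m<n}\str(V_m)$, which a priori yields only a Baire class one selector and an $F_{\sigma\delta}$ fixed-point set. The real work is to organize the refinement so that these jumps are absorbed into a genuine $G_\delta$: one drives the leafwise diameter to zero while keeping the partial choices inside open saturated neighbourhoods of shrinking diameter, and takes the transversal to be the intersection of the resulting open ``first-choice'' regions. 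I expect this bookkeeping --- converging within each class to a single point through nested \emph{open} saturated sets --- to be the technical heart of the argument.
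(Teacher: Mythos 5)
The paper offers no proof of this theorem at all --- it is quoted from the literature (``See e.g.\ \cite{Takesaki}'') --- so your attempt can only be judged on its own merits. Your first half is correct and complete: the hypothesis that saturations of open sets are Borel is exactly Borelness of the class map $x\mapsto[x]$ into the Effros Borel space $\mathcal{F}(X)$ (a countable basis suffices to generate that structure), the Kuratowski--Ryll-Nardzewski theorem provides a measurable selector $c$, and the fixed-point set $B=\{x: c([x])=x\}$ of the invariant Borel selector $g=c\circ\pi_R$ is a Borel set meeting each class exactly once. That argument is sound as written.

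The second half has a genuine gap, which you only partly acknowledge, and your scheme as written has a concrete defect you do not flag: from stage two on, the condition ``$V_n\cap F_{k-1}\neq\emptyset$'' asks whether $x\in\str\bigl(V_n\cap\overline{V_{n_1}}\cap\dots\cap\overline{V_{n_{k-1}}}\bigr)$, the saturation of an open-intersect-closed set; the hypothesis controls only saturations of \emph{open} sets, so these conditions are not known to be open, nor even Borel. Both problems are fixable, but the fix is precisely the content you left open. First, never intersect the class with closures: let $n_k(x)$ be the least $n$ with $\operatorname{diam}V_n<2^{-k}$, $\overline{V_n}\subset V_{n_{k-1}(x)}$ and $[x]\cap V_n\neq\emptyset$ (such $n$ exists since $X$ is metric and $[x]\cap V_{n_{k-1}}\neq\emptyset$); then every condition on $x$ has the form $x\in\str(V_m)$, an open set, while Cantor's theorem applied to the nested nonempty closed sets $[x]\cap\overline{V_{n_k}}$ still produces the invariant selector $c$. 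Second, the $G_\delta$ upgrade: for each $k$ the sets $H_s=\{x:(n_1(x),\dots,n_k(x))=s\}$, $s=(m_1,\dots,m_k)$, form a countable partition of $X$ into saturated finite Boolean combinations of the open sets $\str(V_m)$, and the transversal is $T=\bigcap_k\bigcup_s\bigl(H_s\cap V_{m_k}\bigr)$, which meets each class exactly in $\{c([x])\}$ because histories are class-invariant. The complement of the $k$-th level is $\bigcup_s\bigl(H_s\setminus V_{m_k}\bigr)$, a countable union of locally closed sets, hence $F_\sigma$, because in a metrizable space every open set --- and therefore every locally closed set --- is $F_\sigma$. So each level is $G_\delta$, $T$ is $G_\delta$, and $T$ is Polish by Alexandrov's theorem. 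Your worry that the greedy indices yield only a Baire class one selector and an $F_{\sigma\delta}$ set is resolved not by bookkeeping of diameters but by these two observations (closure \emph{inclusions} instead of closure \emph{intersections}, and locally closed $\Rightarrow F_\sigma$), which constitute the actual technical heart of the second statement.
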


\begin{cor}\label{p:region}
Let $(X,\FF)$ be a lamination with all leaves compact and let
$T$ be a complete transversal of \FF. Then there exists a Polish
subspace contained in $T$ meeting every leaf in one point.
\end{cor}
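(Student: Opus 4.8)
The plan is to deduce the statement from the second assertion of Theorem~\ref{t:takesaki}, applied to the transversal $T$ itself rather than to $X$, since the selector we seek is required to lie inside $T$. On $T$ I consider the equivalence relation $R$ defined by ``lying in the same leaf of $\FF$'', whose classes are exactly the sets $L\cap T$ as $L$ ranges over the leaves. Because $T$ is a \emph{complete} transversal, $R$ has a nonempty class on every leaf, so a subset of $T$ meeting each $R$-class in exactly one point is precisely a subset of $T$ meeting every leaf once. Thus it suffices to produce a one-point Borel selector for $R$ that is moreover a Polish subspace, which is what the second conclusion of Theorem~\ref{t:takesaki} delivers once its hypotheses are checked for $(T,R)$.

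Next I would verify those two hypotheses. The closedness of the classes is the easy half: each leaf $L$ is compact, hence closed in the Polish space $X$, so $L\cap T$ is the trace on $T$ of a closed set and is therefore closed in $T$ for the subspace topology; using a regular atlas one sees in addition that $L$ meets only finitely many plaques of compact closure, so the classes are small. The substantive half is the openness of saturations of open sets, which is exactly the hypothesis that upgrades the selector from merely Borel to Polish. Here I would use compactness of the leaves together with the local structure of the lamination: around a compact leaf the holonomy pseudogroup acts on a transversal by local homeomorphisms, so for an open $V\subset T$ the set $\str(V)\cap T$ is locally a finite union of holonomy images of opens and hence open. Granting both hypotheses, Theorem~\ref{t:takesaki} yields a Polish subspace of $T$ meeting every class of $R$, i.e.\ every leaf of $\FF$, in exactly one point.

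The main obstacle is precisely this openness of saturations, because a compact leaf may carry nontrivial holonomy, and then the ``local homeomorphism'' argument collapses; indeed, for general laminations with all leaves compact (the Epstein examples) the saturation of an open set need not be open, and one cannot simultaneously demand that $T$ be Polish, that the classes stay closed, and that saturations be open. To resolve this I would lean on the Epstein filtration $\dots\subset E_\alpha\subset\dots\subset E_0=X$ already introduced: on each stratum $E_\alpha\setminus E_{\alpha+1}$ the leaves have trivial holonomy, so Reeb-type local stability gives a genuine product structure, whence the leaf relation is étale and saturations of opens are open. I would run the argument of the previous paragraph on each such stratum (a saturated measurable open piece), and then reassemble, using Proposition~\ref{p:Lusin1} and Theorem~\ref{t:Srivastava} to keep the pointwise selection Borel across the countably many strata and to arrange the chosen points into a single Polish subspace contained in $T$. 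Checking that this reassembly really produces a \emph{Polish} selector, and that it remains inside the original $T$, is the delicate point on which I expect to spend the most effort.
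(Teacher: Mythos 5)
Your first two paragraphs are essentially the paper's own (implicit) proof of Corollary~\ref{p:region}: apply the second assertion of Theorem~\ref{t:takesaki} to $T$ with the equivalence relation given by the orbits of the holonomy pseudogroup $\Gamma$ on $T$; the classes $L\cap T$ are closed in $T$ because compact leaves are closed in the Hausdorff space $X$, and saturations of open subsets of $T$ are open because they are unions of images of open sets under elements of $\Gamma$, which are local homeomorphisms of $T$. Had you stopped there (dropping the irrelevant appeals to compactness and to finiteness of the unions in the openness argument), the proof would be complete and would coincide with the paper's.

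Your third paragraph, however, manufactures a spurious obstacle and then replaces the correct argument by a gapped one. The claim that for Epstein-type examples ``the saturation of an open set need not be open'' is false: nontrivial holonomy means the holonomy \emph{germs} are nontrivial, not that they cease to be germs of homeomorphisms, so the $R$-saturation of an open $V\subset T$ is still $\bigcup_{h\in\Gamma} h(V\cap\dom h)$, a union of open sets, hence open. Equivalently, in \emph{any} foliated space the saturation of an open set is open (the quotient map onto the leaf space is always open); compactness of the leaves is needed only for the closedness of the classes, and it is the Hausdorffness of the leaf space, not the openness of saturations, that fails in the Epstein examples. Moreover, the Epstein-filtration repair you substitute has a genuine hole exactly where you flag it: the strata $E_\alpha\setminus E_{\alpha+1}$ are only locally closed, and a countable union of Polish subspaces of $T$ (one selector per stratum) need not be Polish, since the $G_\delta$ property is not preserved by countable unions (e.g. $\Q\subset\R$ is a countable union of points). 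So the ``reassembly'' step is not justified, and it is a difficulty created entirely by abandoning the direct argument. The fix is to delete the third paragraph and keep the straightforward application of Theorem~\ref{t:takesaki} to $(T,\Gamma)$.
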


Let $\Gamma$ be the holonomy pseudogroup of \FF\ on $T$. Let $Q\subset T$
be a Polish subspace satisfying the statement of Theorem~\ref{t:takesaki} for the equivalence relation
defined by the orbits of $\Gamma$ on $T$. There exists a bijective map
$\pi:Q\to T/\Gamma\equiv X/\FF$ induced by the projection to the leaf space. This map is measurable with respect to the Borel \sg-algebra of $X/\FF$. By using the Epstein filtration of $(X,\FF)$, it is easy to see that $\pi$ is a Borel isomorphism, even when $X/\FF$ is not Hausdorff.
If \LB\ is a transverse invariant measure, then it induces a
measure on $Q$ since it is a Borel transversal. Hence \LB\ induces
a measure $\LB_\FF$ on $X/\FF$ via the Borel isomorphism
$\pi$. The measure $\LB_\FF$ is independent of the choice of the
Polish (Borel) sets $R$ and $T$, since all of them are equivalent by a
measurable holonomy map. By using the Epstein filtration, it easily follows that the {\em category map\/}, $\Cat:X/\FF\to \N\cup\{\infty\}$, assigning to each leaf its category, is measurable.

By~\eqref{compact leaves}, we can assume that the leaves have trivial holonomy groups to compute $\Cat(\OO(\FF),\LB)$. Moreover $X$ has a countable number of connected components, and the leaves on each connected component are homeomorphic since the holonomy groups are trivial. So, in general, $\OO(\FF)$ is MT-isomorphic to the disjoint union of
product spaces $L_n\times R_n$, where $L_n$ are the generic leaves
in each connected component and $R_n$ are Borel sets meeting
leaves in one point in these components. By
Propositions~\ref{p:products} and~\ref{p:catsubspaces},
we obtain the following corollary.

\begin{cor}\label{c:measurablecompactcategory}
Let $(X,\FF,\LB)$ be a foliated space with compact leaves endowed with
a transverse invariant measure. Then
$$\Cat(\OO(\FF),\LB)=\int_{X/\FF}\Cat(L)\;d\LB_\FF(L)\;.$$
\end{cor}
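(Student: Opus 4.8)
The plan is to reduce the computation to the already-established product formula. By equation~\eqref{compact leaves}, obtained from the Epstein filtration and Proposition~\ref{p:catsubspaces}, we may restrict attention to the saturated pieces $E_{\alpha-1}\setminus E_\alpha$ on which all leaves have trivial holonomy. On each such piece the leaves are mutually homeomorphic within a connected component, so $\OO(\FF)$ is MT-isomorphic to a disjoint (countable) union of product laminations $L_n\times R_n$, where $L_n$ is the generic leaf of the $n$-th connected component and $R_n\subset Q$ is a Borel set meeting each of those leaves in exactly one point.

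First I would apply Proposition~\ref{p:products} to each product piece: since $L_n$ is a compact manifold and $R_n$ a standard Borel space carrying the induced measure $\LB_\FF$, we have $\Cat(L_n\times R_n,\LB_{R_n})=\Cat(L_n)\cdot\LB_\FF(R_n)$. Next, invoking the additivity part of Proposition~\ref{p:catsubspaces} over the countable saturated partition into these product pieces, the total $\LB$-category is the sum $\sum_n\Cat(L_n)\cdot\LB_\FF(R_n)$. The crucial point is to recognize this sum as an integral of the category map over the leaf space. Because all leaves in the piece corresponding to $R_n$ are homeomorphic to $L_n$, the category map $\Cat:X/\FF\to\N\cup\{\infty\}$ is constant, equal to $\Cat(L_n)$, on the image of $R_n$ under the Borel isomorphism $\pi:Q\to X/\FF$. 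Therefore $\sum_n\Cat(L_n)\cdot\LB_\FF(R_n)$ is exactly the integral of the simple (countably-valued) measurable function $\Cat(L)$ against the pushed-forward measure $\LB_\FF$, giving
\[
  \Cat(\OO(\FF),\LB)=\int_{X/\FF}\Cat(L)\,d\LB_\FF(L)\;.
\]

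The measurability of the category map, already asserted in the text via the Epstein filtration, is what licenses writing the integral in the first place, and it guarantees that the rearrangement of the sum as a Lebesgue integral is valid even when the leaf space fails to be Hausdorff. I expect the main obstacle to be bookkeeping rather than conceptual: one must verify that the countable decomposition into product pieces is genuinely a measurable saturated partition of $X$ (so that the equality case of Proposition~\ref{p:catsubspaces} applies, not merely the inequality), and that the measure $\LB_\FF$ induced on $X/\FF$ via $\pi$ assigns to each block precisely $\LB_\FF(R_n)$, independently of the choices of Polish transversal and section. These independence and compatibility facts have all been prepared in the discussion preceding the corollary, so the proof is essentially an assembly of Propositions~\ref{p:products} and~\ref{p:catsubspaces} together with the identification of $\LB_\FF$; the only care needed is to handle the possibility $\Cat(L_n)=\infty$ or $\LB_\FF(R_n)=\infty$, where both sides are infinite and the integral formula still holds by the usual conventions for integration of nonnegative extended-real-valued functions.
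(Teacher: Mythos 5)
Your proposal is correct and follows essentially the same route as the paper: reduction via the Epstein filtration and equation~\eqref{compact leaves}, MT-isomorphism with a countable disjoint union of products $L_n\times R_n$ on the trivial-holonomy pieces, and then Propositions~\ref{p:products} and~\ref{p:catsubspaces} to identify the resulting sum with the integral of the category map against $\LB_\FF$. The extra bookkeeping you flag (measurability of $\Cat$, independence of choices, the equality case of the partition additivity, and the infinite-value conventions) is precisely what the paper's preceding discussion prepares, so no new ideas are needed.
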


\begin{rem}
By Proposition~\ref{p:nullset}, this Corollary~\ref{c:measurablecompactcategory} applies to the case of a measurable lamination with a transverse invariant measure supported on a countable number of compact leaves.
\end{rem}

By using the same decomposition, it is easy to check that the measurable tangential category is exactly the maximum of the LS-category of the leaves, $\Cat(\OO(\FF))=\max\{\Cat(L)\mid L\in\FF\}$.

\section{Dimensional upper bound}

It is known that $\Cat(M)\leq \dim M + 1$ for any
manifold $M$ \cite{James}. This result was
adapted to the tangential category of a $C^2$ foliation $\FF$ on closed manifold by E.~Vogt
and W.~Singhoff \cite{Vogt-Singhof}, obtaining $\Cat(\FF)\leq \dim\FF +1$.
We show an adaptation to our measurable setting.

Suppose that there exists a complete Riemannian metric $g$ on the
leaves of a $C^1$ measurable lamination which varies in a
measurable way on the ambient space. Clearly, the exponential map
is measurable by the assumptions on $g$.

\begin{lemma}\label{l:injectivity radius}
The function $i:X\to \R^+\cup\{\infty\}$, defined as the
injectivity radius of the exponential map at each point, is
measurable.
\end{lemma}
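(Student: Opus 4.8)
The plan is to reduce the measurability of $i$ to that of a countable family of superlevel sets, and then to recognise each such set as built from the (measurable) exponential map by operations that preserve measurability, using Kallman's Lemma (Lemma~\ref{l:Kallman}) to absorb the quantifiers over the fibre directions.

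First, since $\R^+\cup\{\infty\}$ has a countable base, it suffices to prove that $\{x\in X : i(x)>\rho\}$ is measurable for every rational $\rho>0$; the value $i(x)=\infty$ is then recovered as $\bigcap_{\rho}\{i>\rho\}$. I would use the standard Riemannian characterization: $i(x)>\rho$ if and only if the restriction of $\exp_x$ to the closed ball $\overline{B}(0,\rho)\subset T_xL$ is a diffeomorphism onto its image, i.e. it is injective and its leafwise differential $d(\exp_x)_v$ is nonsingular for every $v$ with $|v|\le\rho$. Writing $\{i>\rho\}=\bigcup_{\rho'>\rho}\{x:\exp_x|_{\overline{B}(0,\rho')}\text{ is a diffeomorphism onto its image}\}$, a countable union over rational $\rho'$, reduces everything to the injectivity condition and the nonsingularity condition for a fixed radius.

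Next I would localize in a chart $U\cong B\times T$ of the regular atlas. Because the lamination is $C^1$, the leafwise tangent bundle is measurably trivialized over $U$, so vectors $v\in T_xL$ with $|v|\le\rho$ are parametrized measurably by $x$ together with a point of a fixed compact ball of $\R^n$, and the assignments $(x,v)\mapsto\exp_x(v)$ and $(x,v)\mapsto J(x,v):=\det d(\exp_x)_v$ are leafwise continuous and measurable in the ambient variable. Injectivity fails at $x$ exactly when $x$ lies in the projection to $X$ of the set $\{(x,v,w): |v|,|w|\le\rho,\ v\ne w,\ \exp_x v=\exp_x w\}$; splitting off the open condition $v\ne w$ as a countable union over $\{|v-w|\ge 1/k\}$ makes the fibre over each $x$ a closed subset of a compact ball, hence compact, so Lemma~\ref{l:Kallman} shows each projection, and therefore the whole non-injectivity set, is measurable. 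Likewise, the set where the differential degenerates is the projection of $\{(x,v):|v|\le\rho,\ J(x,v)=0\}$, whose fibres are again compact by the leafwise continuity of $J$, so Lemma~\ref{l:Kallman} applies once more. Intersecting the complements of these two measurable sets, and then taking the countable union over the charts of the atlas and over rational $\rho'>\rho$, yields the measurability of $\{i>\rho\}$.

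I expect the genuine obstacle to be the measurability of the leafwise Jacobian $J(x,v)$ in the ambient variable: the hypotheses only grant that $\exp$ itself is measurable (and leafwise differentiable, by the leafwise regularity of $g$), so one must verify that $d\exp$, obtained by leafwise differentiation of a map that is merely measurable transversally, remains measurable on $X$, which is where the measurable dependence of $g$ on the transverse parameter must be used with care. By contrast, the quantifier ``for all $v$ in the ball'', usually the source of trouble when passing from the leaves to the ambient space, is handled cleanly by the compactness of the fibres and Lemma~\ref{l:Kallman}; and the reduction of the uncountable injectivity condition to a projection is precisely the kind of statement that Lemma~\ref{l:Kallman}, together with the Kunugui--Novikov Theorem~\ref{t:Srivastava}, is designed to control.
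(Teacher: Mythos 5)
Your proposal is correct in substance, but it is a genuinely different proof from the one in the paper. The paper never analyzes $\exp_x$ as a map to be tested for injectivity; instead it partitions each chart transversally into pieces $T^n_i$ on which the leafwise metric is $2^{-n}$-close in the sup norm over compact plaque pieces, exhausts each leaf by compacta $K^n_i$ built from chains of charts, and then obtains measurability of $\{i>r\}$ from the lower semicontinuity of the injectivity radius jointly in the point \emph{and} the metric: a point with $i(x)>r$ sits inside some product $K^n_i\times T^n_i$ all of whose points also have injectivity radius $>r$. You replace this geometric semicontinuity argument by descriptive set theory: you characterize $\{i>\rho\}$ via injectivity and nonsingularity of $\exp_x$ on closed balls, write the two failure sets as projections, along the Polish fibre variables, of measurable sets with compact fibres, and invoke Lemma~\ref{l:Kallman}. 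This is legitimate: inside a chart $U\cong B\times T$ the ambient factor $U$ is a standard Borel space (so it can play the role of the standard factor in Lemma~\ref{l:Kallman} while the ball of vectors plays the Polish factor), your fibres are closed and bounded --- closedness using that leaves, carrying a complete Riemannian metric, are Hausdorff --- and the splitting of the open condition $v\neq w$ into the countable union over $|v-w|\geq 1/k$ does produce compact fibres; your compactness argument also shows the union over rational $\rho'>\rho$ correctly recovers the strict superlevel set. What your route buys is robustness: it entirely avoids the claim that the injectivity radius is lower semicontinuous under merely $C^0$-variation of the metric, which is the least detailed point of the paper's proof. What it costs is the joint measurability of $(x,v)\mapsto\exp_x v$ and of the leafwise differential $(x,v)\mapsto d(\exp_x)_v$, which you correctly single out as the real obstacle; both are available at the paper's own level of rigor --- the paper simply asserts measurability of the exponential map from that of $g$, and the differential can be obtained as a pointwise limit of measurable difference quotients computed in chart coordinates, the same principle the paper invokes in Section~\ref{s:critical measurable} for differentials of functions, combined with the fact that pointwise limits of measurable maps are measurable.
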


\begin{proof}
Let $\UU=\{U_l\}_{l\in\N}$ be a regular measurable atlas, where
$U_l\cong B_l\times T_l$. Clearly, $i$ is measurable on the leaves
since the injectivity radius is a lower semicontinuous map.
Consider the Borel \sg-algebra associated to the compact-open
topology in $C(B_l,\R^{m^2})$. Then the Riemannian metric $g$ on
the chart $U_l$ can be considered as a measurable map $g:T \to
C(B_l,\R^{m^2})$, where $g(t)(x)$ is the matrix of coefficients of
$g$ at $(x,t)$ with respect to the canonical frame of $T\R^m$. In
fact, we can work with the closure $\overline{B_l}\times T$. Let
$\{\{U(p^n_i)\}_{i\in\N}\}_{n\in\N}$ be a sequence of open
coverings of $C(\overline{B_l},\R^{m^2})$, where $p^n_i\in
C(\overline{B_l},\R^{n^2})$, and $U(p^n_i)$ consists of the
functions $f\in C(\overline{B_l},\R^{m^2})$ such that $\|f -
p^n_{i}\|<2^{-n}$, using the norm of the maximum absolute value of
the coefficients in $\R^{m^2}$. Therefore
$T^n_i=\{g^{-1}(U(p^n_i))\}_{i\in\N}$ is a covering of $T$ by
measurable sets. By definition, for $t,t'\in T^n_i$,
$\|g(x,t)-g(x,t')\|<2^{-n}$ for all $x\in\overline{B_l}$.

Let $U_{l_1},\dots,U_{l_N}$ be a finite sequence of measurable
charts (it is possible that $U_{l_i}=U_{l_j}$ for some $i\neq j$)
and let $U=\bigcup_{j=1}^N U_{l_j}$. Then $U$ can be decomposed
into a countable family of product foliations $\FF_n$ such that
$\FF_n\cap U_{l_j}$ is saturated in $U_{l_j}$ for each $j$. Of
course, we can do the previous argument on each product foliation
of the given decomposition.

Since the family of finite collections of measurable charts is
countable, we have proved that the lamination $\FF$ is a countable
union of products $\{K^n_i\times T^n_i\}_{i\in\N}$, where each
$K^n_i$ is compact, $\|g(x,t)-g(x,t')\|<2^{-n}$ for all $x\in
K^n_i$ and $t,t'\in T^n_i$, and, for each $x\in\FF$, there exists
an expansive sequence $K^n_{i^n_x}\subset
K^{n+1}_{i^{n+1}_{x}}\subset\cdots$ meeting $x$ such that
$\bigcup_{n\in\N} K^n_{i^n_x}=L_x$, where $L_x$ is the leaf through
$x$. This final property is a consequence of the fact that these
$K^n_i$ are finite unions of closures of plaques in chains
of charts associated to each finite sequence of charts in $\UU$.

Finally, the injectivity radius map is measurable by the
lower semicontinuity relative to the variation of the metric. For
$n$ big enough, the Riemannian metric on the plaques of the
products $K^n_i\times T^n_i$ is so close to each other as we want.
Let $x\in X$ be a point where the injectivity radius is greater
than $r\in\R$. By the lower semicontinuity, choose $n$ such that
$x\simeq (x_0,t_0)\in \intr(K^n_i)\times T^n_i$ and the
injectivity radius of each point $(y,t)$, $(y,t)\in B_x\times
T^n_i$, is also greater than $r$; where $B_x\subset K^n_i$ is an
open neighborhood of $x_0$. Being the sequences of products
$\{K^n_i\times T^n_i\}_{n,i\in\N}$ a countable collection and
leaves second countable, we deduce that $i^{-1}(r,\infty]$ is
measurable for all $r\in\R$.
\end{proof}

\begin{defn}[Measurable triangulation \cite{Bermudez}]\label{d:measurabletriangulation}
Let $\bigtriangleup^n$ denote the canonical
$n$-simplex. A {\em measurable triangulation\/} is a family of triangulations on the leaves,
$\{\TT_L\}_{L\in\FF}$, which is {\em measurable\/} in the following sense:
  \begin{itemize}

    \item the sets of barycenters of $n$-simplices, $\BB^n$, are transversals of $\FF$; and

    \item the maps $\sg^n:\bigtriangleup^n\times\BB^n\to X$ are measurable, where $\sg^n(p,\cdot):\bigtriangleup^n\to L_p$ is the simplex of $\TT_{L_p}$ with barycenter $p$.

  \end{itemize}
A measurable triangulation is of
class $C^m$ if the functions $\sg^n_p$ are $C^m$.
\end{defn}

We work in this
section with $C^1$ measurable triangulations.

Let \TT\ be a measurable triangulation. Let $\TT^n$ denote the family
of $n$-faces of $\TT$ (the $n$-simplices of \TT\ without their boundaries).

\begin{prop}\label{p:tubeisolatedtransversal}
Let $T$ be an isolated transversal. There exists a measurable open
neighborhood $U(T)$ of $T$ such that the closures of its connected
components are disjoint and contain only one point of $T$. In
fact, $U(T)$ can be contracted to $T$ in a measurable way.
\end{prop}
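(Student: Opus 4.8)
The plan is to thicken $T$ into a disjoint union of small leafwise geodesic balls, one around each point of $T$, and then contract each ball radially to its center. Working in the Riemannian setting of this section, for $p\in T$ let $\exp_p$ denote the leafwise exponential map, which is measurable by the standing assumptions, and let $i(p)$ be the injectivity radius at $p$, which is measurable by Lemma~\ref{l:injectivity radius}. Since $T$ is an isolated transversal, the set $T\cap L_p$ is discrete in the leaf $L_p$ for every $p$, so the separation function
\[
  \rho(p)=\infim\{\,d_{L_p}(p,p')\mid p'\in T\cap L_p,\ p'\neq p\,\}
\]
is strictly positive (with $\rho(p)=\infty$ when $p$ is the only point of $T$ on its leaf). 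I would then set $r(p)=\tfrac13\minim\{i(p),\rho(p)\}$ and define
\[
  U(T)=\bigcup_{p\in T}B\bigl(p,r(p)\bigr),
\]
the union of the open geodesic balls of radius $r(p)$ centered at the points of $T$.

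With this choice the geometry is immediate: since $r(p)<i(p)$, each $\exp_p$ is a diffeomorphism on the ball of radius $r(p)$, so $B(p,r(p))$ is an embedded disc; and if $p\neq p'$ lie on a common leaf, then $r(p)+r(p')\le\tfrac23 d_{L_p}(p,p')<d_{L_p}(p,p')$, whence the closed balls $\overline{B(p,r(p))}$ and $\overline{B(p',r(p'))}$ are disjoint. Balls centered on different leaves are automatically disjoint, so the open balls are exactly the connected components of $U(T)$, each closed component meets $T$ only in its center, and the radial retraction $H(x,s)=\exp_{c(x)}\bigl((1-s)\exp_{c(x)}^{-1}(x)\bigr)$ --- where $c(x)\in T$ is the center of the component containing $x$ --- is a leafwise smooth deformation of $U(T)$ onto $T$ that contracts each ball to its center. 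This establishes every assertion once measurability is verified.

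The real content, and the step I expect to be the main obstacle, is the measurability bookkeeping. First I would show that $\rho$ is measurable: the leaf-distance function is measurable on the set of same-leaf pairs $\{(p,p')\in T\times T\mid p'\in L_p\}$ --- which can be seen by approximating $d_{L_p}$ from chains of plaques in a regular atlas, using the measurable dependence of $g$ --- and then $\rho(p)$ is an infimum over the countable fibers $T\cap L_p$, which is measurable after the Lusin decomposition $T=\bigcup_i T_i$ of Proposition~\ref{p:Lusin1} into pieces meeting each leaf at most once. Granting measurability of $r$, the set $U(T)$ is measurable by the argument of Proposition~\ref{p:measurability} (equivalently Lemma~\ref{l:Kallman}), since it is the image of the measurable set $\{(p,v)\mid p\in T,\ |v|<r(p)\}$ under the measurable leafwise map $(p,v)\mapsto\exp_p(v)$, whose intersections with the plaques are $\sg$-compact. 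The same fiber structure makes the center assignment $c\colon U(T)\to T$ a measurable section, so that $H$ is an MT-map. Thus the difficulty is entirely descriptive-set-theoretic: reducing the distance and center functions to the Kunugui--Novikov, Kallman and Lusin machinery of the earlier sections, rather than in the elementary Riemannian estimates.
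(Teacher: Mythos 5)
Your proposal follows essentially the same route as the paper's proof: the paper likewise defines the leafwise separation function $h(p)=\inf\{\,d_g(p,p')\mid p'\in(T\cap L_p)\setminus\{p\}\,\}$, sets $U(T)=\bigcup_{p\in T}B^g\bigl(p,\min\{h(p),i(p)\}/2\bigr)$, and contracts radially to $T$ via the exponential map, with measurability coming from the measurability of $d_g$ and of the injectivity radius. Your factor $1/3$ in place of the paper's $1/2$ and your more explicit descriptive-set-theoretic bookkeeping are minor refinements of the same argument (the $1/3$ does guarantee disjointness of the \emph{closures} more cleanly than $1/2$), not a different approach.
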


\begin{proof}
Since $T$ is isolated and Borel, the function
$h:T\to\R\cup\{\infty\}$, defined by
$$
h(p)=\infim\{\,d_g(p,p')\mid p'\in(T\cap L_p)\setminus\{p\}\,\}\;,
$$
where $L_p$ denotes the leaf meeting $p$ and $d_g$
is the distance map on the leaves induced by the metric $g$, is measurable (by the measurability of $d_g$) and positive. The set
$h^{-1}(\infty)$ is measurable. Redefine $h$ in this set to be identically equal to $1$, and hence $0<h<\infty$. Now, let
$$
U(T)=\bigcup_{p\in T}B^g(p,\minim\{h(p),i(p)\}/2)\;,
$$
where $B^g(p,\varepsilon)$ is the $d_g$-ball in $L_p$
with center on $p$ and radius $\varepsilon$. This set is
measurable since $h$ and $i$ are measurable. Obviously, the
connected components are the balls
$B^g(p,\minim\{h(p),i(p)\}/{2})$ and satisfy the required
conditions. A measurable contraction to $T$ is given by the radial contraction on the tangent
space via the exponential map.
\end{proof}

\begin{defn}\label{d:comphomot}
Let $H:U\times \R\to X$ and $G:V\times \R\to X$ be tangential
deformations such that $H(U\times\{1\})\subset V$. Let
$H*G:U\times \R\to X$ be the tangential deformation defined by
$$H*G(x,t)=
\begin{cases}
H(x,2t) & \text{if $t\leq \frac{1}{2}$}\\
G(H(x,1),2t-1)& \text{if $\frac{1}{2}\leq t$.}
\end{cases}
$$
\end{defn}


\begin{lemma}\label{l:homotopytransversal}
Let $T$ be a standard Borel space, let $\R^m\times T$ be endowed
with the usual MT-structure and let $\pi:\R^m\times T\to T$ be the
canonical projection. Let $S$ be a transversal that meets each
plaque of $\R^n\times T$ at most in one point. Then there exists a
measurable homotopy $H:S\times \R\to \R^m\times T$ such that
$H(s,0)=s$ and $H(s,1)=(0,\pi(s))$.
\end{lemma}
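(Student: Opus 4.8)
The plan is to contract each point of $S$ to the origin of its plaque along a straight line in the $\R^m$-direction; the entire content of the lemma lies in seeing that this tautological contraction is measurable, and the hypothesis on $S$ is what provides a clean normal form for it.

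First I would use the hypothesis to realize $S$ as a graph. Since $S$ meets each plaque $\R^m\times\{t\}$ in at most one point, the restriction $\pi|_S:S\to T$ is injective, and it is measurable with singleton fibers. By Lusin's theorem (Proposition~\ref{p:Lusin1}), the image $T_0=\pi(S)$ is Borel and $\pi|_S$ is a Borel isomorphism of $S$ onto $T_0$. Composing its inverse with the first-factor projection $p:\R^m\times T\to\R^m$ yields a measurable map $f=p\circ(\pi|_S)^{-1}:T_0\to\R^m$ whose graph is exactly $S$; equivalently, $s\mapsto p(s)$ is measurable on $S$, being the restriction of the MT-map $p$. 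This identification also shows that $S\times\R$ is a standard product lamination whose leaves are the curves $\{s\}\times\R$.

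Next I would define the homotopy by $H(s,u)=\bigl((1-u)\,p(s),\,\pi(s)\bigr)$ for $(s,u)\in S\times\R$. Then $H(s,0)=(p(s),\pi(s))=s$ and $H(s,1)=(0,\pi(s))$, as required. For each fixed $s$ the path $u\mapsto H(s,u)$ is affine, hence continuous, and stays inside the single plaque $\R^m\times\{\pi(s)\}$; thus $H$ is leafwise continuous, i.e.\ it respects the leaf topology of $S\times\R$.

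Finally I would verify ambient measurability, which is the only point to watch. The assignment $(s,u)\mapsto(1-u)\,p(s)$ is the composition of the measurable map $(s,u)\mapsto(u,p(s))$ with the continuous scaling $\R\times\R^m\to\R^m$, hence measurable into $\R^m$, while $(s,u)\mapsto\pi(s)$ is measurable into $T$. Since the \sg-algebra of $\R^m\times T$ is generated by products of Borel sets of the two factors, $H$ is measurable, and therefore an MT-map. I do not expect a genuine obstacle: measurability of the $\R^m$-coordinate of the contraction is immediate from the restriction of $p$, and the graph normal form extracted from the hypothesis via Lusin's theorem is what makes the whole radial construction transparent.
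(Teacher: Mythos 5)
Your proof is correct and is essentially the paper's proof: the paper simply takes the globally defined scaling homotopy $G:(\R^m\times T)\times\R\to\R^m\times T$, $((v,t),u)\mapsto((1-u)v,t)$, and restricts it to $S\times\R$, which is exactly your formula $H(s,u)=\bigl((1-u)\,p(s),\pi(s)\bigr)$. The only difference is your detour through Lusin's theorem to put $S$ in graph form, which is harmless but unnecessary --- as you yourself note, measurability and leafwise continuity of $H$ follow immediately from restricting the ambient MT-map, and neither the paper's argument nor yours actually needs the hypothesis that $S$ meets each plaque at most once.
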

\begin{proof}
Consider the measurable homotopy
$$
G:(\R^m\times T)\times \R\to\R^m\times T\;,\quad((v,t),s)\mapsto((1-s)v,t)\;.
$$
Then $H=G|_{S\times \R}$ satisfies the conditions of the statement.
\end{proof}

\begin{cor}\label{c:homotopy}
Let $T$ and $T'$ be transversals in a measurable chart $U$ which
are bijective via the canonical projection map. Then there exists
a measurable homotopy $H:T\times \R\to U$ such that $H(t,0)=t$ and
$H(t,1)\in T'\cap P_t$ for all $t\in T$, where $P_t$ is the plaque
containing $t$.
\end{cor}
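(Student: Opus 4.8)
The plan is to reduce to the model situation of Lemma~\ref{l:homotopytransversal} and then glue two such homotopies along a central section of the chart. First I would fix the chart isomorphism $\varphi\colon U\to B\times T_0$, where $B$ is an open ball (which I may assume centered at the origin) and $T_0$ is the base standard Borel space, and let $\pi\colon B\times T_0\to T_0$ be the canonical projection. The hypothesis that $T$ and $T'$ are bijective via $\pi$ means precisely that $\pi|_T$ and $\pi|_{T'}$ are injective with the same image $\pi(T)=\pi(T')$; equivalently, each plaque that meets $T$ meets it in a single point and also meets $T'$ in a single point. This yields a plaque-wise correspondence $\phi\colon T\to T'$ sending $t$ to the unique point of $T'\cap P_t$. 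Since both restrictions of $\pi$ have singleton fibers, Proposition~\ref{p:Lusin1} gives measurable inverses on $\pi(T)$, so $\phi=(\pi|_{T'})^{-1}\circ\pi|_T$ is measurable.

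Next I would apply Lemma~\ref{l:homotopytransversal} twice. Each of $T$ and $T'$ meets every plaque in at most one point, so the lemma provides measurable homotopies $H_T\colon T\times\R\to U$ and $H_{T'}\colon T'\times\R\to U$ contracting them, plaque by plaque, to the central section $\varphi^{-1}(\{0\}\times T_0)$; that is, $H_T(t,0)=t$, $H_T(t,1)=\varphi^{-1}(0,\pi(t))$, and likewise for $H_{T'}$ (convexity of $B$ keeps the radial contraction inside the chart). The key point is that both contractions land on the \emph{same} central section, so their time-$1$ values agree over each plaque.

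Finally I would concatenate, running $H_T$ forward and $H_{T'}$ backward through $\phi$. Define
\[
H(t,s)=
\begin{cases}
H_T(t,2s) & s\le \tfrac12,\\
H_{T'}(\phi(t),2-2s) & s\ge \tfrac12,
\end{cases}
\]
extended constantly for $s\notin[0,1]$. At $s=\tfrac12$ both branches equal $\varphi^{-1}(0,\pi(t))$, because $\phi(t)$ lies in $P_t$ and hence has the same $\pi$-image as $t$; so $H$ is well defined and leafwise continuous. The endpoints are $H(t,0)=t$ and $H(t,1)=\phi(t)\in T'\cap P_t$, as required, and every track $s\mapsto H(t,s)$ stays inside the plaque $P_t$ since both $H_T$ and $H_{T'}$ preserve the $\pi$-coordinate. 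Measurability of $H$ follows from that of $H_T$, $H_{T'}$ and $\phi$, together with the fact that it is glued along the measurable sets $\{s\le\tfrac12\}$ and $\{s\ge\tfrac12\}$.

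The step I expect to be the main obstacle is the bookkeeping that makes the glued map simultaneously measurable in the ambient structure and continuous along the leaves: one must check that the reparametrized, $\phi$-twisted second branch is still an MT-map and that the two branches match exactly on the overlap. Once the plaque-wise bijection $\phi$ is known to be measurable (via Proposition~\ref{p:Lusin1}) and the two contractions are seen to share the central section, this gluing is routine, and the convexity of $B$ handles the only geometric point, namely that the contractions never leave the chart.
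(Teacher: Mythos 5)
Your proposal is correct and follows exactly the route the paper intends: the corollary is stated as an immediate consequence of Lemma~\ref{l:homotopytransversal}, and your argument --- contracting both $T$ and $T'$ to the central section via that lemma, using Proposition~\ref{p:Lusin1} to get measurability of the plaque-wise bijection $\phi=(\pi|_{T'})^{-1}\circ\pi|_T$, and concatenating one contraction with the reverse of the other --- is the natural way to make that deduction explicit. The only geometric point (convexity of $B$ keeping the radial contraction inside the chart) and the gluing at $s=\tfrac12$ are handled correctly, so there is nothing to add.
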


\begin{defn}\label{d:chain}
A {\em chain of charts\/} of a measurable foliated atlas
$\UU=\{(U_i,\varphi_i)\}_{i\in\N}$ is a finite sequence,
$\CC=(U_{i_0},\dots,U_{i_n})$, such that $U_{i_j}\cap
U_{i_{j+1}}\neq\emptyset$ for all $j$. The chain of charts $\CC$ covers a path
$c:I=[0,1]\to X$ if there exists a partition
$0=t_0<t_1<\dots<t_n=1$ of $I$ such that $c([t_{j-1},t_j])\subset
U_{i_j}$ for all $j$. The {\em length\/} of a chain of charts $\CC=\{U_{i_0},\dots,U_{i_n}\}$ is $n$. In a similar way we can define a chain of plaques and a the length of a chain of plaques.
\end{defn}

Let $c:I\to X$ be a foliated path (i.e., a path contained
in one leaf). Any chain of charts $\CC=(U_{i_0},\dots,U_{i_n})$ covering $c$ induces a measurable holonomy map $h_\CC$ between transversals
containing $c(0)$ and $c(1)$ like in the
topological case.

\begin{lemma}\label{l:measurabletube}
Let $(X,\FF)$ be a measurable lamination that admits a regular
foliated measurable atlas. Let $c:I\to X$ be a foliated path. Let
$\CC=(U_{i_0},\dots,U_{i_n})$ be a chain of charts covering $c$, and
$h_\CC$ the measurable holonomy map induced by \CC. Let $T$ be the
domain of $h_\CC$. Then there exists a measurable homotopy
$H:T\times \R\to X$ such that $H(t,0)=t$ and $H(t,1)=h_\CC(t)$.
\end{lemma}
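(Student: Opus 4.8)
The plan is to build $H$ by induction on the length $n$ of the chain $\CC$, realizing the holonomy one step at a time and concatenating the resulting elementary homotopies with the operation $*$ of Definition~\ref{d:comphomot}. Write $h_\CC=\gamma_n\circ\cdots\circ\gamma_1$, where each $\gamma_j$ is the elementary holonomy transformation carried by the overlap $U_{i_{j-1}}\cap U_{i_j}$, and set $T_0=T$, $T_j=\gamma_j(T_{j-1})$, so that $T_n=h_\CC(T)$. Each $T_j$ is a transversal lying in $U_{i_j}$ and meeting the relevant plaques of the chain. The homotopy $H$ will move the points of $T$ along their leaves, tracking the path $c$ through the successive charts and terminating at $h_\CC(T)$; in particular it will be tangential, since every elementary step slides points inside plaques, which are contained in single leaves.

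First I would treat the elementary step, which is the heart of the argument. Fix $j$ and consider the overlap $U_{i_{j-1}}\cap U_{i_j}$. By the regularity condition~(ii), each plaque of $U_{i_{j-1}}$ meets at most one plaque of $U_{i_j}$; hence the portion of $T_{j-1}$ lying in the overlap, viewed as a subset of the chart $U_{i_j}$, is again a transversal that is bijective with $T_j$ via the canonical projection of $U_{i_j}$ (distinct source plaques determine distinct target plaques). Corollary~\ref{c:homotopy}, applied inside $U_{i_j}$ to the pair $(T_{j-1},T_j)$, then yields a measurable homotopy $H_j:T_{j-1}\times\R\to U_{i_j}$ with $H_j(t,0)=t$ and $H_j(t,1)=\gamma_j(t)\in T_j$, sliding each point along the plaque of $U_{i_j}$ that contains it. This is exactly the homotopy that realizes $\gamma_j$.

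Then I would assemble $H$ inductively. The base case $n=0$ is the constant homotopy on $T$ (and $n=1$ is just $H_1$). For the inductive step, the subchain $\CC'=(U_{i_0},\dots,U_{i_{n-1}})$ covers $c$ up to the last overlap and, by the inductive hypothesis, furnishes a measurable tangential homotopy $H':T\times\R\to X$ with $H'(t,0)=t$ and $H'(t,1)=h_{\CC'}(t)\in T_{n-1}$. Since $H'(T\times\{1\})=T_{n-1}$ is precisely the domain of the elementary homotopy $H_n$ produced above, the concatenation $H=H'*H_n$ of Definition~\ref{d:comphomot} is well defined; it is measurable and tangential because both factors are, and it satisfies $H(t,0)=t$ and $H(t,1)=\gamma_n(h_{\CC'}(t))=h_\CC(t)$, as required.

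The main obstacle is the elementary step: one must set things up so that Corollary~\ref{c:homotopy} genuinely applies, i.e.\ so that the starting and ending transversals of each holonomy step can be regarded as transversals of a single chart that correspond under its projection. This is exactly where the regularity of the atlas---condition~(ii), that a plaque meets at most one plaque of any other chart---is indispensable, as it guarantees the required bijectivity via projection. The remaining measurability bookkeeping is automatic: each $H_j$ is measurable by Corollary~\ref{c:homotopy}, and measurability is preserved under the finite concatenation $*$.
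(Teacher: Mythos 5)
There is a genuine gap in your elementary step. You apply Corollary~\ref{c:homotopy} inside $U_{i_j}$ to the pair $(T_{j-1},T_j)$ and assert a homotopy $H_j:T_{j-1}\times\R\to U_{i_j}$ with $H_j(t,0)=t$. But $T_{j-1}$ is (a portion of) the canonical transversal of the \emph{previous} chart $U_{i_{j-1}}$, i.e.\ a set of the form $\varphi_{i_{j-1}}^{-1}(\{x_{j-1}\}\times A)$, and there is no reason for it to be contained in $U_{i_j}$ --- it can be entirely disjoint from $U_{i_j}$. For instance, in a product lamination take $U_{i_{j-1}}\cong(0,2)\times T$ and $U_{i_j}\cong(1,3)\times T$ with the canonical transversal of $U_{i_{j-1}}$ sitting at leaf coordinate $1/2$: every plaque of $U_{i_{j-1}}$ meets a plaque of $U_{i_j}$, so $\gamma_j$ is defined on all of $T_{j-1}$, yet $T_{j-1}\cap U_{i_j}=\emptyset$. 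Corollary~\ref{c:homotopy} requires \emph{both} transversals to lie in the chart in which it is applied and produces a homotopy with values in that chart, so it cannot start at points outside $U_{i_j}$; your fallback of using ``the portion of $T_{j-1}$ lying in the overlap'' only defines $H_j$ on that portion (possibly empty) and leaves the remaining points of $T_{j-1}$ with no homotopy at all. Regularity condition~(ii) gives injectivity of the plaque correspondence, but it does nothing to place $T_{j-1}$ inside $U_{i_j}$.

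The repair --- and this is what the paper actually does --- is to insert an auxiliary transversal $S_j$ \emph{inside} each overlap $U_{i_{j-1}}\cap U_{i_j}$, chosen to meet each plaque that crosses the overlap in exactly one point; this choice can be made measurably by Kunugui--Novikov (Theorem~\ref{t:Srivastava}), since in chart coordinates the overlap decomposes as a countable union of products $V_n\times B_n$. Each holonomy step then takes \emph{two} applications of Corollary~\ref{c:homotopy}: slide $T_{j-1}$ to $S_j$ inside $U_{i_{j-1}}$ (both lie in that chart and are projection-bijective on the relevant plaques), then slide $S_j$ to $T_j$ inside $U_{i_j}$. Your inductive concatenation via Definition~\ref{d:comphomot} is fine and matches the paper's scheme; the missing idea is precisely the intermediate transversals in the overlaps, and with them the use of Theorem~\ref{t:Srivastava}, which your argument never invokes.
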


\begin{proof}
There exist $n-1$ transversals, $S_1,\dots,S_{n-1}$, such that
$S_k\subset U_{i_k}\cap U_{i_{k+1}}$, which meet only plaques that
cut these intersections, and only at one point (by
Theorem~~\ref{t:Srivastava}). By Corollary~~\ref{c:homotopy}, we
obtain the required homotopy.
\end{proof}

\begin{lemma}\label{l:holonomytransversals}
Suppose that \FF\ admits a regular foliated measurable atlas. Let
$h:T\to T'$ be a measurable holonomy map. Then there exists a
measurable homotopy $H:T\times \R\to X$ such that $H(t,0)=t$ and
$H(t,1)=h(t)$.
\end{lemma}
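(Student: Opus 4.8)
The plan is to reduce the arbitrary measurable holonomy map $h$ to a countable family of holonomy maps each induced by a single chain of charts, for which Lemma~\ref{l:measurabletube} already produces the desired homotopy. Since the regular foliated measurable atlas $\UU=\{(U_i,\varphi_i)\}_{i\in\N}$ is countable, the collection of all chains of charts $\CC=(U_{i_0},\dots,U_{i_n})$ is itself countable; enumerate it as $\{\CC_n\}_{n\in\N}$. Each $\CC_n$ induces a measurable holonomy map $h_{\CC_n}$ on a transversal $T_n$ (its domain), as described before Lemma~\ref{l:measurabletube}, and by that lemma there is a measurable homotopy $H_n:T_n\times\R\to X$ with $H_n(t,0)=t$ and $H_n(t,1)=h_{\CC_n}(t)$. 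The goal is then to cut $T$ into countably many measurable pieces on each of which $h$ coincides with one of the $h_{\CC_n}$, and to glue the corresponding restricted homotopies.

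First I would form the agreement sets $A_n=\{\,p\in T\cap T_n\mid h(p)=h_{\CC_n}(p)\,\}$. Because $h$ and $h_{\CC_n}$ are measurable maps into the standard Borel space $X$, the map $p\mapsto(h(p),h_{\CC_n}(p))$ is measurable into $X\times X$, and $A_n$ is the preimage of the (Borel) diagonal; hence each $A_n$ is measurable. The key claim is that $\{A_n\}_{n\in\N}$ covers $T$. Given $p\in T$, the point $h(p)$ lies on the same leaf $L_p$, which is a connected manifold and therefore path connected, so there is a foliated path $c_p$ from $p$ to $h(p)$. Its image is compact, hence covered by finitely many charts of $\UU$, from which one extracts a chain $\CC_n$ covering $c_p$; by construction the induced holonomy satisfies $p\in T_n$ and $h_{\CC_n}(p)=c_p(1)=h(p)$, so $p\in A_n$. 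This proves the covering.

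Having the countable measurable cover, I would disjointify it by setting $B_1=A_1$ and $B_n=A_n\setminus(A_1\cup\dots\cup A_{n-1})$, obtaining a measurable partition $T=\bigsqcup_n B_n$ with $B_n\subset A_n\subset T_n$ and $h|_{B_n}=h_{\CC_n}|_{B_n}$. Finally, define $H:T\times\R\to X$ by $H|_{B_n\times\R}=H_n|_{B_n\times\R}$. Since $T\times\R=\bigsqcup_n(B_n\times\R)$ is a countable measurable partition and $H$ agrees with the measurable $H_n$ on each piece, $H$ is measurable; on each line $\{t\}\times\R$ it coincides with the leafwise continuous path $H_n(t,\cdot)$, so $H$ is an MT-map. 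Moreover $H(t,0)=t$ and $H(t,1)=h_{\CC_n}(t)=h(t)$ for $t\in B_n$, as required.

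The main obstacle is the covering step together with the measurability of the $A_n$: one must ensure that the geometric holonomy along some chain actually reproduces the value $h(p)$ at each point, rather than merely landing on the same leaf, and that the resulting family is genuinely countable and measurable. The countability of chains (inherited from the countable atlas) and the pointwise path-lifting argument resolve the first issue, while expressing $A_n$ as the agreement locus of two measurable maps into a standard Borel space resolves the second; the gluing into a single MT-map is then routine.
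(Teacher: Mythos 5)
Your proof has the same skeleton as the paper's (countably many chain holonomies, agreement sets, disjointification, Lemma~\ref{l:measurabletube} on each piece, gluing), but it has a genuine gap at the step where you enumerate the family $\{(h_{\CC_n},T_n)\}$. A chain of charts by itself does \emph{not} induce a single holonomy map with a well-defined domain transversal: as described before Lemma~\ref{l:measurabletube}, the holonomy associated to a chain covering a path $c$ is only defined between \emph{chosen} transversals containing $c(0)$ and $c(1)$. Your argument needs these choices to be fixed in advance (so that the family is countable and the sets $A_n=\{p\in T\cap T_n\mid h(p)=h_{\CC_n}(p)\}$ make sense), and simultaneously needs, for every $p\in T$, some $n$ with $p\in T_n$ and $h_{\CC_n}(p)=h(p)$. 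These two requirements are in conflict. If the $T_n$ are canonical chart transversals $\varphi_i^{-1}(\{\ast\}\times T_i)$, a generic transversal $T$ meets none of them, so the covering claim fails; moreover the equality $h_{\CC_n}(p)=h(p)$ forces the codomain transversal to pass through $h(p)$, again a $p$-dependent choice. If instead you let the transversals depend on $p$ (which is what your path argument implicitly does, taking the transversals through $p$ and through $h(p)=c_p(1)$), the family of holonomy maps becomes uncountable and the countable-partition scheme collapses. So the step ``$p\in T_n$ and $h_{\CC_n}(p)=h(p)$'' is not available for any fixed countable family, as written.

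This is precisely what the paper's first reduction supplies and what your proof omits: by Proposition~\ref{p:Lusin1} (together with the sliding homotopies of Lemma~\ref{l:homotopytransversal} and Corollary~\ref{c:homotopy}, conjugating $h$ by the slides) one may assume from the start that $T$ and $T'$ are contained in countably many transversals associated to charts of the regular atlas. Only after this reduction do the chain holonomies between these \emph{fixed} transversals form a genuine countable family whose domains cover $T$ and on whose agreement sets the holonomy reproduces $h$ pointwise; the rest of your argument then goes through exactly as in the paper. An alternative patch is to work at the level of plaques: take as domain of $\CC_n$ the set of points whose plaque continues through the chain (unique by regularity of the atlas), and as agreement condition ``$h(p)$ lies in the terminal plaque''; your covering argument is then sound, but $h_{\CC_n}(p)$ is a plaque rather than a point, so Lemma~\ref{l:measurabletube} no longer hands you $H_n(t,1)=h(t)$, and you must add a further Lusin decomposition (so each piece meets every plaque at most once) and a final within-plaque homotopy via Corollary~\ref{c:homotopy} to land exactly on $h(p)$. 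Either repair is needed; without one of them the proof does not close.
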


\begin{proof}
By Proposition \ref{p:Lusin1}, we can suppose that $T$ and $T'$ are
contained in transversals associated to charts in the regular
foliated measurable atlas. Observe that there exists a
countable number of chains of charts covering foliated paths
connecting points of $T$ and $T'$; the induced measurable holonomy
maps are denoted by $\{h_1,h_2,\dots\}$. The sets
$$
B_n=\{\,t\in T\mid h_{n}(t)=h(t)\,\}
$$
are measurable. By induction, define
\[
C_1=B_1\;,\quad C_n=B_n\setminus (C_1,\cup\dots\cup C_{n-1})\quad (n>1)\;.
\]
These transversals form a partition of $T$ and,
by Lemma~\ref{l:measurabletube}, there exist homotopies
$H^i:C_i\times \R\to X$ such that $H^i(t,0)=t$ and
$H^i(t,1)=h_{i}(t)=h(t)$. The Borel sets $H^i(C_i\times\{1\})$
form a partition of $T'$ since $h$ is a bijection. Combining these
homotopies, we obtain the desired homotopy.
\end{proof}


\begin{prop}\label{p:completetransversal}
Let $U$ be a categorical open set, and let $T$ be a complete
transversal. There exists a measurable contraction $H$ of $U$ so
that $H(U\times\{1\})\subset T$.
\end{prop}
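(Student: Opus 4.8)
The plan is to deform $U$ in two stages: first use the contraction that exists because $U$ is categorical to pull $U$ onto \emph{some} transversal $S$, then push $S$ into the prescribed complete transversal $T$ by a leafwise holonomy homotopy, and finally concatenate the two. For the first stage, fix a contraction $H^0\colon U\times\R\to X$ with $H^0(\cdot,0)$ the inclusion and $H^0(\cdot,1)$ constant on the leaves of $\FF_U$, and set $S=H^0(U\times\{1\})$. By Proposition~\ref{p:measurability}, $S$ is measurable; and since $H^0(\cdot,1)$ is constant on each plaque-connected piece of $L\cap U$ while $U$ is covered by countably many plaques, $S$ meets every leaf in a countable set, so $S\in\TT(X)$.

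For the second stage I would produce a measurable map $\phi\colon S\to T$ sending each $s$ to a point of $T$ on the same leaf. Consider $R=\{(s,t)\in S\times T : s,t\ \text{lie in the same leaf}\}$. As in the proof of Lemma~\ref{l:holonomytransversals}, the leaf relation on transversals is generated by the countably many holonomy maps $h_1,h_2,\dots$ coming from chains of charts, so $R=\bigcup_n(\operatorname{graph}(h_n)\cap(S\times T))$ is measurable. Because $T$ is complete it meets every leaf, so the first-factor projection $R\to S$ is onto, and it has countable fibers since $T$ is a transversal. By Proposition~\ref{p:Lusin1} this projection admits a measurable section, yielding $\phi$ together with a countable measurable partition $S=\bigcup_i S_i$ on which each $\phi|_{S_i}$ is injective. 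Each $\phi|_{S_i}\colon S_i\to\phi(S_i)\subset T$ is then a measurable isomorphism carrying points to the same leaf, i.e.\ a measurable holonomy transformation, so Lemma~\ref{l:holonomytransversals} supplies homotopies $G^i\colon S_i\times\R\to X$ with $G^i(\cdot,0)=\id$ and $G^i(\cdot,1)=\phi|_{S_i}$. As the $S_i$ partition $S$, these assemble into a measurable homotopy $G\colon S\times\R\to X$ with $G(\cdot,0)=\id$ and $G(S\times\{1\})=\phi(S)\subset T$.

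Finally I would concatenate in the spirit of Definition~\ref{d:comphomot}, with $S$ in the role of the intermediate space: define $H(x,t)=H^0(x,2t)$ for $t\le\frac12$ and $H(x,t)=G(H^0(x,1),2t-1)$ for $t\ge\frac12$. This is well defined and leafwise continuous since the two formulas agree at $t=\frac12$, it is measurable, and $H(\cdot,0)$ is the inclusion. Moreover $H(\cdot,1)=\phi\circ H^0(\cdot,1)$ is constant on the leaves of $\FF_U$ (being $H^0(\cdot,1)$ followed by a map), so $H$ is a contraction, and $H(U\times\{1\})=\phi(S)\subset T$, as required.

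The main obstacle I anticipate is the second stage: establishing that the leaf relation $R$ on $S\times T$ is measurable and extracting the measurable section $\phi$ (and its injective pieces) cleanly via Proposition~\ref{p:Lusin1}. Once $\phi$ is in hand, the appeal to Lemma~\ref{l:holonomytransversals} on each $S_i$ and the verification that the concatenated $H$ remains an MT-map and a contraction are routine.
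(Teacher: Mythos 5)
Your proposal is correct and follows essentially the same route as the paper: contract $U$ onto a transversal via the categorical hypothesis, use Proposition~\ref{p:Lusin1} to partition that transversal into pieces admitting injective measurable holonomy maps into $T$, move them into $T$ via Lemma~\ref{l:holonomytransversals}, and concatenate as in Definition~\ref{d:comphomot}. Your explicit construction of the section $\phi$ from the leaf relation $R$ merely spells out details the paper leaves implicit in its appeal to Proposition~\ref{p:Lusin1}.
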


\begin{proof}
Let $F$ be a contractible homotopy for $U$. Therefore
$T_F=F(U\times\{1\})$ is a measurable transversal. Since $T$ is a
complete transversal, by Proposition~\ref{p:Lusin1}, there exists a
countable partition of $T_F$ into measurable transversals
$\{T^i_F\}$ ($i\in\N$), and there are injective measurable holonomy maps
$h_i:T^i_F\to T$. By the above arguments, these holonomy maps induce a
measurable homotopy $G:T_F\times \R \to \FF$ such that $G(x,0)=x$
and $G(T_F\times\{1\})\subset T$. Then $H=F*G$ is the required
contraction.
\end{proof}

By Proposition~\ref{p:tubeisolatedtransversal}, $\TT^0$ is
contained in a categorical open set. Now, we
prove an analogous property for each $\TT^n$ for $0 < n \leq \dm\FF$.

\begin{prop}\label{p:TT'}
There exists a measurable triangulation $\TT'$ and
categorical open sets $U^n$ such that $\TT'^n\subset U^n$ for $0<
n \leq \dm\FF$.
\end{prop}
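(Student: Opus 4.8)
The plan is to reduce everything to the combinatorics of the barycentric subdivision, carried out leafwise but kept measurable by the hypotheses on $\TT$. First I would replace $\TT$ by a sufficiently fine measurable triangulation $\TT'$, obtained from $\TT$ by iterated leafwise barycentric subdivision; this is again a measurable triangulation because the sets of barycenters remain transversals and the simplex maps $\sg^n$ remain measurable and $C^1$. I would choose $\TT'$ fine enough that the closed star of every simplex of $\TT'$ is contained in the domain of a single chart of a regular measurable atlas. This fineness is the only purpose of passing from $\TT$ to $\TT'$: it guarantees that the deformations built below are genuine MT-maps into $X$, rather than maps that must be reassembled across charts via holonomy. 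The case $n=0$ is already handled, since $\TT'^0$ is an isolated transversal sitting inside a categorical tube by Proposition~\ref{p:tubeisolatedtransversal}, so from now on I fix $0<n\le\dm\FF$.

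For such $n$, let $\TT''$ denote the barycentric subdivision of $\TT'$, and for each $n$-face $\sigma\in\TT'^n$ let $\widehat\sigma$ be its barycenter; the set $\BB'^n=\{\widehat\sigma\mid\sigma\in\TT'^n\}$ is a transversal of $\FF$ by Definition~\ref{d:measurabletriangulation}. I would then set
$$U^n=\bigcup_{\sigma\in\TT'^n}\operatorname{st}(\widehat\sigma,\TT'')\;,$$
the union of the open stars in $\TT''$ of the barycenters of the $n$-faces of $\TT'$. Three points must be checked. First, $\TT'^n\subset U^n$: every open $n$-face $\mathring\sigma$ of $\TT'$ is a union of open simplices of $\TT''$ all having $\widehat\sigma$ as a vertex, hence lies in $\operatorname{st}(\widehat\sigma,\TT'')$. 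Second, the stars in the union are pairwise disjoint, which is the combinatorial heart of the argument: two distinct $n$-faces $\sigma\ne\tau$ are incomparable (neither is a proper face of the other), so $\widehat\sigma$ and $\widehat\tau$ span no edge of $\TT''$ and their open stars do not meet; consequently each connected component of $U^n$ meeting a leaf $L$ is a single star, i.e.\ a single leaf of $\FF_{U^n}$ containing exactly one point of $\BB'^n$. Third, $U^n$ is leafwise open by construction and measurable: describing it locally through the measurable simplex maps of $\TT''$ as $\{\,x\mid\lambda_{\widehat\sigma}(x)>0\,\}$, where $\lambda_{\widehat\sigma}$ is the barycentric coordinate relative to $\widehat\sigma$, exhibits it via Theorem~\ref{t:Srivastava} and a Lusin partition (Proposition~\ref{p:Lusin1}) as a countable union of products, hence measurable.

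The categoricity of $U^n$ then follows from the standard star contraction made measurable. On each star I would use the leafwise homotopy that linearly raises the $\widehat\sigma$-coordinate to $1$: $H^n(x,s)$ is the point of the closed star whose $\widehat\sigma$-coordinate equals $(1-s)\lambda_{\widehat\sigma}(x)+s$ and whose remaining barycentric coordinates are those of $x$ scaled by $(1-s)$. Because each closed star lies in one chart and is the image of the $C^1$ measurable simplex maps of $\TT''$, this formula defines an MT-map $H^n:U^n\times\R\to X$ with $H^n(\cdot,0)=\id$, $H^n(U^n\times\{1\})=\BB'^n$, and $H^n(\cdot,1)$ constant on each component of $U^n$, i.e.\ on each leaf of $\FF_{U^n}$; thus $H^n$ is an $\FF$-contraction and $U^n$ is a categorical measurable open set containing $\TT'^n$. (Here the disjointness established above is exactly what allows the contraction to land in the transversal $\BB'^n$ without being forced to collapse several barycenters together.) Since the open stars of all vertices of $\TT''$ cover $X$ and each such vertex is the barycenter of a unique face of $\TT'$, the sets $U^0,\dots,U^{\dm\FF}$ will cover $X$, which is the use to which this proposition is put later.

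The step I expect to be the main obstacle is the measurability bookkeeping in the last two paragraphs: verifying that the leafwise-defined set $U^n$ belongs to the ambient $\sg$-algebra, and that the fiberwise star contractions assemble into a single measurable map on $U^n\times\R$. This is precisely where the measurability built into Definition~\ref{d:measurabletriangulation} (barycenters are transversals, simplex maps are measurable) must be combined with Kunugui--Novikov (Theorem~\ref{t:Srivastava}) and with the fineness of $\TT'$ that places each star inside a single chart; by contrast the purely combinatorial disjointness of same-dimensional stars, once noticed, is immediate.
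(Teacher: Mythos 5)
Your proof is correct, but it takes a genuinely different route from the paper's. You argue purely combinatorially: $U^n$ is the union of the open stars, in the barycentric subdivision $\TT''$ of $\TT'$, of the barycenters of the $n$-faces of $\TT'$; disjointness is the simplicial fact that distinct $n$-faces are incomparable, so their barycenters span no simplex of $\TT''$; and the contraction is the linear star contraction in barycentric coordinates, made measurable through the simplex maps of $\TT''$ combined with Proposition~\ref{p:Lusin1} and Theorem~\ref{t:Srivastava}. The paper argues metrically instead: it subdivides until every $n$-face lies in an exponential ball about its barycenter, with size controlled by the injectivity radius (this is where Lemma~\ref{l:injectivity radius} is used), takes $U^n$ to be a union of geodesic tube neighborhoods $N(e(p),\rho^n_p)$ of the open $n$-faces with radii $\rho^n_p(x)=\frac{1}{2}\min\{h(x),i(p)\}$, proves their disjointness by a triangle-inequality estimate, and contracts each component to its barycenter via the exponential map. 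What your route buys is independence from the Riemannian machinery: no metric, injectivity radius, or exponential map is needed for this proposition, only the measurable triangulation itself; what the paper's route buys is that it stays inside the framework already set up in this section, where the metric and exponential map are required anyway (Proposition~\ref{p:tubeisolatedtransversal} for the case $n=0$, and Theorem~\ref{t:dimensionalbound}). Two small remarks, neither of which is a gap: the fineness you impose on $\TT'$ (each closed star inside one chart) is unnecessary --- and on non-compact leaves it is not automatic from uniformly iterated subdivision, since the required depth of subdivision varies over the lamination --- because your contraction is defined intrinsically from the simplex maps of $\TT''$ and their Lusin inverses, so its measurability never invokes charts; and you should state explicitly that barycentric subdivision preserves measurability of triangulations (the barycenter sets of $\TT''$ are transversals, and its simplex maps are the $\sg^k$ composed with fixed affine embeddings, hence measurable and $C^1$), a fact that you and the paper both use implicitly.
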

\begin{proof}
Let $e(x)$ be the $n$-face (n-simplex without boundary) containing
$x$. Using barycentrical division, we can suppose that all $n$-faces,
$0<n\leq\dm\FF$, are contained in an exponential ball centered at the
corresponding barycenter (a geodesic ball with radius smaller than the injectivity radius); this triangulation will be called $\TT'$. In fact, we can suppose that the diameter of $e(p)$ is smaller
than $i(p)/2$ for any barycenter $p$. Now, we construct a measurable open set
$U_n$ that contains $\TT'^{n}$ and such that each of its connected
components contains only one $n$-face and is contained in the
respective geodesic ball. This measurable open set contracts to
the set of barycenters of $\TT'^{n}$ by the exponential map, which completes the proof.

Let $\BB'^{n}$ denote the set of barycenters of $\TT'^n$. Let
$\rho:e(p)\to\R^+$ be a continuous function and let $N(e(p),\rho)$
denote the neighborhood of $e(p)$ consisting of the union of the balls of radius $\rho(x)$ in the geodesic orthogonal sections of $e(p)$ through $x$.
We define
$h^n:\TT'^n\to\R$ by $h(x)=d_g(x,\TT'^n\setminus e(x))$, which is
measurable since $g$ and $\TT'$ are measurable. Now, let
$\rho^n_p:e(p)\to\R^+$ be given by
$$\rho^n_p(x)=\frac{1}{2}\minim\{h(x),i(p)\}\;.$$ Clearly,
$U^n=\bigcup_{p\in\BB^n}N(e(p),\rho^n_p)$ is a measurable open set
that covers $\TT'^n$. Each open set $N(e(p),\rho^n_p)$ is contained
in the maximal exponential ball centered at $p$ by definition of
$\rho^n_p$ and the conditions satisfied by $\TT'$. These open sets are disjoint from each other. In fact, if $x\in N(e(p),\rho^n_p)\cap N(e(p'),\rho^n_{p'})$, then
$$
d_g(x,e(p))=d_g(x,\xi)
\leq \frac{1}{2}\,d_g(x,\TT'^n\setminus e(p)) \leq \frac{1}{2}d_g(x,e(p')) = \frac{1}{2}d_g(x,\xi')
$$
for certain $\xi\in e(p)$ and $\xi'\in e(p')$. By the symmetric argument,
$d_g(x,\xi')\leq\frac{1}{2}\,d_g(x,\xi)$, which is
contradiction. Therefore the exponential map defines a measurable
contraction of the measurable open set $U^n$ to $\BB'^n$.
\end{proof}

\begin{thm}[Dimensional upper bound]\label{t:dimensionalbound}
Let $T$ be a complete transversal for the $C^1$ measurable lamination $(X,\FF,\LB)$ with a $C^1$ measurable triangulation. Then $\Cat(\FF,\LB)\leq
(\dm\FF+1)\cdot\LB(T)$.
\end{thm}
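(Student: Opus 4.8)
The plan is to build, from the given $C^1$ measurable triangulation, a finite covering of $X$ by exactly $\dm\FF+1$ categorical measurable open sets---one associated with each skeletal dimension---and then to estimate the transverse size $\tau_\LB$ of each of them by $\LB(T)$, exploiting the completeness of $T$.

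First I would pass to the refined triangulation $\TT'$ furnished by Proposition~\ref{p:TT'}, together with its categorical measurable open sets $U^n\supset\TT'^n$ for $0<n\leq\dm\FF$; for $n=0$ the set of vertices $\TT'^0$ is an isolated transversal, so Proposition~\ref{p:tubeisolatedtransversal} supplies a categorical measurable open set $U^0\supset\TT'^0$. Since every point of a leaf lies in the interior of exactly one simplex of the leafwise triangulation, the open faces $\TT'^0,\TT'^1,\dots,\TT'^{\dm\FF}$ exhaust $X$, and hence so does the finite family $\{U^0,U^1,\dots,U^{\dm\FF}\}$. This is a countable covering of $X$ by categorical measurable open sets, so by the very definition of the $\LB$-category,
\[
  \Cat(\FF,\LB)\leq\sum_{n=0}^{\dm\FF}\tau_\LB(U^n)\;.
\]

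Next I would bound each summand. Because $T$ is complete and each $U^n$ is categorical, Proposition~\ref{p:completetransversal} yields a measurable contraction $H_n$ of $U^n$ with $H_n(U^n\times\{1\})\subset T$. The image is a measurable subset of $T$, hence itself a transversal, so its coherent extension satisfies $\widetilde{\LB}(H_n(U^n\times\{1\}))=\LB(H_n(U^n\times\{1\}))\leq\LB(T)$ by monotonicity of $\LB$. Taking the infimum over deformations gives $\tau_\LB(U^n)\leq\LB(T)$ for every $n$, whence
\[
  \Cat(\FF,\LB)\leq\sum_{n=0}^{\dm\FF}\LB(T)=(\dm\FF+1)\cdot\LB(T)\;.
\]

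The substantive constructions are already discharged in Propositions~\ref{p:TT'} and~\ref{p:completetransversal}, so the proof is largely an assembly; the real point to watch is the \emph{counting}. The estimate is purely dimensional---$\dm\FF+1$ copies of $\LB(T)$, with no dependence on the number of leafwise simplices---precisely because completeness of $T$ lets me funnel every categorical piece into the one fixed transversal $T$, where monotonicity caps the transverse measure at $\LB(T)$ regardless of how many simplices a leaf carries. The main obstacle I anticipate is therefore conceptual rather than computational: verifying that the skeletal open faces genuinely cover every leaf (so that the $U^n$ really cover $X$) and confirming that the exceptional $n=0$ stratum, not produced by Proposition~\ref{p:TT'}, is correctly absorbed through the isolated-transversal tube of Proposition~\ref{p:tubeisolatedtransversal}.
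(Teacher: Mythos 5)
Your proposal is correct and follows essentially the same route as the paper's own proof: covering $X$ by the categorical open sets $U^n$ supplied by Proposition~\ref{p:TT'} (with the $n=0$ stratum handled via Proposition~\ref{p:tubeisolatedtransversal}, exactly as the paper does just before that proposition), and then bounding each $\tau_\LB(U^n)$ by $\LB(T)$ through Proposition~\ref{p:completetransversal}. Your version merely spells out two details the paper leaves implicit---that the open faces exhaust every leaf, and that monotonicity of $\LB$ on transversals gives the bound $\widetilde{\LB}(H_n(U^n\times\{1\}))\leq\LB(T)$---so there is nothing to correct.
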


\begin{proof}
Measurable laminations of class $C^1$ admit a $C^1$ measurable
triangulation and a leafwise Riemannian metric~\cite{Bermudez-Hector}. By the Proposition~\ref{p:TT'}, there
exists a categorical measurable open set $U^n$
containing each set $\TT^n$ associated to a measurable
triangulation for $0\leq n \leq \dm\FF$. Hence the sets $U^n$
cover $X$. By Proposition \ref{p:completetransversal},
$\tau_\LB(U^n)\leq \LB(T)$ for $0\leq n \leq \dm \FF$.
\end{proof}

This theorem has important consequences.

\begin{cor}
Let $(X,\FF)$ a minimal $C^1$ lamination. Let \LB\ be a regular transverse
invariant measure of $\FF$ without atoms. Then $\Cat(\OO(\FF),\LB)=0$.
\end{cor}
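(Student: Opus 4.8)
The plan is to reduce the whole statement to the dimensional upper bound, Theorem~\ref{t:dimensionalbound}, which gives $\Cat(\OO(\FF),\LB)\leq(\dm\FF+1)\cdot\LB(T)$ for every complete transversal $T$. Since $\OO(\FF)$ underlies a $C^1$ lamination, it carries a $C^1$ measurable triangulation and a leafwise Riemannian metric (exactly as invoked in the proof of Theorem~\ref{t:dimensionalbound}), so the hypotheses of that theorem are satisfied. The argument then amounts to producing complete transversals of arbitrarily small $\LB$-measure: if for every $\varepsilon>0$ there is a complete transversal $T_\varepsilon$ with $\LB(T_\varepsilon)<\varepsilon$, then $\Cat(\OO(\FF),\LB)\leq(\dm\FF+1)\,\varepsilon$ for all $\varepsilon>0$, which forces $\Cat(\OO(\FF),\LB)=0$.

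First I would use minimality to show that \emph{every} nonempty open transversal $V$ of $(X,\FF)$ is complete. Its saturation $\str(V)$ is open, because the saturation of an open set of a lamination is open, and it is of course saturated; hence its complement is a closed saturated set. Minimality means that $(X,\FF)$ admits no proper nonempty closed saturated subset, so $\str(V)$ is either empty or all of $X$. As $V\neq\emptyset$, we conclude $\str(V)=X$, i.e.\ $V$ meets every leaf and is therefore complete.

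Next I would produce nonempty open transversals of arbitrarily small measure from the fact that $\LB$ is regular and atomless. Fix a foliated chart and a point $t_0$ in an associated topological transversal. Since $\LB$ is $\sigma$-finite, $t_0$ has a neighborhood of finite measure; since the transversal is metrizable and $\LB$ is regular without atoms, the measures of a decreasing sequence of open neighborhoods of $t_0$ converge to $\LB(\{t_0\})=0$ by continuity from above. Thus for each $\varepsilon>0$ one obtains an open transversal $T_\varepsilon\ni t_0$ with $\LB(T_\varepsilon)<\varepsilon$, which is complete by the previous paragraph; combined with the dimensional bound this yields $\Cat(\OO(\FF),\LB)=0$.

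The routine ingredients here (openness of saturations of open sets, regularity of $\LB$) are standard. The one point demanding care, and hence the main, though mild, obstacle, is the passage from atomlessness to small open transversals: one must stay inside a set of finite measure in order to apply continuity from above, and must keep the shrinking neighborhoods genuinely open, so that completeness via minimality applies, rather than merely measurable.
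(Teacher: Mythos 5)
Your proposal is correct and follows exactly the route the paper intends: the corollary is stated there without proof as an immediate consequence of Theorem~\ref{t:dimensionalbound}, the implicit argument being precisely that minimality makes every nonempty open transversal complete, while regularity and atomlessness of $\Lambda$ supply such transversals of arbitrarily small measure (compare the proof of the ergodic case, Corollary~\ref{c:ergodic}). Your write-up simply fills in the details the paper omits, including the mild care needed to apply continuity from above inside a set of finite measure.
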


Recall that a transverse invariant measure of a foliated measurable space is called {\em ergodic\/} if it is finite in a complete transversal and any
saturated measurable set has null or full measure.

\begin{cor}\label{c:ergodic}
Let $(X,\FF,\LB)$ be a $C^1$ measurable lamination with an ergodic transverse invariant measure without atoms. Then $\Cat(\FF,\LB)=0$.
\end{cor}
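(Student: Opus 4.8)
The plan is to combine the dimensional upper bound (Theorem~\ref{t:dimensionalbound}) with the observation that an ergodic non-atomic transverse invariant measure admits complete transversals of arbitrarily small measure. Since Theorem~\ref{t:dimensionalbound} gives $\Cat(\FF,\LB)\leq(\dm\FF+1)\cdot\LB(T)$ for \emph{every} complete transversal $T$, it suffices to prove that
\[
  \infim\{\,\LB(T)\mid T\ \text{is a complete transversal}\,\}=0\;,
\]
and then let this quantity tend to $0$.

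First I would dispose of the trivial case: if $\LB$ vanishes on some complete transversal the conclusion is immediate, so I assume $\LB(T_0)=c\in(0,\infty)$ for a complete transversal $T_0$, the finiteness being part of the definition of ergodicity. Fix $\epsilon>0$. Because $\LB$ has no atoms, its restriction to the standard Borel space $T_0$ is a finite non-atomic measure, and hence $T_0$ can be split into finitely many Borel pieces $T_0=\bigcup_{i=1}^N T_i$ with $\LB(T_i)<\epsilon$ for every $i$ (take $N>c/\epsilon$).

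The crux is an ergodic pigeonhole argument. Each saturation $\str(T_i)$ is a saturated measurable set (saturations of measurable sets are measurable, since the atlas is countable), so by ergodicity it is either null or co-null for $\widetilde{\LB}$. Their union is $\str(T_0)=X$, which is not null since $\widetilde{\LB}(X)\geq\widetilde{\LB}(T_0)=\LB(T_0)=c>0$; therefore at least one piece, say $T_{i_0}$, has a co-null saturation. Put $B=X\setminus\str(T_{i_0})$, a saturated set with $\widetilde{\LB}(B)=0$, i.e.\ a null-transverse set. The transversal $T_{i_0}$ meets every leaf outside $B$, so it fails to be complete only on the leaves contained in $B$. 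I repair this by adjoining the points of the original transversal lying over $B$, setting $T^{*}=T_{i_0}\cup(T_0\cap B)$. Since $T_0$ is complete it meets every leaf of $B$, so $T^{*}$ is a genuine complete transversal; and since $T_0\cap B\subset B$ with $\widetilde{\LB}(B)=0$, the transversal $T_0\cap B$ is null, whence $\LB(T^{*})\leq\LB(T_{i_0})+\LB(T_0\cap B)<\epsilon$.

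Applying Theorem~\ref{t:dimensionalbound} to the complete transversal $T^{*}$ then yields $\Cat(\FF,\LB)\leq(\dm\FF+1)\cdot\LB(T^{*})<(\dm\FF+1)\,\epsilon$, and letting $\epsilon\to0$ gives $\Cat(\FF,\LB)=0$. I expect the only delicate point to be the tension between the measure-theoretic smallness, which by itself only produces a transversal that is complete \emph{modulo} the null saturated set $B$, and the genuine completeness required by Theorem~\ref{t:dimensionalbound}. This is exactly what the repair step resolves; alternatively one could bypass it by invoking Proposition~\ref{p:nullset} to compute $\Cat(\FF,\LB)$ using only coverings of $X\setminus B=\str(T_{i_0})$, on which $T_{i_0}$ is already complete.
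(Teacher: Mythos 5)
Your proposal is correct and takes essentially the same route as the paper: the paper's entire proof is the one-sentence observation that an ergodic lamination without atoms admits complete transversals of arbitrarily small measure, which is then fed into the dimensional upper bound of Theorem~\ref{t:dimensionalbound}. Your pigeonhole argument via ergodicity of the saturations $\str(T_i)$, together with the repair step $T^{*}=T_{i_0}\cup(T_0\cap B)$ ensuring genuine completeness, simply supplies the details that the paper dismisses with ``of course''.
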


\begin{proof}
Of course, in a ergodic lamination without atoms there exists complete transversals with arbitrarily small measure.
\end{proof}

\section{Cohomological lower bound}

In this section, we give a version of the useful cohomological lower bound of the classical LS category, stating that $\Nil(H^*(M,\Gamma))\le\Cat(M)$ for any manifold $M$, where $\Nil(H^*(M,\Gamma))$ denotes the nilpotence order of the cohomology ring $H^*(M,\Gamma)$ with coefficients in any ring $\Gamma$ \cite{Dubrovin-Novikov-Fomenko}. For this purpose, we give an idea of the cohomology of MT-spaces \cite{Bermudez,Heitsch-Lazarov,Menino2}.

We suppose that $\Gamma$ is a standard ring; i.e., $\Gamma$ is a standard space and a ring where all the operations are measurable.

\begin{defn}[Measurable prism \cite{Bermudez,Bermudez-Hector}]
A {\em meaurable prism\/} is a product of a standard Borel space
$T$ and a linear region of $\R^N$ (for instance a polygon) with
the standard MT-structure. A {\em measurable simplex\/} is a
measurable prism where the topological fiber is a canonical
$n$-simplex $\bigtriangleup^n$. A {\em measurable singular
simplex\/} on $X$ is an MT-map $\sg:\bigtriangleup^n\times T \to X$.
\end{defn}

Let  $\omega$ be a usual {\em singular $n$-cochain\/} over a
coefficient ring $\Gamma$. It is said that $\omega$ is {\em
measurable\/} if $\omega_{\sg}:T\to\Gamma$, $t\mapsto
\omega(\sg_{|\bigtriangleup\times \{t\}})$, is measurable for all
measurable singular $n$-simplex $\sg$. The set of measurable
cochains form a subcomplex of the complex of usual cochains since
the coboundary operator $\delta$ preserves measurability. This is called the
{\em measurable subcomplex\/} and denoted by $C^*_{MT}(X,\Gamma)$, and the corresponding restriction of $\delta$ is denoted by $\widetilde{\delta}$. The {\em measurable singular cohomology\/} is defined as usual by $H^n_{\MT}(X,\Gamma)=\Ker \widetilde{\delta}_n/\Image\widetilde{\delta}_{n-1}$.

The usual cup product induces a product on
measurable cochains, which of course preserves measurability since the operations in $\Gamma$ are measurable. The usual formula
$\widetilde{\delta}(\omega\smile\theta)=\widetilde{\delta}\omega\smile\theta
+ (-1)^n\omega\smile \widetilde{\delta}\theta$ holds. Therefore it
induces a cup product in measurable cohomology, obtaining the
graded ring $(\bigoplus_{n\geq 1}H^n_{\MT}(X,\Gamma), +, \smile)$.

Let $f:X\to Y$ be an MT-map. Clearly, $f$ induces a cochain map $f^*:C^*_{\MT}(Y,\Gamma)\to C^*_{\MT}(X,\Gamma)$
defined by $f^*(\omega)(\sg)=\omega(f\circ\sg)$. This cochain map
commutes with $\widetilde{\delta}$, and therefore it induces a
homomorphism between the corresponding measurable cohomology groups.

Let $U\subset X$ be an MT-subspace of $X$. The inclusion map
determines a chain map $i^*:C^*_{\MT}(X,\Gamma)\to
C^*_{\MT}(U,\Gamma)$. The cochain complex determined by $\Ker(i^*)$
will be denoted by $C^*_{MT}(X,U,\Gamma)$. The cochains in this
cochain complex are usual cochains vanishing on the singular
simples contained in $U$. The cohomology groups associated to this
chain complex will be called the {\em measurable relative
cohomology\/} groups of $(X,U)$. By using the Ker-CoKer Lemma,
there exists a long exact sequence of cohomology groups like in
the classical case (the details are easy to check),
$$
\cdots\to H^{n}_{\MT}(X,U,\Gamma)\to H^n_{\MT}(X,\Gamma)\to
H^n_{\MT}(U,\Gamma)\to H^{n+1}_{\MT}(X,U,\Gamma)\to\cdots
$$

\begin{prop}[Invariance by measurable tangential homotopy]\label{p:invariancetransversals}
Let $f,g:X\to Y$ be MT-homotopic maps. Then $f^*$ and $g^*$
induce the same homomorphism in measurable singular cohomology.
\end{prop}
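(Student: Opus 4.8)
The plan is to reproduce the classical proof that homotopic maps induce chain-homotopic maps on singular (co)chains, and then to verify that the whole construction stays inside the measurable subcomplex $C^*_{\MT}$. Let $H:X\times\R\to Y$ be an MT-homotopy with $H(\cdot,0)=f$ and $H(\cdot,1)=g$. On ordinary singular chains I would use the standard prism operator $P:C_n(X)\to C_{n+1}(Y)$: for a singular $n$-simplex $\sg$ one triangulates the prism $\bigtriangleup^n\times I$ into $(n+1)$-simplices $[v_0,\dots,v_i,w_i,\dots,w_n]$ (with $v_j,w_j$ the vertices of the bottom and top faces) and sets $P(\sg)=\sum_{i=0}^n(-1)^i\,(H\circ(\sg\times\id))|_{[v_0,\dots,v_i,w_i,\dots,w_n]}$. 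The classical identity $\partial P+P\partial=g_\sharp-f_\sharp$ dualizes: writing $P^*\omega(\sg)=\omega(P\sg)$ one obtains $\delta P^*+P^*\delta=g^*-f^*$ on ordinary cochains, so $f^*$ and $g^*$ are cochain-homotopic and agree on cohomology.

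The only genuinely new point is that $P^*$ carries measurable cochains to measurable cochains, so that $P^*$ restricts to the measurable subcomplex and the identity $\widetilde{\delta}\,P^*+P^*\widetilde{\delta}=g^*-f^*$ holds there. To see this, fix a measurable cochain $\omega\in C^{n+1}_{\MT}(Y,\Gamma)$ and a measurable singular simplex $\sg:\bigtriangleup^n\times T\to X$; I must show that $t\mapsto P^*\omega(\sg|_{\bigtriangleup^n\times\{t\}})$ is measurable. The key observation is that the prism decomposition is given by finitely many \emph{fixed} affine maps $\lambda_i:\bigtriangleup^{n+1}\to\bigtriangleup^n\times I$, independent of $t$. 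I would assemble the $t$-parametrized prisms into a single MT-map $\Sigma:(\bigtriangleup^n\times I)\times T\to Y$, $\Sigma((x,s),t)=H(\sg(x,t),s)$, which is an MT-map because it is a composition of the reordering isomorphism $(\bigtriangleup^n\times I)\times T\cong(\bigtriangleup^n\times T)\times I$, the product MT-map $\sg\times\id_I$ into $X\times\R$, and $H$. Composing with the fixed continuous maps $\lambda_i$ then produces measurable singular $(n+1)$-simplices $\tau_i:\bigtriangleup^{n+1}\times T\to Y$, $\tau_i(y,t)=\Sigma(\lambda_i(y),t)$, each again an MT-map.

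With these in hand, $P^*\omega(\sg|_{\bigtriangleup^n\times\{t\}})=\sum_{i=0}^n(-1)^i\,\omega(\tau_i|_{\bigtriangleup^{n+1}\times\{t\}})$, a finite $\Gamma$-linear combination of the functions $t\mapsto\omega(\tau_i|_{\bigtriangleup^{n+1}\times\{t\}})$. Each of these is measurable by the very hypothesis that $\omega$ is a measurable cochain (applied to the measurable singular simplex $\tau_i$), and the finite sum is measurable because the ring operations of the standard ring $\Gamma$ are measurable. Hence $P^*\omega\in C^n_{\MT}(X,\Gamma)$, so $P^*$ preserves the measurable subcomplex. The cochain-homotopy identity then holds verbatim inside $C^*_{\MT}$, and passing to cohomology gives $f^*=g^*$ as homomorphisms $H^*_{\MT}(Y,\Gamma)\to H^*_{\MT}(X,\Gamma)$.

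I expect the main obstacle to be precisely the measurability bookkeeping of the second paragraph: one must check carefully that the factor-reordering together with composition with the fixed affine triangulation maps $\lambda_i$ genuinely yields MT-maps, so that each $\tau_i$ qualifies as a measurable singular simplex to which the defining property of $\omega$ applies. There is no analytic difficulty here beyond that; everything else is the formal prism computation, identical to the topological case once measurability is secured.
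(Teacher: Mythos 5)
Your proof is correct and follows essentially the same route as the paper: both use Hatcher's prism operator, dualized to cochains, to produce a cochain homotopy $\delta P^*+P^*\delta=g^*-f^*$, and then observe that $P^*$ preserves the measurable subcomplex. The only difference is one of detail: the paper merely asserts that the prism operator preserves measurability, whereas your second paragraph actually verifies it (fixed affine triangulation maps independent of the transverse parameter, assembled into MT-maps $\tau_i$ to which the defining property of a measurable cochain applies), which is exactly the bookkeeping the paper leaves implicit.
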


\begin{proof}
The proof is a trivial consequence of the classical proof for
singular cohomology. The measurable homotopy induces a chain
homotopy between $f_*$ and $g_*$ at the level of the chain
complex. The definition is given by cutting the space
$\bigtriangleup^n\times [0,1]$ into a finite number of
$n+1$-prisms $\Pi_i:\bigtriangleup^{n+1}\to\bigtriangleup^n\times [0,1]$. Let $H$ be the measurable homotopy between $f$ and
$g$. The prism operator $P:C^{n+1}(X,\Gamma)\to C^{n}(X,\Gamma)$, defined by
$P(\omega)(\sg)=\sum_i I_i\,\omega(H\circ\sg_{|\Pi_i})$ \cite{Hatcher}, where $I_i$
is the orientation factor, is a cochain homotopy between $f^*$ and
$g^*$ preserving measurability. Hence $f^*$ and $g^*$ induce
the same homomorphism in measurable cohomology.
\end{proof}

Let $C_{\MT}^*(U+V)$ be the cochain complex given by the measurable
cochains which vanish on measurable singular simplices that
do not lie in either $U$ or $V$.
Now, $H_{\MT}^*(\FF_{U\cup V})$ is isomorphic to $H_{\MT}^*(U+V)$ via the restriction
map \cite{Menino2} like in the usual singular cohomology. In fact, by using the 5-Lemma, $H_{\MT}^*(\FF,\FF_U)$ is isomorphic to $H_{\MT}^*(\FF,U+V)$.

An easy computation shows that $H^n_{\MT}(X,\Gamma)=0$ for $n\geq 1$ when $X$ is an MT-space with the discrete topology. That is the case of any measurable transversal of a measurable lamination.

\begin{cor}
The cup product induces a cup product in measurable relative
cohomology:
$$
\smile\;:H^n_{\MT}(\FF,\FF_U,\Gamma)\times H^m_{\MT}(\FF,\FF_V,\Gamma)\to
H^{n+m}_{\MT}(\FF,\FF_{U\cup V},\Gamma)\;.
$$
\end{cor}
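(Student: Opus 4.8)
The plan is to define the product first at the level of measurable cochains, check that relative cochains multiply into the ``$U+V$'' complex, verify compatibility with $\widetilde{\delta}$ via the Leibniz rule recorded above, and finally transport the result across the isomorphism $H^*_{\MT}(\FF,U+V)\cong H^*_{\MT}(\FF,\FF_{U\cup V})$ established just before the statement.

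Recall that $C^*_{\MT}(\FF,\FF_U)=\Ker\bigl(i_U^*:C^*_{\MT}(\FF)\to C^*_{\MT}(\FF_U)\bigr)$ consists of the measurable cochains vanishing on every measurable singular simplex whose image lies in $U$, and similarly for $V$. Let $\omega\in C^n_{\MT}(\FF,\FF_U)$ and $\theta\in C^m_{\MT}(\FF,\FF_V)$. Since the cup product already preserves measurability, $\omega\smile\theta$ is a measurable $(n+m)$-cochain; I would then show it lies in $C^{n+m}_{\MT}(\FF,U+V)$, i.e., that it vanishes on every measurable singular simplex $\sg:\bigtriangleup^{n+m}\times T\to X$ whose image lies in $U$ or in $V$. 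This is the one genuinely combinatorial step: by the definition of the cup product through the front $n$-face and the back $m$-face, if the image of $\sg$ lies in $U$ then its front $n$-face lies in $U$ and $\omega$ kills it, while if the image lies in $V$ then its back $m$-face lies in $V$ and $\theta$ kills it; in either case $(\omega\smile\theta)(\sg)=0$. Thus $\smile$ restricts to a pairing
$$C^n_{\MT}(\FF,\FF_U)\times C^m_{\MT}(\FF,\FF_V)\longrightarrow C^{n+m}_{\MT}(\FF,U+V)\;.$$

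Next I would pass to cohomology. The Leibniz formula $\widetilde{\delta}(\omega\smile\theta)=\widetilde{\delta}\omega\smile\theta+(-1)^n\omega\smile\widetilde{\delta}\theta$, already established for measurable cochains, shows that the product of two relative cocycles is a relative cocycle and that multiplying a relative cocycle by a relative coboundary produces a relative coboundary inside $C^*_{\MT}(\FF,U+V)$; hence the pairing descends to a well-defined product
$$H^n_{\MT}(\FF,\FF_U,\Gamma)\times H^m_{\MT}(\FF,\FF_V,\Gamma)\longrightarrow H^{n+m}_{\MT}(\FF,U+V,\Gamma)\;.$$
Finally, composing with the isomorphism $H^{n+m}_{\MT}(\FF,U+V,\Gamma)\cong H^{n+m}_{\MT}(\FF,\FF_{U\cup V},\Gamma)$ recorded just before the statement (obtained from the $5$-Lemma applied to the long exact sequences of the two pairs together with the restriction isomorphism $H^*_{\MT}(\FF_{U\cup V})\cong H^*_{\MT}(U+V)$ of \cite{Menino2}) yields the asserted cup product.

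The only input requiring real content is this last isomorphism, which is the measurable analogue of the excision/small-simplices theorem; all the genuine difficulty sits there, and once it is granted (as it is in the excerpt) the corollary is formal. The step I expect to be most delicate to phrase correctly is the front-face/back-face vanishing argument, since one must verify that the restriction of a measurable singular simplex to its front and back faces is again an admissible measurable singular simplex lying in $U$ (resp. $V$); this is immediate from the facts that a face of a measurable singular simplex is measurable and that the image of a face is contained in the image of the whole simplex.
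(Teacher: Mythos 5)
Your proposal is correct and takes essentially the same route the paper intends: the corollary is stated as an immediate consequence of the preceding paragraph, where the complex $C^*_{\MT}(U+V)$ and the $5$-Lemma isomorphism $H^*_{\MT}(\FF,U+V)\cong H^*_{\MT}(\FF,\FF_{U\cup V})$ are set up, and your front-face/back-face vanishing argument together with the Leibniz rule for $\widetilde{\delta}$ are exactly the steps the paper leaves implicit. Nothing is missing; you have simply made the tacit argument explicit.
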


\begin{cor}\label{c:nilpotencebound}
$\Nil(\bigoplus_{n\geq 1} H^n_{\MT}(\FF,\Gamma),+,\smile)\leq\Cat \FF$.
\end{cor}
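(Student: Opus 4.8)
The plan is to imitate the classical relative-cup-product argument, carried out entirely inside the measurable singular complex set up above, exploiting that categorical measurable open sets are MT-homotopically trivial in positive degrees. Assume $\Cat\FF=k<\infty$ (otherwise there is nothing to prove), and fix a cover of $X$ by $k$ categorical measurable open sets $U_1,\dots,U_k$, each carrying an \FF-contraction $H_i:U_i\times\R\to X$ deforming the inclusion to a map $c_i=H_i(\cdot,1)$ whose image is a transversal $T_i$.

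First I would show that the restriction homomorphism $H^n_{\MT}(\FF,\Gamma)\to H^n_{\MT}(\FF_{U_i},\Gamma)$ vanishes for every $n\geq 1$ and every $i$. The inclusion $\iota:U_i\to X$ and the map $c_i:U_i\to T_i\hookrightarrow X$ are MT-homotopic via $H_i$, so by the homotopy invariance of measurable cohomology (Proposition~\ref{p:invariancetransversals}) they induce the same homomorphism $H^*_{\MT}(\FF,\Gamma)\to H^*_{\MT}(\FF_{U_i},\Gamma)$. Since $c_i$ factors through the transversal $T_i$, and the positive-degree measurable cohomology of a transversal (a discrete MT-space) vanishes, the homomorphism induced by $c_i$ is zero in degrees $n\geq 1$; hence so is the restriction $\iota^*$.

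Next, for positive-degree classes $\alpha_i\in H^{n_i}_{\MT}(\FF,\Gamma)$ with $n_i\geq 1$ and $1\leq i\leq k$, the vanishing of their restrictions to the $U_i$ lets me lift each $\alpha_i$ through the long exact sequence of the pair $(\FF,\FF_{U_i})$ to a relative class $\bar\alpha_i\in H^{n_i}_{\MT}(\FF,\FF_{U_i},\Gamma)$ mapping to $\alpha_i$. I would then form the iterated relative cup product $\bar\alpha_1\smile\cdots\smile\bar\alpha_k$, which by the relative cup product constructed just above lands in $H^{n_1+\cdots+n_k}_{\MT}(\FF,\FF_{U_1\cup\cdots\cup U_k},\Gamma)$. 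Because the $U_i$ cover $X$, this group equals $H^*_{\MT}(\FF,\FF_X,\Gamma)=H^*_{\MT}(\FF,\FF,\Gamma)=0$, so the relative product vanishes. Finally, naturality of the cup product under the map from relative to absolute cohomology carries $\bar\alpha_1\smile\cdots\smile\bar\alpha_k$ to $\alpha_1\smile\cdots\smile\alpha_k$, whence $\alpha_1\smile\cdots\smile\alpha_k=0$. As the $\alpha_i$ were arbitrary positive-degree classes, every $k$-fold product in $\bigoplus_{n\geq 1}H^n_{\MT}(\FF,\Gamma)$ vanishes, that is, $\Nil\leq k=\Cat\FF$.

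The main obstacle is the iteration of the two-set relative cup product into a $k$-fold product landing over the union $U_1\cup\cdots\cup U_k$. The preceding corollary records only the bilinear relative product for two subspaces; to chain $k$ factors I must pass to the complexes $C^*_{\MT}(U_1+\cdots+U_k)$ of measurable cochains vanishing on simplices that lie in some single $U_j$, and invoke the measurable analogue of the isomorphism $H^*_{\MT}(\FF,\FF_U)\cong H^*_{\MT}(\FF,U+V)$ already used for two sets. Establishing this many-set excision (small-simplices) statement in the measurable category — so that the relative product is associative, well defined over the whole union, and compatible with the absolute product — is the step demanding genuine care; once it is in place, the remainder is the formal classical computation.
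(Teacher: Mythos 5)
Your proposal is correct and follows essentially the same route as the paper's own proof: cover $X$ by categorical measurable open sets, note that the restriction $H^n_{\MT}(\FF,\Gamma)\to H^n_{\MT}(\FF_{U_i},\Gamma)$ vanishes for $n\geq 1$ (homotopy invariance plus the vanishing of positive-degree measurable cohomology of a transversal), so the maps $H^*_{\MT}(\FF,\FF_{U_i},\Gamma)\to H^*_{\MT}(\FF,\Gamma)$ are onto, then lift the classes, cup the lifts into $H^*_{\MT}(\FF,\FF,\Gamma)=0$, and project back. The ``many-set excision'' issue you flag at the end is not actually needed: one can simply iterate the stated bilinear relative product, at each stage taking one of the two subspaces to be the union already accumulated, which is what the paper implicitly does.
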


\begin{proof}
Let $\{U_1,\dots,U_N\}$ be a covering by tangentially contractible measurable open sets. The map $H^*_{\MT}(\FF,\FF_{U_i},\Gamma)\to H^*_{\MT}(\FF,\Gamma)$ of the cohomological exact sequence is onto since each $U_i$ is categorical. Let $x_1,\dots,x_N$ be cohomology clases in $H^*_{\MT}(\FF)$, and take a preimage $\omega_i$ of each $x_i$ in $H^*_{\MT}(\FF,\FF_{U_i},\Gamma)$. Therefore $\omega_1\smile\dots\smile \omega_N\in H^*_{\MT}(\FF,\FF,\Gamma)=0$, and this product projects to $x_1\smile\dots\smile x_N$ in $H^*_{\MT}(\FF,\Gamma)$.
\end{proof}

Of course, singular measurable cohomology is a complicated object. Fortunately, we can work with simplicial measurable cohomology \cite{Bermudez,Heitsch-Lazarov,Menino2}.

Let $\TT$ be a measurable triangulation. An {\em $n$-cochain\/} over a measurable ring
$\Gamma$ is a measurable map $\omega:\BB^n\to \Gamma$, where the
barycenters in $\BB^n$ are identified to the corresponding $n$-simplices. Let $C^n(\TT,\Gamma)$ denote the set of
simplicial n-cochains, which is endowed with a ring structure induced by
$\Gamma$. Define the coboundary operator $\delta:C^n(\TT,\Gamma)\to
C^{n+1}(\TT,\Gamma)$ as usual: for $\omega:\BB_{n+1}\to\Gamma$, let $\delta\omega:\BB_{n+1}\to\Gamma$ be given by
$\delta\omega(b)=\sum_{\bigtriangleup^n_p\subset\partial\bigtriangleup^{n+1}_b}(-1)^n\,\omega(p)$,
where $\bigtriangleup^k_x$ denotes the $k$-simplex with barycenter $x$ and the simplices of the boundary are chosen in the usual order~\cite{Hatcher}. Clearly, $\delta^2=0$, and we can define the cohomology groups as usual:
$H^n(\TT,\Gamma)=\Ker d_n/\Image d_{n-1}$.

\begin{prop}[\cite{Menino2}]\label{p:equivsimplicialsingular}
Let $(X,\FF)$ be a measurable lamination that admits a
measurable triangulation. Then the measurable singular cohomology
groups are isomorphic to the measurable simplicial ones.
\end{prop}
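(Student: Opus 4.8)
The plan is to construct a natural comparison cochain map $\Phi:C^*_{\MT}(X,\Gamma)\to C^*(\TT,\Gamma)$ and to prove that the induced map on cohomology is an isomorphism by induction over the measurable skeleta of the triangulation. This mirrors the classical proof that simplicial and singular cohomology agree on a triangulated space; the only genuinely new point is that every auxiliary operator (excision, good-pair retraction, prism homotopy) must be realized measurably over the transverse direction, which is possible because each is given by a fixed affine recipe on the standard simplex, transported over the relevant transversal $\BB^n$. To define $\Phi$ itself, I would use the MT-maps $\sg^n:\bigtriangleup^n\times\BB^n\to X$ of Definition~\ref{d:measurabletriangulation}, whose restriction to a barycenter $p$ is the characteristic singular simplex of the $n$-simplex with barycenter $p$, and set $\Phi(\omega)(p)=\omega(\sg^n(\cdot,p))$ for $\omega\in C^n_{\MT}(X,\Gamma)$ and $p\in\BB^n$. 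That $\Phi(\omega)$ is a \emph{measurable} simplicial cochain is exactly the defining measurability condition of the singular cochain $\omega$ applied to the measurable singular simplex $\sg^n$; and $\Phi$ commutes with the coboundary because the simplicial $\delta$ was defined precisely to reproduce the alternating sum of the faces of the $\sg^n$. Hence $\Phi$ is a cochain map and induces $\Phi^*$ in cohomology.

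Next I would filter $X$ by its measurable $k$-skeleta $X^{(k)}=\bigcup_{n\le k}\sg^n(\bigtriangleup^n\times\BB^n)$. Each $\sg^n$ has countable fibres, so by Proposition~\ref{p:Lusin1} the skeleta are Borel, hence MT-subspaces of $X$ carrying their own measurable singular cohomology and fitting into the relative long exact sequences established above. Since the leaves are manifolds of dimension $d=\dm\FF$, one has $X=X^{(d)}$, so the filtration is finite and the induction terminates. The base case $X^{(0)}=\BB^0$ is a discrete MT-space, for which $H^{\ge 1}_{\MT}=0$ and $H^0_{\MT}$ is the module of measurable $\Gamma$-valued functions; this matches the simplicial answer, and $\Phi^*$ is the evident identification between them.

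For the inductive step I would compare the long exact sequences of the pairs $(X^{(k)},X^{(k-1)})$ in simplicial and in singular cohomology, taking $\Phi$ as the vertical comparison. The simplicial relative term is immediate: $H^*(\TT^{(k)},\TT^{(k-1)},\Gamma)$ is the module of measurable maps $\BB^k\to\Gamma$, concentrated in degree $k$. The crux is the matching singular computation, namely that $H^n_{\MT}(X^{(k)},X^{(k-1)},\Gamma)$ is the module of measurable maps $\BB^k\to\Gamma$ in degree $k$ and vanishes otherwise, with $\Phi$ sending the generator attached to each $k$-simplex to its simplicial counterpart. Leafwise this is the classical fact $H^*(\bigtriangleup^k,\partial\bigtriangleup^k;\Gamma)=\Gamma$ in degree $k$; to make it measurable I would excise $X^{(k-1)}$ through a measurable deformation retraction of a regular neighbourhood of the open $k$-faces, built fibrewise over $\BB^k$ from the single affine model $(\bigtriangleup^k,\partial\bigtriangleup^k)$, and invoke the homotopy invariance of measurable cohomology (Proposition~\ref{p:invariancetransversals}). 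Granting this, the inductive hypothesis for $X^{(k-1)}$ together with the five lemma shows that $\Phi^*:H^*_{\MT}(X^{(k)})\to H^*(\TT^{(k)})$ is an isomorphism; taking $k=d$ proves the proposition.

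The main obstacle is exactly this relative computation. The underlying topology is entirely classical, so the difficulty is not conceptual but one of uniform measurability: one must check that the excision, the good-pair deformation retraction, and the identification of the degree-$k$ generator can all be performed by one recipe on the standard pair $(\bigtriangleup^k,\partial\bigtriangleup^k)$ and carried over $\BB^k$, so that the resulting deformations are genuine MT-homotopies and the generator assignment $\BB^k\to\Gamma$ lands in the measurable cochains. Once this is verified, the remainder is routine diagram chasing with the five lemma and the exact sequences already at our disposal.
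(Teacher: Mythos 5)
There is nothing in the paper to compare your proof against: Proposition~\ref{p:equivsimplicialsingular} is stated with the citation \cite{Menino2} and is never proved in this paper; its proof lives in that companion preprint. So the only question is whether your blind argument is sound on its own terms.

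In outline it is, and it uses exactly the tools this paper puts on the table: the comparison map $\Phi$ is measurable for the reason you give (measurability of $\omega_{\sg^n}$ is the defining property of a measurable cochain applied to the measurable singular simplex $\sg^n$); the skeleta are Borel by Proposition~\ref{p:Lusin1}; the filtration stops at $X^{(\dm\FF)}=X$; the base case is the paper's observation that discrete MT-spaces have vanishing higher measurable cohomology; and the ladder/five-lemma step rests on the long exact sequence, homotopy invariance (Proposition~\ref{p:invariancetransversals}), and the $U+V$ excision isomorphism. Note, however, that the $U+V$ theorem is itself only quoted here from \cite{Menino2}, so your proof is conditional on the same external machinery; the measurable subdivision argument behind it is the real analytic content. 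Beyond that, two points need tightening. (i) The identity $\Phi\circ\widetilde{\delta}=\delta\circ\Phi$ is not automatic: the restriction of $\sg^{n+1}_b$ to a face of $\partial\bigtriangleup^{n+1}$ and the characteristic map $\sg^n_p$ of that face agree only up to an affine reparametrization of $\bigtriangleup^n$, so you must either build compatibly ordered characteristic maps into the measurable triangulation (delta-complex style) or make a measurable choice of vertex orderings and track the resulting signs. (ii) Your key relative computation is asserted rather than executed. To execute it: excise, via the quoted $U+V$ isomorphism, with $U$ a fibrewise collar neighbourhood of $X^{(k-1)}$ and $V$ the union of open $k$-faces, which is MT-isomorphic to $\intr(\bigtriangleup^k)\times\BB^k$; this reduces the problem to computing $H^*_{\MT}((\bigtriangleup^k,\partial\bigtriangleup^k)\times T)$ for a standard space $T$, which follows by induction on $k$ using measurable Mayer--Vietoris (again a consequence of the $U+V$ theorem) for $\partial\bigtriangleup^k\times T$ covered by two fibrewise contractible hemispheres. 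Every homotopy in this chain is a fixed affine recipe crossed with $\id_T$, hence an MT-homotopy --- the uniformity you correctly flagged as the crux. With (i) and (ii) filled in, your argument is complete.
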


\begin{cor}
The measurable simplical cohomology does not depend on the measurable triangulation.
\end{cor}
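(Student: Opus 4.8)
The plan is to derive this statement as an immediate consequence of Proposition~\ref{p:equivsimplicialsingular}. The essential point is that the measurable singular cohomology groups $H^*_{\MT}(X,\Gamma)$ are defined intrinsically from the MT-structure of $(X,\FF)$: their definition uses only measurable singular simplices $\sg:\bigtriangleup^n\times T\to X$ and the measurable coboundary $\widetilde\delta$, with no reference whatsoever to a choice of measurable triangulation. Thus $H^*_{\MT}(X,\Gamma)$ is a fixed object attached to the measurable lamination, independent of any triangulation data.

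First I would fix two measurable triangulations $\TT_1$ and $\TT_2$ on $(X,\FF)$. Applying Proposition~\ref{p:equivsimplicialsingular} to each of them separately yields isomorphisms
\[
  H^n(\TT_1,\Gamma)\;\cong\;H^n_{\MT}(X,\Gamma)\;\cong\;H^n(\TT_2,\Gamma)
\]
for every $n$. Composing the first isomorphism with the inverse of the second produces the desired isomorphism $H^n(\TT_1,\Gamma)\cong H^n(\TT_2,\Gamma)$, which shows that the simplicial cohomology is independent of the chosen triangulation. This is the whole content of the argument; the corollary is a formal consequence of the fact that both simplicial theories factor through the same triangulation-free singular theory.

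The only point that requires any care, and the step I would flag as the potential obstacle, is the implicit claim that the isomorphisms supplied by Proposition~\ref{p:equivsimplicialsingular} are canonical enough to be composed meaningfully. If one wants the resulting identification $H^n(\TT_1,\Gamma)\cong H^n(\TT_2,\Gamma)$ to be natural (say, compatible with MT-maps or with the ring structure induced by $\Gamma$ and the cup product), then I would need to inspect the construction underlying Proposition~\ref{p:equivsimplicialsingular} and verify that the comparison maps are induced by natural chain-level operations rather than arbitrary choices. For the bare statement of the corollary, however, mere existence of the isomorphisms suffices, so no further work beyond the composition above is needed.
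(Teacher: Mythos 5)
Your proof is correct and is exactly the argument the paper intends: the corollary is stated as an immediate consequence of Proposition~\ref{p:equivsimplicialsingular}, and the paper's (implicit) reasoning is precisely that both simplicial cohomologies are identified with the triangulation-free measurable singular cohomology, hence with each other. Your remark on naturality is a reasonable caveat but, as you note, unnecessary for the statement as given.
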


\begin{rem}
Of course, we can define the concept of measurable CW-structure in a similar way to a measurable triangulation. This notion gives a measurable CW-complex and a measurable cellular cohomology. It can be proved that it is isomorphic to the measurable singular cohomology with arguments similar to the above ones.
\end{rem}

\begin{exmp}\label{e:KroneckerCohom}
Let $(T^2,\FF_\alpha)$ be the K\"onecker flow, considered as a
suspension of the rotation $R_{\alpha}:S^1\to S^1$ of $2\pi\alpha$
radians. The case where $\alpha$ is rational is trivial (it is a foliation with compact leaves). Then suppose that $\alpha$ is irrational. The projection of
$[0,1]\times S^1$ to $T^2$, given by the suspension of $R_\alpha$, induces a
measurable triangulation of $\FF_\alpha$, where the $0$-skeleton is
the projection of $\{0\}\times S^1$ and the $1$-skeleton is the
projection of $[0,1]\times S^1$; the set of
barycenters is the projection of $\{1/2\}\times S^1$. Of course,
measurable cochains of degrees $0$ and $1$ are measurable maps $f:S^1\to
\Z_2$. In \cite{Menino2}, it is proved that the $1$-cochain $\omega=1:S^1\to \Z_2$
represents a non-zero element in $H^1_{\MT}(\FF_\alpha,\Z_2)$, showing that
$H^1(\FF_\alpha,\Z_2)\ne0$. By Corollary~\ref{c:nilpotencebound} and the dimensional bound, it follows that $\Cat(\FF_\alpha)=2$.

In higher dimension, let $\FF_{\alpha_1,\dots,\alpha_n}$ be the foliation on $T^{n+1}$ given by the suspension of the rotations $R_{\alpha_1},\dots,R_{\alpha_n}$ of $S^1$, where $\alpha_1,\dots,\alpha_n$ are $\Q$-linear independent. Each
leaf of $\FF_{\alpha_1,\dots,\alpha_n}$ is a hyperplane dense in $T^{n+1}$. Let $[0,1]^n\subset\R^n$ be the unit cube and let $T\equiv S^1$.
The product $[0,1]^n\times T$ gives a {\em measurable CW-structure\/} on
$\FF$ given by the projection $p:[0,1]^n\times
T\to\FF$ defined by the suspension. Let $\omega=1:T\to\Z_2$, which is
a CW-cochain of dimension $n$. Let $\tau_i$ the CW-cochain of
dimension $1$ satisfying $\tau_i(p_{|[0,1]_j\times T})=\delta_{ij}$, where
$$
[0,1]_j=\{\,(x_1,\dots,x_n)\in[0,1]^n\mid x_i=0\ \text{for}\ i\neq j\,\}\;.
$$
Clearly $\omega=\tau_1\smile\dots\smile \tau_{n}$. Moreover $\omega$ represents a non-zero element in $H^n_{\MT}(\FF_{\alpha_1,\dots,\alpha_n},\Z_2)$ \cite{Menino2}, obtaining $H^n_{\MT}(\FF_{\alpha_1,\dots,\alpha_n},\Z_2)\ne0$. Then Corollary~\ref{c:nilpotencebound} and the dimensional upper bound yields $\Cat(\FF_{\alpha_1,\dots,\alpha_n})=n+1$.
\end{exmp}

\begin{rem}
We do not discuss here about the possibility of a similar lower bound for the \LB-category. In many general situations, this invariant is zero and a lower bound is not so interesting. In the cases where there exists a complete transversal $T$ meeting each leaf at exactly one point, the number $\int_T \Nil H^*(L_t,\Gamma)\,d\LB(t)$ is well defined and gives a lower bound for the \LB-category.
\end{rem}

\section{Critical points}\label{s:critical measurable}

As suggested by the classical theory, because of the possible applications, it is interesting to consider Hilbert laminations to study the relation between our measurable versions of the tangential category and the critical points of a smooth function.

Recall the following terminology. A {\em Hilbert manifold\/} is a separable, Hausdorff space endowed with an atlas where the charts are homeomorphisms to open subsets of separable Hilbert spaces. A $C^2$ function on a complete $C^2$ Hilbert manifold, $f:M\to\R$, is called {\em Palais-Smale\/} whenever, for any sequence $(p_n)$ in $M$, if $(f(p_n))$ is bounded and $(df(p_n))$ converges to
zero, then $(p_n)$ contains a convergent subsequence. If $M$ is
compact (in the finite dimensional case), then all differentiable maps are Palais-Smale. Let
$\Crit(f)$ be the set of critical points of $f$. In this section,
we adapt a theorem due to J.~Schwartz \cite{Schwartz}, which states that, for a
bounded from below Palais-Smale function on a $C^2$ Hilbert manifold, $f:M\to \R$, we have
$\Cat(M)\leq\#\Crit(f)$.

For the rest of the section, we consider $C^2$ measurable Hilbert
laminations. Their definition is analogous to the definition of
measurable lamination: the leafwise topological model is now given by open
balls in a separable Hilbert space instead of $\R^n$, and the
tangential part of the changes of coordinates are $C^2$ maps
between open subsets of Hilbert spaces. Each leaf is a Hilbert
manifold, which is endowed with a Riemannian metric that varies in
a measurable way in the ambient space. Observe that the main
difference with the finite dimensional case is that the leaves may
not be locally compact. We also suppose that the leafwise
Riemannian metric is complete.

To define the \LB-category of a measurable Hilbert lamination with a transverse invariant measure $\Lambda$, we consider only contractible measurable open sets. The reason is that Proposition~\ref{p:posit} seems to be difficult to generalize to this infinite dimensional setting, as well as other details about measurability. However, Proposition~\ref{p:coherentextension} holds for measurable Hilbert laminations too, and therefore we can use the coherent extension $\widetilde{\Lambda}$ of $\Lambda$.

Notice that the differential map of a function varies in a measurable way in
the ambient space, since its definition is a limit of measurable maps. The set
of $C^2$ MT-functions $\FF\to\R$ will be noted by $C^{2}(\FF)$. For $f\in C^{2}(\FF)$, let $\Crit_{\FF}(f)=\bigcup_{L\in
\FF}\Crit(f|_L)$.

The cotangent bundle $T\FF^*$ is a measurable vector bundle, whose zero section $\theta:X\to T\FF^{*}$ is measurable with measurable image. For any $f\in C^{2}(\FF)$, its differential map $df:X\to T\FF^{*}$ is measurable, and we have $\Crit_{\FF}(f)=df^{-1}(\theta(X))$. Thus $\Crit_{\FF}(f)$ is measurable.

Recall that a $C^1$ {\em isotopy\/} on a $C^1$-Hilbert manifold $M$ is a
differentiable map $\phi:M\times\R\to M$ such that $\phi_t=\phi(\cdot,t):M\to M$
is a diffeomorphism $\forall t\in [0,1]$ and $\phi_0=\id_M$.

\begin{defn}[Measurable tangential isotopy]
Let $(X,\FF)$ be a $C^{1}$ measurable Hilbert lamination. A {\em
measurable tangential isotopy\/} on $(X,\FF)$ is a $C^{1}$ map
$\phi:X\times \R\to X$ such that the functions $\phi_t:X\to X$ are
MT-diffeomorphisms $\forall t$, with $\phi_0=\id_X$, and the map $\phi:\FF\times\R\to\FF$, with $\phi(x,t)=\phi_t(x)$, is $C^1$, where $\FF\times\R$ is the $C^1$ measurable lamination in $X\times\R$ with leaves of the form $L\times\R$ for $L\in\FF$. In particular, the maps $\phi^x:\R\to X$, with $\phi^x(t)=\phi_t(x)$, are differentiable $\forall x\in X$.
\end{defn}

\begin{rem}\label{r:openinv}
Let $\phi$ be a measurable tangential isotopy on $X$ and let
$U\subset X$ be a measurable open set. Then
$\Cat(U,\FF,\LB)=\Cat(\phi_t(U),\FF,\LB)$ and $\Cat(U,\FF)=\Cat(\phi_t(U),\FF)$ for all $t\in\R$.
\end{rem}

\begin{exmp}[Construction of a measurable tangential isotopy \cite{Palais}]
\label{e:measurable tangential isotopy}
A tangential isotopy can be constructed on a Hilbert manifold by using a $C^1$ tangent vector field $V$. There exists a flow $\phi_t(p)$ such that $\phi_0(p)=p$, $\phi_{t+s}(p)=\phi_t(\phi_s(p))$ and $d\phi_t(p)/dt=V(\phi_t(p))$. From the way of obtaining $\phi$ \cite{Palais,Crandall-Pazy}, it follows that the same kind of construction for a measurable $C^1$ tangent vector field on a measurable Hilbert lamination $(X,\FF)$ induces a measurable isotopy on $(X,\FF)$.

Now, we obtain a measurable isotopy from the gradient flow. It will be modified by a control function $\alpha$ in order to have some control in the deformations induced by the corresponding isotopy.
Let $\nabla f$ be the gradient tangent vector field of $f$; i.e., the
unique tangent vector field satisfying $df(v)=\langle v,\nabla
f\rangle$ for all $v\in T\FF$. Take the $C^{1}$ vector field $V=-\alpha(|\nabla f|)\,\nabla f$, where $\alpha:[0,\infty)\to\R^+$ is $C^\infty$, $\alpha(t)\equiv 1$ for $0\leq t\leq 1$, $t^2\alpha(t)$ is monotone non-decreasing and $t^2\alpha(t)=2$ for $t\geq 2$. The flow $\phi_t(p)$ of $V$ is defined for $-\infty<t<\infty$ \cite{Schwartz}, and it is called the {\em modified gradient flow\/}.
\end{exmp}

Let us define a partial order relation ``$\ll$'' for the critical
points of $f$. First, we say that $x<y$ if there exists a regular
point $p$ such that $x\in\alpha(p)$ and $y\in\omega(p)$, where
$\alpha(p)$ and $\omega(p)$ are the $\alpha$- and $\omega$-limits of
$p$. Then $x\ll y$ if there exists a finite sequence of critical
points, $x_1,\dots,x_n$, such that $x < x_1 < \dots< x_n < y$.

Let $\gamma(x)$ denote
the $\phi$-orbit of each
point $x$.


\begin{lemma}\label{l:lowaproxim}
Let $T\subset X$ be an isolated transversal. Then there exists a measurable
open set $U$ containing $T$ such that
$\Cat(U,\FF,\LB)\leq \LB(T)$. We can suppose also that $U$ is tengentially categorical contracting to $T$.
\end{lemma}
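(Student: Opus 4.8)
The plan is to deduce the whole statement from one geometric construction, namely a measurable open tube $U\supset T$ that contracts leafwise onto $T$; once this is available everything else is formal. Suppose we have a measurable open $U\supset T$ together with a measurable tangential contraction $H\colon U\times\R\to X$ with $H(\cdot,0)=\mathrm{incl}_U$ and $H(U\times\{1\})=T$. Then $U$ is $\FF$-categorical, and for this particular $H$ the image $H(U\times\{1\})=T$ is a transversal, hence measurable, so that $\tau_\LB(U)\le\widetilde{\LB}(T)=\LB(T)$, the last equality being the defining property of the coherent extension on transversals (Proposition~\ref{p:coherentextension}). Taking the one-element covering $\{U\}$ of $U$ by a contractible measurable open set (the only coverings allowed in the Hilbert setting) then yields $\Cat(U,\FF,\LB)\le\tau_\LB(U)\le\LB(T)$. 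I would stress that choosing $H$ to land exactly on $T$ circumvents the general measurability-of-image difficulty flagged for infinite dimensions, since here the image is the given measurable set $T$.

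To build $U$, I would copy the tube construction of Proposition~\ref{p:tubeisolatedtransversal}. As $T$ is isolated, the separation function $h(p)=\inf\{d_g(p,p')\mid p'\in(T\cap L_p)\setminus\{p\}\}$ is measurable (measurability of $d_g$) and positive; redefining $h\equiv1$ on the measurable set $h^{-1}(\infty)$ we may assume $0<h<\infty$. Put
\[
  U=\bigcup_{p\in T}B^g\bigl(p,\tfrac12\min\{h(p),i(p)\}\bigr),
\]
where $i$ is the leafwise injectivity radius of the exponential map. The same triangle-inequality estimate as in Proposition~\ref{p:tubeisolatedtransversal} shows these balls are pairwise disjoint, that each lies in a single leaf and meets $T$ only at its center, and the bound $\tfrac12\min\{h,i\}\le\tfrac12 i$ keeps every ball strictly inside the injectivity radius. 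Hence $\exp_p$ is a diffeomorphism on each ball, and the leafwise radial homotopy $H(x,s)=\exp_p\bigl((1-s)\exp_p^{-1}(x)\bigr)$, where $p=p(x)\in T$ is the (unique, measurably chosen) center of the component containing $x$, is a well-defined measurable tangential contraction of $U$ onto $T$.

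The main obstacle is the measurability of the injectivity radius in the Hilbert setting, that is, the analogue of Lemma~\ref{l:injectivity radius}; note that we only need $i$ measurable along the transversal $T$, which localizes the difficulty to a standard Borel space. The finite-dimensional proof reduces $X$ to countably many products on which the leafwise metric varies by less than $2^{-n}$, using the compactness of plaque closures and the separability of the space of metric coefficients, and then invokes lower semicontinuity of the injectivity radius under metric perturbation. Both ingredients fail for Hilbert laminations: plaques are not locally compact, and the space of leafwise Riemannian metrics on a ball of a separable Hilbert space is not separable in the sup topology, so the $2^{-n}$-covering argument collapses. I would attempt to repair this by retaining only what survives: $\exp$ is measurable and a local diffeomorphism at the origin of each $T_p\FF$, the leaf models are separable and second countable, and $i$ is lower semicontinuous along each leaf. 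Working chart by chart, one would express $\{i>r\}$ through the injectivity and nondegeneracy of $\exp$ on leafwise balls of radius $r$, test these conditions on a countable dense subset of the separable Hilbert ball rather than on compact sets, and transfer the leafwise lower semicontinuity to transverse measurability using the measurable dependence of $g$ on the transverse parameter. This transfer is the delicate point on which the whole argument rests; granting it, $U$ is measurable and the construction above completes the proof.
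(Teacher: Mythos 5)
Your proposal is correct and follows essentially the same route as the paper: its own proof is just the two-line observation that a tubular neighborhood of $T$ (the construction of Proposition~\ref{p:tubeisolatedtransversal}, cited there via Lemma~\ref{l:measurabletube}) contracts measurably onto $T$ and is asserted to generalize to measurable Hilbert laminations, whence the relative $\Lambda$-category is at most $\widetilde{\LB}(T)=\LB(T)$, which is exactly your formal conclusion. The delicate point you flag --- measurability of the injectivity radius in the infinite-dimensional setting, where the compactness and separability used in Lemma~\ref{l:injectivity radius} fail --- is not resolved by the paper either, being subsumed in its unproved claim that the tube construction ``generalizes to measurable Hilbert laminations'', so your treatment is, if anything, more scrupulous than the original.
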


\begin{proof}
A tubular neighborhood of $T$ contracts to $T$ (see Lemma~\ref{l:measurabletube} and observe that its proof generalizes to measurable Hilbert laminations). Hence its relative category is less or equal than $\LB(T)$.
\end{proof}

In the following proposition, $\{W_1,W_2,\dots\}$ denotes a
foliated measurable atlas. Let $\Crit_\FF^\infty(f)$ be the union
of plaques that contain infinite critical points of $f$; observe
that this is a measurable open set.

\begin{prop}
If $\widetilde{\LB}(\Crit_\FF^\infty(f))>0$, then
$\widetilde{\LB}(\Crit_\FF(f))=\infty$.
\end{prop}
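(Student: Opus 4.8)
The plan is to localize to a single chart and then build, inside the critical set, a countable family of disjoint transversals whose $\LB$-measures sum to $\infty$. Write $\Crit_{\FF}^\infty(f)=\bigcup_n C_n$, where $C_n$ is the union of those plaques of the chart $W_n\cong P_n\times T_n$ that contain infinitely many critical points; each $C_n$ is a union of full $W_n$-plaques, so $C_n=P_n\times S_n$ for a measurable $S_n\subset T_n$. Since $\widetilde{\LB}$ is a (countably additive) Borel measure and $\widetilde{\LB}(\bigcup_n C_n)>0$, some chart has $\widetilde{\LB}(C_n)>0$. I would then show $\LB(S_n)>0$ by appealing to property (ii) of the coherent extension (Proposition~\ref{p:coherentextension}): if $\LB(S_n)=0$, then $C_n=P_n\times S_n\subset\str(\{\ast\}\times S_n)$ is null-transverse, contradicting $\widetilde{\LB}(C_n)>0$.

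Next I set $A=\Crit_{\FF}(f)\cap(P_n\times S_n)$, which is Borel because $\Crit_{\FF}(f)=df^{-1}(\theta(X))$, and I record that for every $t\in S_n$ the fiber $A\cap(P_n\times\{t\})$ is infinite by the very definition of $\Crit_{\FF}^\infty(f)$. The core of the argument is to extract pairwise disjoint Borel transversals $\Sigma_1,\Sigma_2,\dots\subset A$, each meeting every plaque in at most one point, such that for each $t\in S_n$ infinitely many $\Sigma_k$ meet $P_n\times\{t\}$. When the fibers of $A$ over $S_n$ are countable, this is precisely the Luzin--Novikov decomposition of Proposition~\ref{p:Lusin1}: partition $A$ into countably many Borel pieces on which the projection $\pi$ is injective; since each fiber is (countably) infinite, infinitely many pieces hit each plaque over $S_n$.

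With the $\Sigma_k$ in hand, I finish by a counting computation. Each $\pi|_{\Sigma_k}$ is an injective measurable holonomy transformation onto $\pi(\Sigma_k)\subset T_n$, so $\LB(\Sigma_k)=\LB(\pi(\Sigma_k))$ by holonomy invariance; summing and applying Tonelli gives
\[
  \sum_k\LB(\Sigma_k)=\int_{T_n}\#\{\,k\mid t\in\pi(\Sigma_k)\,\}\,d\LB(t)=\infty,
\]
because the integrand is $\infty$ on the set $S_n$, which has $\LB(S_n)>0$. Since the $\Sigma_k$ are disjoint Borel transversals and $\widetilde{\LB}$ is a Borel measure agreeing with $\LB$ on transversals, countable additivity yields
\[
  \widetilde{\LB}(\Crit_{\FF}(f))\ge\widetilde{\LB}\Big(\bigcup_k\Sigma_k\Big)=\sum_k\widetilde{\LB}(\Sigma_k)=\sum_k\LB(\Sigma_k)=\infty.
\]

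The main obstacle is the extraction step in the presence of uncountable fibers: if a positive-measure set of plaques carries uncountably many critical points, Proposition~\ref{p:Lusin1} does not apply directly, and one must produce the disjoint Borel transversals by a measurable selection argument, choosing successive Borel sections that avoid the previously selected ones (these remain well defined since the fibers stay infinite after removing finitely many points). Alternatively, in this ``fat'' regime one might hope to sidestep the selection entirely by invoking property (i) of the coherent extension, since there $A$ fails to be carried by a null transversal and typically has nonempty leafwise boundary, which would force $\widetilde{\LB}(\Crit_{\FF}(f))=\infty$ in one stroke; verifying the boundary hypothesis is the delicate point of that alternative.
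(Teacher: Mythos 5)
Your localization step coincides with the paper's first move, and your justification that $\LB(S_n)>0$ via property (ii) of Proposition~\ref{p:coherentextension} is exactly the argument the paper leaves implicit. Where you diverge is the finishing move. The paper never extracts transversals: it applies the counting-measure representation of the coherent extension stated in the remark following Proposition~\ref{p:coherentextension},
\[
\widetilde{\LB}(B)=\int_{T}\#\bigl(B\cap(P\times\{t\})\bigr)\,d\LB(t)\;,
\]
to $B=\Crit_\FF(f)\cap W_i$ in a single chart. Since the integrand is $+\infty$ on the positive-measure set $\pi_i(\Crit_\FF^\infty(f)\cap W_i)$ \emph{regardless of whether the fibers are countably infinite or uncountable}, the conclusion is immediate. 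Your Lusin--Novikov decomposition plus holonomy invariance plus $\sigma$-additivity is, in the countable-fiber case, a correct hands-on re-derivation of this formula; it is a legitimate alternative route, just longer, and it buys some self-containedness (it avoids citing the integral representation, using only Proposition~\ref{p:Lusin1} and the defining invariance of $\LB$).

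The genuine gap is the uncountable-fiber case, which you flag but do not close, and it is squarely within the scope of the proposition: its role (see Remark~\ref{r:critinf}) is precisely to control critical sets \emph{before} any transversality assumption is imposed, so plaques on which $f$ is constant on an uncountable closed set, or on the whole plaque, must be covered. Your fix (a) --- successive Borel sections avoiding the previously chosen ones --- is not supplied by Proposition~\ref{p:Lusin1}, which requires countable fibers; producing a genuinely Borel (not merely universally measurable) selection for uncountable-fiber Borel sets requires $\sigma$-compactness of the fibers and an Arsenin--Kunugui-type uniformization theorem, in the spirit of Lemma~\ref{l:Kallman}. That machinery does work in finite dimension, where $\Crit_\FF(f)$ is leafwise closed and hence has $\sigma$-compact plaque intersections, but this section is set on measurable \emph{Hilbert} laminations, where closed bounded sets need not be $\sigma$-compact and projections of closed-fiber Borel sets can fail to be Borel, so the iteration is not justified by the paper's toolkit. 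Your fix (b) via property (i) of Proposition~\ref{p:coherentextension} breaks down exactly on leaves where $\Crit_\FF(f)\cap L$ is clopen in $L$, i.e.\ where $f$ is constant on the leaf, and those leaves would have to be treated separately. So, as written, your argument proves the proposition only when almost every plaque of $\Crit_\FF^\infty(f)$ carries countably many critical points; the paper's citation of the counting formula is the one-line mechanism that dispatches the general case (with the caveat, which the paper itself glosses, that the stated $\sigma$-compactness hypothesis of that formula is also delicate in the Hilbert setting).
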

\begin{proof}
For each chart $W_i$, let $\pi_i:W_i\to T_i$ be
the transverse projection. Since $\widetilde{\LB}(\Crit_\FF^\infty(f))>0$, we
have $\LB(\pi_i(\Crit_\FF^{\infty}(f)\cap W_i)>0$ for some $i\in\N$.
Therefore 
\begin{multline*}
\widetilde{\LB}(\Crit_\FF^\infty(f))\geq
\int_{\pi_i(\Crit_\FF^\infty(f)\cap W_i)}\#(\Crit_\FF(f)\cap
\pi_i^{-1}(t))\,d\LB(t)\\
=\infty\cdot\LB(\pi_i(\Crit_\FF^\infty(f)\cap W_i)) =\infty\;. \qed
\end{multline*}
\renewcommand{\qed}{}
\end{proof}

\begin{rem}\label{r:critinf}
The set $\Crit_\FF^\infty(f)$ contains all non-isolated critical
points of $\Crit_\FF(f)$. If $\widetilde{\LB}(\Crit(f))<\infty$,
then the saturation of $\Crit_\FF^\infty(f)$ is a null-transverse
set. Hence we can restrict our study to the case where all
critical points are isolated.
\end{rem}

The definition of a Palais-Smale condition is needed for a version of the Lusternik-Schnirelmann Theorem on Hilbert manifolds. For measurable Hilbert laminations, it could be adapted by taking functions that satisfy the Palais-Smale condition on all (or almost all) leaves. But this is very restrictive because it would mean that the set of relative minima meets each leaf in a relatively compact set (which is non-empty when $f$ is bounded from below), and therefore there would exist a complete transversal meeting each leaf at one point. Thus, instead, we use the following weaker condition.

\begin{defn}\label{d:PSmeasurable}
A measurable {\em $\omega$-Palais-Smale\/} (or {\em $\omega$-PS\/}) function is a function $f\in
C^{2}(\FF)$ such that any $\phi$-orbit have non empty $\omega$-limit and , for any $p\in\Crit_\FF(f)$, the set $\{\,x\in\Crit_\FF(f)\mid p\ll x\,\}$ is compact, and this set is empty if and only if $p$ is a relative minimum. A measurable {\em $\alpha$-Palais-Smale\/} (or {\em $\alpha$-PS\/}) function is defined analogously by considering the set $\{\,x\in\Crit_\FF(f)\mid x\ll p\,\}$.
\end{defn}

Of course, $f$ is $\omega$-PS if and only if $-f$ is $\alpha$-PS.

\begin{lemma}\label{pr:measurableflow}
Suppose that $\Crit_\FF(f)$ is an isolated transversal. The modified gradient flow $\phi$ \upn{(}see Example~\ref{e:measurable tangential isotopy}\upn{)}
satisfies the following properties:
\begin{itemize}
  \item[(i)] The flow runs towards lower level sets of $f$, i.e., $f(p)\geq f(\phi_t(p))$ for $t>0$.

  \item[(ii)]The invariant points of the flow are just the critical points of $f$.

  \item[(iii)] A point is critical if and only if $f(\phi_t(p))=f(p)$ for some $t\neq 0$.

  \item[(iv)] The points in the $\alpha$- and $\omega$-limits are critical points if they are non empty.

\end{itemize}
\end{lemma}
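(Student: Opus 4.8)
The plan is to reduce everything to a leafwise statement and then run the standard monotonicity argument for a modified gradient flow. Since $V=-\alpha(|\nabla f|)\,\nabla f$ is tangent to the leaves, the restriction of $\phi$ to each leaf $L$ is precisely the modified gradient flow of the $C^2$ function $f|_L$ on the complete Hilbert manifold $L$; thus it suffices to verify (i)--(iv) on a single leaf, where $\phi^p(t)=\phi_t(p)$ is a genuine integral curve. For (i), differentiating along an orbit gives
$$\frac{d}{dt}f(\phi_t(p))=df\bigl(V(\phi_t(p))\bigr)=-\alpha(|\nabla f|)\,\langle\nabla f,\nabla f\rangle=-\alpha(|\nabla f|)\,|\nabla f|^2\le 0\,,$$
where the right-hand side is evaluated at $\phi_t(p)$ and the inequality holds because $\alpha>0$ everywhere. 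Hence $t\mapsto f(\phi_t(p))$ is non-increasing, giving $f(p)\ge f(\phi_t(p))$ for $t>0$. For (ii), the fixed points of the flow are exactly the zeros of $V$; since $\alpha>0$, one has $V(p)=0$ if and only if $\nabla f(p)=0$, i.e. if and only if $df(p)=0$, which is the condition $p\in\Crit_\FF(f)$.

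Property (iii) follows from (i) and (ii). If $p$ is critical, then by (ii) it is fixed, so $f(\phi_t(p))=f(p)$ for every $t$, in particular for some $t\ne 0$. Conversely, suppose $f(\phi_t(p))=f(p)$ for some $t\ne 0$. On the compact interval $J$ with endpoints $0$ and $t$, the non-increasing function $s\mapsto f(\phi_s(p))$ takes equal values at both endpoints and is therefore constant on $J$, so its derivative $-\alpha(|\nabla f|)\,|\nabla f|^2$ vanishes identically on $J$. As $\alpha>0$, this forces $\nabla f(\phi_s(p))=0$ for all $s\in J$; evaluating at $s=0$ shows $p\in\Crit_\FF(f)$.

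For (iv), let $q\in\omega(p)$ be a point of the (nonempty) $\omega$-limit, realized as $q=\lim_n\phi_{t_n}(p)$ with $t_n\to\infty$; here the limit is taken in the leaf $L_p$, whose completeness guarantees $q\in L_p$. By (i) the function $g(t)=f(\phi_t(p))$ is non-increasing, and continuity of $f$ gives $g(t_n)\to f(q)$; a monotone function possessing a convergent subsequence has a limit equal to that subsequential value, so $\lim_{t\to\infty}g(t)=f(q)$. Fix $s\in\R$. Leafwise continuity of $\phi_s$ applied to $\phi_{t_n}(p)\to q$ yields $\phi_{t_n+s}(p)=\phi_s(\phi_{t_n}(p))\to\phi_s(q)$, and since $t_n+s\to\infty$ we get $f(\phi_s(q))=\lim_n g(t_n+s)=f(q)$. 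Thus $f$ is constant along the orbit of $q$, and (iii) shows $q\in\Crit_\FF(f)$; the argument for the $\alpha$-limit is identical with $t_n\to-\infty$ and $g$ non-decreasing as $t\to-\infty$.

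\emph{The main obstacle} is making sense of the limits in (iv) within the measurable Hilbert setting: they must be taken in the complete leaf metric rather than in any ambient topology, so that $q$ lies in $L_p$ and the leafwise $C^1$ (hence continuous) dependence of $\phi_s$ can legitimately be applied to the sequence $\phi_{t_n}(p)$. Global existence of $\phi$ for all $t$, which underlies the whole orbit analysis, is guaranteed by the boundedness of $|V|=\alpha(|\nabla f|)\,|\nabla f|$ built into the choice of $\alpha$ together with completeness of the leafwise metric \cite{Schwartz}; the remaining statements are purely orbitwise and use only that $\alpha$ is strictly positive.
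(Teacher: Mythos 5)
Your proof is correct and takes essentially the same approach as the paper: reduce to each leaf, where $\phi$ restricts to the modified gradient flow of $f|_L$ on a complete Hilbert manifold, and run the standard monotonicity argument --- the paper simply cites Schwartz for (i)--(iii), where you spell out the computation. Your argument for (iv) (constancy of $f$ along the orbit of a limit point, then apply (iii)) is the same one the paper invokes via ``(i), (ii) and (iii)'', with the minor bonus that it never uses the hypothesis that $\Crit_\FF(f)$ is an isolated transversal, which the paper needs only for the further conclusion that the limit sets are single critical points.
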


\begin{proof}
These properties can be proved in each leaf, considered as a $C^2$
Hilbert manifold, where (i),~(ii) and~(iii) follow from the work of J.~Schwartz \cite{Schwartz}.

Under these conditions, the $\alpha$- and $\omega$-limits are
connected sets that consist of critical points if they are non-empty (by using (i),~(ii) and~(iii)). If $\omega(p)$ is infinite, then all of its points
are non-isolated, contradicting the assumption.
\end{proof}

\begin{defn}[Critical sets]\label{d:partcrit}
For a measurable $\omega$-PS function, the set of minima is non-empty in any leaf. Let $p$ be a critical point. By the properties of the flow $\phi$, either $p$ is a relative minimum, or there
exists another critical point $x$ such that $p<x$. Define $M_0,M_1,\dots$ inductively by
\begin{align*}
M_0&=\{\,x\in \Crit_\FF(f)\mid\not\exists\ y\ \text{such that}\ x\ll y\,\}\;,\\
M_i&=\{\,x\in\Crit_\FF(f)\mid\forall y\,\ x\ll y\ \Rightarrow\ y\in M_0\cup\dots\cup M_{i-1}\,\}\;.
\end{align*}
Clearly, $M_0$ contains all relative minima on the leaves. We also
set $C_0(f)=M_0$ and $C_i(f)=M_i\setminus(M_0\cup\dots\cup
M_{i-1})$. Observe that, if $x\in\omega(p)$ and $x\in C_i(f)$ for
some $i$, then $\omega(p)\subset C_i(f)$. There is an analogous
property for the $\alpha$-limit. The notation $C_i$ will be used
if there is no confusion. Let $p\ll p^*$. Then $i_{p^*}< i_p$,
where $i_{p}$ and $i_{p^*}$ are the indexes such that $p\in
C_{i_p}$ and $p^*\in C_{i_{p^*}}$.
\end{defn}

The set of relative minima of a bounded from below measurable $\omega$-PS function is always non-empty in any leaf.

\begin{thm}\label{t:MeasTanCrit}
Let $(X,\FF)$ be a measurable Hilbert lamination endowed with a
measurable Riemannian metric on the leaves, and let $f$ be a measurable $\omega$-PS function on $X$. Suppose that $\Crit_\FF(f)$ is an isolated transversal. Then
$\Cat(\FF)\leq \#\{\text{critical sets of}\ f\}\;.$
\end{thm}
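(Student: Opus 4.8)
The plan is to mimic the classical Lusternik--Schnirelmann argument via the modified gradient flow, but to work tangentially and measurably, covering $X$ by one categorical measurable open set for each critical set $C_i(f)$. The heart of the matter is to show that each critical set $C_i(f)$ is contained in a measurable \emph{categorical} open set, and then that finitely many such sets cover $X$.

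First I would fix the modified gradient flow $\phi$ from Example~\ref{e:measurable tangential isotopy}; by Remark~\ref{r:critinf} we may assume all critical points are isolated, so by hypothesis $\Crit_\FF(f)$ is an isolated transversal, and by Lemma~\ref{pr:measurableflow} the flow descends $f$-values, fixes exactly the critical points, and has $\alpha$- and $\omega$-limits consisting of critical points. Next, starting from $C_0(f)$ (the relative minima, together with the critical points $p$ having no $x$ with $p\ll x$), I would use Lemma~\ref{l:lowaproxim} to find a measurable open set $U_0\supset C_0(f)$ with $\Cat(U_0,\FF)$ controlled and tangentially contracting to $C_0(f)$. Then, inductively on $i$, I would enlarge a small tubular neighborhood $V_i$ of $C_i(f)$ using the flow $\phi$: the \emph{stable/unstable swelling} $\bigcup_{t}\phi_t(V_i)$ captures exactly the orbits whose relevant limit lands in $C_i(f)$. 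Because of the index inequality $i_{p^*}<i_p$ for $p\ll p^*$ established at the end of Definition~\ref{d:partcrit}, the flow carries the complement of the lower strata into the swelling of $C_i(f)$, so these flow-saturated neighborhoods of the critical sets cover $X$. Each such open set deformation-retracts (via $\phi$ composed with the local contraction of the tube) to the transversal $C_i(f)$, hence is $\FF$-categorical; this uses Definition~\ref{d:comphomot} to concatenate the flow deformation with the tubular contraction of Lemma~\ref{l:lowaproxim}, and Proposition~\ref{p:completetransversal} to land in a transversal.

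The key steps in order are: (1) reduce to isolated critical points via Remark~\ref{r:critinf}; (2) build, for each critical set $C_i(f)$, an isolating tubular neighborhood that is measurable and contractible to $C_i(f)$, via Lemma~\ref{l:lowaproxim}; (3) flow-saturate each tube, using $\phi$ to sweep points with the appropriate $\omega$-limit into it, and verify measurability of the resulting open set (invoking Proposition~\ref{p:measurability} for the image of the deformation); (4) check that the finitely many resulting categorical open sets cover $X$, which is where the $\omega$-PS compactness condition in Definition~\ref{d:PSmeasurable} and the filtration structure of Definition~\ref{d:partcrit} do the real work; and (5) conclude $\Cat(\FF)\leq\#\{\text{critical sets}\}$ by the definition of the category as the minimal number of categorical open sets in a cover.

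The main obstacle I anticipate is step (3)--(4): ensuring that the flow-swollen neighborhoods of the critical sets are genuinely measurable open sets, remain $\FF$-categorical after the swelling, and together cover \emph{all} of $X$. The compactness built into the $\omega$-PS condition (the set $\{x\in\Crit_\FF(f)\mid p\ll x\}$ is compact) is precisely what prevents an orbit from ``escaping to infinity'' along a leaf and guarantees a non-empty $\omega$-limit lying in some critical set, so that every point of $X$ is attracted into exactly one $C_i(f)$-swelling; this is the measurable, non-locally-compact analogue of the classical deformation lemma, and verifying it carefully -- together with the measurability of the swollen set, which should follow from the measurability of $\phi$ and the coherent-extension machinery -- is the delicate part. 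The finiteness of the number of critical sets makes the covering finite, which is exactly what the category requires.
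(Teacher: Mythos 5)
There is a genuine gap at your step (3), and it is precisely the point where the paper's proof does its real work. For an intermediate critical set $C_i(f)$ with $i\geq 1$, the flow-saturation $\bigcup_t\phi_t(V_i)$ of a tube $V_i$ around $C_i(f)$ does \emph{not} ``capture exactly the orbits whose relevant limit lands in $C_i(f)$'': it contains every transit orbit that merely enters $V_i$ at some time, and such orbits flow past $C_i(f)$ toward lower critical sets. Consequently the swollen set does not deformation-retract onto $C_i(f)$ via $\phi$. One cannot repair this by replacing the saturation with the true stable set $\{x\mid \omega(x)\subset C_i(f)\}$ either: that set is not open (it is a leafwise ``separatrix''-type set), and the limit map $x\mapsto\omega(x)$ is not leafwise continuous near saddle-type critical points, so ``flow to the $\omega$-limit'' is not an $\FF$-contraction. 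The compactness in the $\omega$-PS condition only guarantees that $\omega$-limits exist and land in the critical sets; it says nothing about openness of basins or continuity of the limit map, so it cannot carry the weight you assign to it in steps (3)--(4).

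The paper's proof circumvents this with a stratified induction that your proposal lacks. The backward saturation $U'_0=\bigcup_n\phi_{-n}(U_0)$ \emph{is} open and contracts to $C_0(f)$, but only because $C_0(f)$ consists of relative minima. Then $X_1=X\setminus U'_0$ is closed, $C_1(f)$ is the set of relative minima of $f|_{X_1}$, and the backward saturation $F'_1=\bigcup_n\phi_{-n}(U_1\cap X_1)$ taken \emph{inside the stratum} $X_1$ is closed in $X$, hence not yet usable as a member of an open categorical cover. The substantive step is to enlarge $F'_1$ to a measurable open set $U'_1$ admitting a deformation into a categorical neighborhood $\widetilde{U}_1$ of $C_1(f)$: the paper covers $F'_1\setminus C_1(f)$ by tubular charts of the one-dimensional flow foliation (Lemma~\ref{l:measurable tubular neighborhoods}), chooses a measurable partition of unity $\{\lambda_n\}$, and defines a reparametrized flow $H(x,t)=\phi_{s(x,t)}(x)$ with $s(x,t)=\sum_{k}\lambda_k(x)\,r(x,t,k)$, where $r(x,t,k)$ is chosen so that nearby points shadow the deformation of the flow lines lying in $F'_1$; a final telescopic argument shows that $U'_1=V'_1\cup\widetilde{U}_1$ is $\FF$-categorical. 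The induction then proceeds with $X_n=X\setminus(U'_0\cup\bigcup_{i<n}F'_i)$, on which $C_n(f)$ is again the minimum set. Your outline identifies the delicate point but proposes a construction (naive flow-swelling plus flow retraction) that fails exactly there; without the stratification and the partition-of-unity reparametrization (or some substitute), the argument does not go through.
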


\begin{thm}\label{t:LSMedFol}
Let $(X,\FF,\LB)$ be a measurable Hilbert lamination endowed with a
measurable Riemannian metric on the leaves and with a transverse
invariant measure, and let $f$ be a measurable $\omega$-PS function on $X$. Then
$\Cat(\FF,\LB)\leq\widetilde{\LB}(\Crit_\FF(f))$.
\end{thm}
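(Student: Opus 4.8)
The plan is to mirror the argument for the unweighted bound in Theorem~\ref{t:MeasTanCrit}, but to replace the \emph{counting} of critical sets by their \LB-measure, the bridge being the estimate $\tau_\LB(U_i)\le\LB(C_i)$ for each of the categorical open sets produced there. First I would dispose of the trivial case: if $\widetilde{\LB}(\Crit_{\FF}(f))=\infty$ there is nothing to prove, so assume it is finite. Then, by Remark~\ref{r:critinf}, the saturation of $\Crit_{\FF}^\infty(f)$ is a null-transverse set, and Proposition~\ref{p:nullset} lets me discard it and compute $\Cat(\FF,\LB)$ on its complement; thus I may assume that every critical point is isolated, so that $\Crit_{\FF}(f)$ is an isolated transversal and $\widetilde{\LB}(\Crit_{\FF}(f))=\LB(\Crit_{\FF}(f))$. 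Under this hypothesis the critical sets $C_0,C_1,\dots$ of Definition~\ref{d:partcrit} are measurable isolated transversals partitioning $\Crit_{\FF}(f)$, so $\sigma$-additivity of \LB\ gives $\sum_i\LB(C_i)=\LB(\Crit_{\FF}(f))$.

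Next I would invoke the construction underlying Theorem~\ref{t:MeasTanCrit}: the modified gradient flow of Example~\ref{e:measurable tangential isotopy} produces, for each critical set $C_i$, a categorical measurable open set $U_i$ (its basin) such that the family $\{U_i\}$ covers $X$---every $\phi$-orbit has nonempty $\omega$-limit lying in some $C_i$---and such that $U_i$ admits a measurable tangential contraction whose time-$1$ image is contained in $C_i$. The contraction is the composition (in the sense of Definition~\ref{d:comphomot}) of the reparametrized flow, which carries $U_i$ into the tubular neighborhood of $C_i$ furnished by Lemma~\ref{l:lowaproxim}, followed by the radial contraction of that tube onto $C_i$.

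The measure estimate is then immediate in principle: since this contraction $H$ satisfies $H(U_i\times\{1\})\subset C_i$ and $C_i$ is a transversal, we get $\tau_\LB(U_i)\le\widetilde{\LB}(H(U_i\times\{1\}))\le\widetilde{\LB}(C_i)=\LB(C_i)$, whence $\Cat(U_i,\FF,\LB)\le\tau_\LB(U_i)\le\LB(C_i)$. Combining this with the subadditivity of the relative \LB-category (Proposition~\ref{p:subaddtivity of the relative LB-category}) and the fact that the $U_i$ cover $X$ yields
\[
  \Cat(\FF,\LB)=\Cat(X,\FF,\LB)\le\sum_i\Cat(U_i,\FF,\LB)\le\sum_i\LB(C_i)=\LB(\Crit_{\FF}(f))=\widetilde{\LB}(\Crit_{\FF}(f))\;,
\]
which is the desired inequality.

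The hard part will be making rigorous the contraction whose time-$1$ image actually lands inside $C_i$ rather than merely accumulating on it. The gradient flow reaches critical points only asymptotically, so the composed deformation must be reparametrized by a (measurable) first-hitting-time function recording when each orbit enters the tube of Lemma~\ref{l:lowaproxim}; one must check that the resulting map is still an MT-map, $C^1$ along leaves, and that its time-$1$ image is genuinely a subset of $C_i$, using properties (i)--(iv) of the flow in Lemma~\ref{pr:measurableflow} together with the $\omega$-PS hypothesis to control the limit behaviour. Measurability of the final image is guaranteed by Proposition~\ref{p:measurability}, but the variable hitting times and the gluing across the different levels of the ordering $\ll$ are the genuinely delicate points; most of this, however, is already carried out in the proof of Theorem~\ref{t:MeasTanCrit}, which I would quote rather than repeat.
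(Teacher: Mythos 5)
Your proposal is correct and follows essentially the same route as the paper: reduce to the case where $\Crit_\FF(f)$ is an isolated transversal via Remark~\ref{r:critinf}, reuse the covering by categorical basins constructed in the proof of Theorem~\ref{t:MeasTanCrit} together with the estimate $\tau_\LB(U_i)\le\LB(C_i)$ coming from Lemma~\ref{l:lowaproxim}, and conclude by subadditivity since the critical sets $C_i$ form a measurable partition of $\Crit_\FF(f)$. The paper's own proof is exactly this, stated in three sentences by quoting the previous construction, so your more explicit write-up (including the infinite-measure trivial case and the hitting-time caveat) is just a fuller account of the same argument.
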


\begin{rem}
Notice also that Theorem~\ref{t:MeasTanCrit} gives a slight sharpening of the classical theorem of Lusternik-Schnirelmann, since the number of critical sets may be finite even when the number of all critical points are infinite. We see this in the following example. Let $f:\R\to\R$, $f(x)=\sin(x)$, which is not an Palais-Smale function in the classical sense, but it satisfies our measurable Palais-Smale condition in the one leaf lamination $\R$. An easy computation shows that there are two critical sets, the set of relative minima and the set of relative maxima, yielding the inequality $\Cat(\R)\leq 2$. On more complicated examples, this improved version could be used to find better upper bounds of the classical LS category.
\end{rem}

\begin{rem}\label{r:medtube}
From the existence of a measurable Riemannian metric, two disjoint
isolated transversals, can be separated by measurable
open sets. In fact, we can suppose that the closures of the connected
components of these measurable open sets contain only one point of
these transversals (see
Proposition~\ref{p:tubeisolatedtransversal}).
\end{rem}

\begin{lemma}\label{l:measurableconvergency}
Let $(X,\FF)$ be a measurable Hilbert lamination and let
$f_n:(X,\FF)\to(X,\FF)$ be a sequence of MT-maps. Suppose that
$(f_n(x))$ converges for all $x\in X$. Then $\lim_n f_n$ is measurable.
\end{lemma}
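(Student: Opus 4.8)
The plan is to reduce the measurability of the pointwise limit to the measurability of the individual maps $f_n$ together with a standard fact from descriptive set theory: a pointwise limit of measurable maps into a standard Borel space is measurable. The subtlety here is that the target $(X,\FF)$ is an MT-space, so ``measurable'' refers to its ambient $\sigma$-algebra, and I must check that this $\sigma$-algebra is countably generated and separates points so that the classical theorem applies. Since the atlas is countable, $X$ is a standard space (as noted in the section on MT-spaces and measurable laminations), and its ambient $\sigma$-algebra is the Borel $\sigma$-algebra of a standard Borel space. Thus the target has exactly the structure needed.

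First I would fix a countable family $\{A_k\}_{k\in\N}$ of measurable sets generating the ambient $\sigma$-algebra of $X$; because $X$ is standard, I may take the $A_k$ to be a countable base for a compatible Polish topology (or at least a separating family of Borel sets closed under the relevant operations). Let $g=\lim_n f_n$, which exists pointwise by hypothesis. The key step is to express the preimage $g^{-1}(C)$, for $C$ ranging over a generating family, in terms of the $f_n^{-1}(\cdot)$ using the topology on the target. Concretely, for a point $x$, the value $g(x)$ lies in an open set $O$ if and only if eventually $f_n(x)$ lies in a slightly smaller closed neighborhood; writing this in terms of a countable base $\{V_j\}$ of a metric $d$ compatible with the Polish structure, I would use the standard identity
\[
g^{-1}(V_j)=\bigcup_{r}\bigcup_{N}\bigcap_{n\geq N} f_n^{-1}\bigl(\{y : d(y,X\setminus V_j)>1/r\}\bigr)\;,
\]
where $r$ and $N$ range over $\N$. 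Each set on the right is measurable because each $f_n$ is an MT-map (hence measurable) and the sets $\{y:d(y,X\setminus V_j)>1/r\}$ are Borel in the target; the countable unions and intersections keep us inside the ambient $\sigma$-algebra. Since the $V_j$ generate the Borel $\sigma$-algebra of the standard target, this shows $g^{-1}$ of every generating set is measurable, so $g$ is measurable.

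The step I expect to be the main obstacle is purely the bookkeeping of identifying the correct compatible Polish metric $d$ on the target so that the displayed formula is valid; the formula itself is the classical argument that a pointwise limit of Borel maps between metric spaces is Borel, but I must be careful that the \emph{ambient} $\sigma$-algebra of $(X,\FF)$ agrees with the Borel $\sigma$-algebra of this metric. This is guaranteed by the standing assumption that the atlas is countable, which forces $X$ to be standard, so in a standard Borel space any two compatible Polish topologies induce the same Borel $\sigma$-algebra. Once this identification is in place, everything reduces to the classical lemma, and no further properties of the lamination structure (leaves, transversals, the metric on the leaves) are needed — only that source and target are standard Borel spaces and that each $f_n$ is Borel measurable in the ambient sense.
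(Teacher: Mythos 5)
Your proposal has a genuine gap, and it is precisely the point that the paper's remark immediately following the lemma warns about. The hypothesis ``$(f_n(x))$ converges for all $x$'' refers to convergence in the topology of the MT-space $(X,\FF)$, i.e.\ the \emph{leaf} topology (this is also how the lemma is applied: $\alpha=\lim_n\phi_{-n}$ and $\omega=\lim_n\phi_n$ converge along flow lines inside leaves). Your displayed identity
\[
g^{-1}(V_j)=\bigcup_{r}\bigcup_{N}\bigcap_{n\geq N} f_n^{-1}\bigl(\{\,y : d(y,X\setminus V_j)>1/r\,\}\bigr)
\]
characterizes the limit with respect to the metric $d$, so it is valid only if $f_n(x)\to g(x)$ in $d$. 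The hypothesis gives convergence in the leaf topology, and for an arbitrary Polish topology compatible with the ambient $\sigma$-algebra there is no implication from leaf-topology convergence to $d$-convergence. The fact you invoke --- that any two compatible Polish topologies on a standard Borel space have the same Borel sets --- is true but irrelevant here, because the leaf topology is \emph{not} a compatible Polish topology: its Borel $\sigma$-algebra is strictly larger than the ambient $\sigma$-algebra, and it is not separable when there are uncountably many leaves. This is exactly what the paper's remark means by saying the lemma is not a direct consequence of the classical limit theorem. Your closing claim that ``no further properties of the lamination structure are needed'' is therefore the step that fails; to repair your route you would need a compatible Polish topology \emph{coarser} than the leaf topology (so that leafwise limits are also $d$-limits), and producing such a topological realization of an abstract measurable lamination is a nontrivial problem that your proposal does not address.

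The paper circumvents this by running the same classical scheme, but with respect to the leaf topology itself: it takes as generating sets the foliated charts $V\cong B\times T$, exhausts each one by measurable \emph{leafwise closed} sets $F_N=\overline{B_N}\times T$ with $\overline{B_N}\subset B$, and verifies
\[
\Bigl(\lim_n f_n\Bigr)^{-1}(V)=\bigcup_{N=1}^{\infty}\bigcap_{n\geq N}f_n^{-1}(F_N)\;,
\]
which uses only that leaves are open in the leaf topology (so a convergent sequence eventually lies in the leaf of its limit and converges there), that each $F_N$ meets every leaf in a leafwise closed set, and that each $f_n^{-1}(F_N)$ is measurable. These chart-level leafwise closed sets are the correct substitute for your metric sets $\{\,y:d(y,X\setminus V_j)>1/r\,\}$, and they are the ingredient missing from your argument.
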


\begin{proof}
Measurable open sets generate the \sg-algebra of
$(X,\FF)$, in fact, by Theorem~\ref{t:srivastava} the measurable foliated charts are a generating set also. Then it is enough to prove that $(\lim_n f_n)^{-1}(V)$ is
measurable for any foliated chart $V$. For each $V\equiv B\times T$ there exists a sequence of measurable closed sets $\{F_n\}_{n\in\N}$ such that $V=\bigcup_n F_n$. For instance, if $B$ is an open ball we can take $F_n\equiv \overline{B_n}\times T$, where $B_n$ are open balls of smaller radius than $B$ but converging to it. Now, it is clear that
\begin{align*}
  (\lim_n f_n)^{-1}(V)&
  =\{\,x\in X\mid\exists N\ \text{such that}\ f_n(x)\in F_N\ \forall n\geq
N\,\}\\
&=\bigcup_{N=1}^\infty\bigcap_{n\geq N}f^{-1}_n(F_N)\;,
\end{align*}
 which is, clearly, a measurable set.
\end{proof}

\begin{rem}
It is known, in basic measure theory, that the limit of real measurable functions is also measurable. Lemma~\ref{l:measurableconvergency} is not a direct consequence from this fact because the measurable structure is not the Borel \sg-algebra corresponding to the topology.
\end{rem}

\begin{cor}\label{c:partition}
The family $\{C_i\}$ is a measurable partition of $\Crit_\FF(f)$.
\end{cor}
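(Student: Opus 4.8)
The plan is to treat the two assertions---that the $C_i$ cover $\Crit_\FF(f)$ and are pairwise disjoint, and that each $C_i$ is measurable---separately, the first being combinatorial and the second resting on the measurability of the flow limits. Throughout I use the standing assumptions of the section, so $f$ is $\omega$-PS and $\Crit_\FF(f)$ is an isolated transversal.

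First I would record the two flow-limit maps. Since $f$ is $\omega$-PS and its critical points are isolated, for every $p\in X$ the orbit $\phi_t(p)$ has a non-empty $\omega$-limit which, being a connected set of isolated critical points (Lemma~\ref{pr:measurableflow}), is a single point; write $\omega_{\infty}(p)=\lim_{t\to\infty}\phi_t(p)\in\Crit_\FF(f)$. Applying Lemma~\ref{l:measurableconvergency} to the MT-maps $\phi_n$ ($n\in\N$) shows that $\omega_{\infty}$ is measurable. The backward limit $\alpha_{\infty}(p)=\lim_{t\to-\infty}\phi_t(p)$ is defined on the set $D=\{\,p\mid(\phi_{-n}(p))_n\text{ converges}\,\}$, which is measurable by the Cauchy criterion (using that the leafwise metric and each $\phi_t$ are measurable), and on $D$ the same argument gives a single critical value and the measurability of $\alpha_{\infty}$.

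The main step is to show that the graph $G_{<}=\{\,(x,y)\in\Crit_\FF(f)^{2}\mid x<y\,\}$ of the relation $<$ of Definition~\ref{d:partcrit} is measurable. By definition $x<y$ means that there is a regular point $p$ with $\alpha_{\infty}(p)=x$ and $\omega_{\infty}(p)=y$; hence $G_{<}$ is the image of the measurable set $D\setminus\Crit_\FF(f)$ under the measurable map $p\mapsto(\alpha_{\infty}(p),\omega_{\infty}(p))$, and so is analytic. Now for each fixed $y$ the section $\{\,x\mid x<y\,\}$ is contained in $\Crit_\FF(f)\cap L_y$, which is countable because the critical points are isolated in the second countable leaf $L_y$. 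An analytic subset of a product all of whose sections in one variable are countable is Borel \cite{Srivastava}; applied to $G_{<}$ this yields that $G_{<}$ is measurable. For a measurable $Z\subseteq\Crit_\FF(f)$ put $<^{-1}(Z)=\{\,x\mid\exists y\in Z,\ x<y\,\}$; this is the projection to the first factor of the measurable set $G_{<}\cap(\Crit_\FF(f)\times Z)$, whose vertical sections $\{\,y\mid x<y,\ y\in Z\,\}$ are finite, since they lie in the compact, hence finite, set $\{\,y\mid x\ll y\,\}$ of Definition~\ref{d:PSmeasurable}. By Proposition~\ref{p:Lusin1} this projection is measurable, so $<^{-1}$ preserves measurability. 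As $\ll$ is the transitive closure of $<$, we have $\ll^{-1}(Z)=\bigcup_{k\ge1}(<^{-1})^{k}(Z)$, which is again measurable.

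With these tools the sets $M_i$ are measurable by induction on $i$. Writing $G_{i-1}=M_0\cup\dots\cup M_{i-1}$, the definition of $M_i$ reads $M_i=\Crit_\FF(f)\setminus\ll^{-1}(\Crit_\FF(f)\setminus G_{i-1})$; granting $G_{i-1}$ measurable, with base case $M_0=\Crit_\FF(f)\setminus\ll^{-1}(\Crit_\FF(f))$, measurability of $M_i$ follows from the previous paragraph, and hence each $C_i=M_i\setminus G_{i-1}$ is measurable. For the partition property, disjointness is immediate from $M_{i-1}\subseteq M_i$ and $C_i=M_i\setminus M_{i-1}$; for the covering, to each $x$ assign the length $n(x)$ of the longest $\ll$-chain issuing from $x$, finite because $\{\,y\mid x\ll y\,\}$ is finite, and check by induction on $n(x)$ that $x\in M_{n(x)}$ (if $x\ll y$ then $n(y)<n(x)$, so $y\in G_{n(x)-1}$), whence $\bigcup_iC_i=\Crit_\FF(f)$. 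I expect the only genuine difficulty to be the measurability of $G_{<}$: the existential quantifier over flow lines makes $G_{<}$ only obviously analytic, and upgrading this to measurability needs both the countability of its sections (from isolation of critical points in a leaf) and the descriptive set-theoretic fact cited above; everything downstream is a formal consequence via Proposition~\ref{p:Lusin1} and countable Boolean operations.
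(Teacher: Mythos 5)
Your reduction of the whole statement to the measurability of the graph $G_{<}=\{\,(x,y)\in\Crit_\FF(f)^2\mid x<y\,\}$ is a coherent strategy, and you correctly identify it as the crux; but the descriptive-set-theoretic fact you invoke to close it is false, and this is a genuine gap. An analytic set with countable (even singleton) sections need not be Borel: if $C$ is an analytic non-Borel subset of a Polish space $P$, then $A=\{\,(x,x)\mid x\in C\,\}\subset P\times P$ is analytic, every vertical and every horizontal section of $A$ is a singleton or empty, and yet $A$ is not Borel (otherwise $C=\delta^{-1}(A)$ would be Borel, where $\delta(x)=(x,x)$). The correct statement in \cite{Srivastava} (Lusin--Novikov) requires the set to be Borel to begin with: a \emph{Borel} set with countable sections is a countable union of Borel graphs and has Borel projections. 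Your $G_{<}$ is only known to be analytic: it is the forward image of $D\setminus\Crit_\FF(f)$ under $p\mapsto(\alpha_\infty(p),\omega_\infty(p))$, a map whose fibers are unions of flow lines joining $x$ to $y$ and hence typically uncountable, so neither Proposition~\ref{p:Lusin1} nor any countable-section theorem applies. Consequently the measurability of $G_{<}$, and with it that of $\ll^{-1}(Z)$, of the $M_i$ and of the $C_i$, is not established; everything downstream of this step collapses.

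The paper's proof takes a different, more hands-on route that never needs the full order relation to be measurable. It uses the same limit maps $\alpha=\lim_n\phi_{-n}$ and $\omega=\lim_n\phi_n$, measurable by Lemma~\ref{l:measurableconvergency}, observes that $\Crit_\FF(f)\setminus C_0=\alpha(X\setminus\Crit_\FF(f))$, so that $C_0$ is measurable, and then proceeds inductively: by Remark~\ref{r:medtube} it takes a measurable open set $U_0$ separating $C_0$ from the other critical points, saturates it under the backward flow, $O_0=\bigcup_{n}\phi_{-n}(U_0)$, and identifies $\Crit_\FF(f)\setminus(C_0\cup C_1)$ as $\alpha(X\setminus(O_0\cup\Crit_\FF(f)))$, and so on. Note that even this argument rests on the measurability of $\alpha$-images of certain measurable sets (the fibers of $\alpha$ are unstable sets, so this is itself delicate), but it confines the difficulty to images of explicit flow-saturated sets rather than to the graph of the relation $<$. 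To repair your argument you would need an actual proof that $G_{<}$ is measurable, for instance by exhibiting the fibers of $p\mapsto(\alpha_\infty(p),\omega_\infty(p))$, in suitable charts, as $\sigma$-compact and invoking Kallman's Lemma~\ref{l:Kallman}; the countable-section shortcut is simply not a theorem for analytic sets.
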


\begin{proof}
Clearly, this family is a partition by
the properties of $\phi$. To show that each $C_i$ is
measurable, observe that $\Crit_\FF(f)$ is a transversal, and
$X\setminus \Crit_\FF(f)$ is measurable with open intersection with each leaf. The
 $\alpha$- and $\omega$-limit functions, $\alpha,\omega:X\to\Crit_\FF(f)$, are measurable by
Lemma~\ref{l:measurableconvergency} since $\alpha=\lim_n\phi_{-n}$
and $\omega=\lim_n\phi_n$. Observe that
$\alpha(X\setminus\Crit_\FF(f))=\Crit_\FF(f)\setminus C_0$.
Therefore $C_0$ is measurable. By Remark~\ref{r:medtube}, there
exists a measurable open set $U_0$ containing $C_0$ and separating
it from $\Crit_\FF(f)\setminus C_0$; in fact, each connected
component contains only one point of $C_0$. Take the measurable
open set $O_0=\bigcup_{n\in\N}\phi_{-n}(U_0)$. It is easy to
see that
$$\alpha(X\setminus(O_0\cup\Crit_\FF(f)))=\Crit_\FF(f)\setminus
(C_0\cup C_1)\;.$$ Therefore $C_1$ is a measurable set. By a
recursive argument, we obtain that all sets $C_i$ are measurable.
\end{proof}

\begin{lemma}\label{l:measurable tubular neighborhoods}
Let $(X,\FF)$ be a measurable Hilbert lamination and let $(Y,\GG)$ be a finite dimensional lamination such that $Y\subset X$ is a measurable open  set and the inclusion map is an MT-embedding considering in $Y$. Then there exists a countable family of measurable foliated charts of $(X,\FF)$, $\{U_n\equiv B_n\times T_n\}_{n\in\N}$, covering $Y$ and such that each fiber $B_n\times\{t\}$ is a foliated chart (in a topological sense) of $\GG$ in a leaf of $\FF_Y$.
\end{lemma}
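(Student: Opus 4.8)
The plan is to refine a measurable foliated atlas of $\GG$ so that each resulting chart becomes, \emph{simultaneously}, a foliated chart of $\FF$. Fix a measurable foliated atlas $\{(V_j,\psi_j)\}_{j\in\N}$ of the finite dimensional lamination $(Y,\GG)$, with $\psi_j\colon V_j\to D_j\times S_j$, where $D_j$ is an open disk in $\R^{\dm\GG}$ and $S_j$ is a standard Borel space. Recall that the leaves of $\GG$ lie inside the leaves of $\FF$ (this is what makes the statement meaningful, since a plaque of $\FF$ is to be a foliated chart of $\GG$); hence each $\GG$-plaque $\psi_j^{-1}(D_j\times\{s\})$ lies in a single leaf of $\FF$, and the relation ``lying in the same leaf of $\FF$'' restricts to an equivalence relation on the transversal $S_j$, whose classes $S_j^L=\{\,s\in S_j\mid \psi_j^{-1}(D_j\times\{s\})\subset L\,\}$ are open subsets of codimension-$\dm\GG$ Hilbert submanifolds of the corresponding leaves $L$ of $\FF$. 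The central observation is that $S_j$, together with this induced structure, is itself a measurable Hilbert lamination: its leaves are the pieces $S_j^L$, and its transversals are exactly the transversals of $\FF$.

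First I would make this observation precise. Let $(W,\phi)$ be a foliated measurable chart of $(X,\FF)$, $\phi\colon W\to B\times T$, meeting $S_j$. Inside each plaque $B\times\{t\}$ the set $S_j$ appears as the slice transverse to the finite dimensional foliation $\GG|_{B\times\{t\}}$; under the coordinates $\psi_j$ this slice is trivialized as $\{0\}\times S_j$, and as $t$ ranges over the $\FF$-transverse parameter these slices vary measurably. Combining this trivialization with the Kunugui--Novikov theorem (Theorem~\ref{t:Srivastava}) and Lusin's theorem (Proposition~\ref{p:Lusin1}) to select the dependence on $t$ in a Borel way, one decomposes $S_j$ into countably many measurable charts of the form $R\times T'$, where $R$ is an open ball of a separable Hilbert space (a piece of a slice $S_j^L$) and $T'$ is a transversal of $\FF$. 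Thus $S_j$ is a measurable Hilbert lamination whose transversals are those of $\FF$.

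Next I would assemble the charts. Writing $S_j=\bigcup_m (R_{j,m}\times T_{j,m})$ through the charts just produced, the $\GG$-chart factorizes as
$$
V_j\;\cong\;D_j\times S_j\;=\;\bigcup_m\bigl(D_j\times R_{j,m}\bigr)\times T_{j,m}\;.
$$
Since $D_j\times R_{j,m}$ is an open convex subset of $\R^{\dm\GG}\times\HH$, it is homeomorphic to an open ball $B_{j,m}$ of a separable Hilbert space, so each
$$
U_{j,m}=\psi_j^{-1}\bigl((D_j\times R_{j,m})\times T_{j,m}\bigr)
$$
is a measurable foliated chart of $(X,\FF)$ of the form $B_{j,m}\times T_{j,m}$; after re-indexing we obtain a countable family $\{U_n\cong B_n\times T_n\}_{n\in\N}$ covering $Y$. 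Finally, on each plaque $B_n\times\{t\}=\psi_j^{-1}\bigl((D_j\times R_{j,m})\times\{t\}\bigr)$ the foliation $\GG$ is precisely the product foliation with leaves $D_j\times\{r\}$ ($r\in R_{j,m}$); that is, the plaque is the $\GG$-foliated chart $D_j\times R_{j,m}$ inside the leaf $L_t$ of $\FF_Y$, which is exactly the required property.

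The hard part will be the measurable trivialization underlying the second paragraph: showing that the transverse slices $S_j\cap(B\times\{t\})$ can be parametrized by a single Hilbert ball $R$ simultaneously and in a Borel manner over the $\FF$-transverse parameter $t$. This is where the product structure supplied by the $\GG$-charts must be matched with the one supplied by the $\FF$-charts, combining the Kunugui--Novikov theorem and Lusin selection exactly as in the proof of Lemma~\ref{l:cocycle}; note that the infinite dimensionality of the leaves of $\FF$ plays no essential role here, since the slices live transversally to the finite dimensional $\GG$ and their regularity comes from the $\GG$-charts.
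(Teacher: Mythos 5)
Your argument runs in the wrong direction, and the step you defer to the end (``the hard part'') is precisely where it breaks down. The central observation --- that the transversal $S_j$ of a $\GG$-chart, partitioned into the sets $S_j^L$, is a measurable Hilbert lamination whose leaves are open subsets of codimension-$\dm\GG$ Hilbert submanifolds of the $\FF$-leaves --- is unjustified, and it is false for a general $\GG$-atlas. A measurable chart $\psi_j\colon V_j\to D_j\times S_j$ of $(Y,\GG)$ is an MT-isomorphism for the topology of $Y$, which is the $\GG$-leaf topology: that topology is (locally) discrete in every direction transverse to $\GG$, including the directions transverse to $\GG$ but tangent to $\FF$. Hence $\psi_j$ is a homeomorphism only along the $\GG$-plaques and merely a Borel isomorphism transversally; it retains no trace of how the $\GG$-plaques sit topologically inside a leaf of $\FF$. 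Concretely, let $X=\R^2$ be the one-leaf lamination $\FF$, let $Y=X$ with $\GG$ the foliation by horizontal lines, and let $\psi(x,y)=(x-f(y),y)$ with $f$ Borel and nowhere continuous: this is a legitimate measurable chart of $(Y,\GG)$, and its transversal $\psi^{-1}(\{0\}\times\R)$ is the graph of $f$, which is not an open subset of any $1$-dimensional submanifold of $\R^2$. So for an arbitrary $\GG$-atlas the decomposition $S_j=\bigcup_m R_{j,m}\times T_{j,m}$ with $R_{j,m}$ open balls inside $\FF$-leaves simply does not exist; the regularity you need transverse to $\GG$ but inside the $\FF$-leaves does not ``come from the $\GG$-charts'', because the $\GG$-charts carry none.

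The usable regularity lives on the other side, and that is how the paper argues: since $Y$ is measurable and open in $X$, an $\FF$-atlas restricts to a measurable foliated atlas of $(Y,\FF_Y)$, and each $\FF$-plaque is an honest open subset of a Hilbert manifold on which $\GG$ restricts to a genuine (topological) finite-dimensional foliation. Inside each plaque one takes ordinary foliated charts for $\GG$ --- in the paper, products $B^k\times B'$ with $B^k$ a ball tangent to $\GG$ ($k=\dm\GG$) and $B'$ a ball in the orthogonal complement within the leaf --- and then Kunugui--Novikov (Theorem~\ref{t:Srivastava}) and Lusin selection (Proposition~\ref{p:Lusin1}) are used to organize these choices measurably in the $\FF$-transverse parameter. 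The resulting charts $(B^k\times B')\times T$ are simultaneously $\FF$-charts and, reread as $B^k\times(B'\times T)$, $\GG$-charts, which is exactly the statement. Your construction could only be repaired by first producing a $\GG$-atlas whose chart transversals are nicely embedded in the $\FF$-leaves and vary measurably over the $\FF$-transverse direction --- but that is essentially the lemma itself, so the repair would be circular; the induction of structure has to start from the $\FF$-charts.
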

\begin{proof}
Let $U$ be a measurable foliated chart of \FF. Since $Y$ is open in $X$, $U\cap Y$ is also measurable and open. Hence a measurable foliated atlas induces a covering of $Y$ by measurable open sets and a measurable foliated atlas of $(Y,\FF_Y)$. Since $\GG$ is a sublamination of $\FF_Y$, by Theorem~\ref{t:Srivastava}, we can choose a measurable foliated atlas such that the plaques are products of the form $B^k\times B'$ where $B^k$ are open balls in $\R^k$ where $\dim \GG=k$ and $B'$ is a ball in the orthogonal complement of $\GG$ in a leaf of $\FF$ centered in the origin. Of course the projection $\pi:(B^k\times B)\times T\to (B^k\times\{0\})\times T$ defines an MT-map that works like a tubular neighborhood of a plaque of a family of leaves of $\GG$. Of course $B^k\times (B'\times T)$ is a measurable chart for $\GG$, hence the family of tubular neighborhoods is a measurable atlas of $\GG$ and $\FF_Y$ simultaneously.
\end{proof}

\begin{proof}[Proof of Theorem \ref{t:MeasTanCrit}]
By Remark~\ref{r:medtube} and Lemma~\ref{l:lowaproxim}, there
exists a disjoint family of measurable open sets,
$\{U_i\}$ ($i\in\N\cup\{0\}$), such that each $U_i$ contains and contracts to $C_i$, and $\Cat(U_i,\FF,\LB)\leq\LB(C_i)$. We also assume that $\overline{U_i}\subset \widetilde{U_i}$, where $\widetilde{U_i}$ is another categorical measurable open set that contracts to $C_i$.

Let $U'_0=\bigcup_{n\in\N}\phi_{-n}(U_0)$. This set is open since $C_0(f)$ consists of relative minima, and contracts to $C_0(f)$ by using the MT isotopy $\phi$. The set $X_1=X\setminus U'_0$ is a measurable closed set, and $C_1(f)$ is the set of relative minima of the restriction $f|_{X_1}$. The set $X_1$ consists of the critical points that do not belong to $C_0(f)$ and the regular points connecting these critical points according to the relation ``$\ll$''. Let $F_1=U_1\cap X_1$ and $F'_1=\bigcup_{n\in\N}\phi_{-n}(F_1)$. The set $F'_1$ is open in $X_1$ and closed in $X$. Let us prove that $F'_1$ is contained in a measurable open set $U'_1$ such that there exists a measurable deformation $H$ with $H(U'_1\times\{1\})\subset \widetilde{U_1}$.

Of course, $(X\setminus\Crit_\FF(f),\phi)$ is a measurable lamination of dimension $1$, where the leaves are the flow lines of $\phi$. These flow lines are embedded submanifolds and they admit a countable covering by measurable foliated charts in the sense of Lemma~\ref{l:measurable tubular neighborhoods}. Therefore there exists a measurable atlas of $(X\setminus\Crit_\FF(f),\phi)$, $\{(W_n,\varphi_n)\}_{n\in\N}$, with $\varphi_n:W_n\to B^1\times B_n\times T_n$, where $B^1$ is an open interval in $\R$,  $B_n$ is an open ball centered at the origin in a separable Hilbert space, and $T_n$ is a standard space. We can suppose also that this measurable atlas is locally finite and let $\pi_n:W_n\to \varphi_n^{-1}(B^1\times\{0\}\times T_n)$ be given by the canonical projection $B^1\times B_n\times T_n\to B^1\times\{0\}\times T_n$.

By similar arguments, we can suppose that $F'_1\subset\bigcup_nW_n\subset \bigcup_{i\in\N}\phi_{-i}(U_1)$, $\varphi_n(B^1\times \{0\}\times T_n)\subset F'_1$ and $\bigcup_nW_n$ is a {\em semisaturated\/} set; i.e., if $x\in\bigcup_nW_n$ then $\phi_t(x)\in\bigcup_nW_n$ for all $t\in [0,\infty)$. Let $\{\lambda_n\}$ be a measurable partition of unity subordinated to $\{W_n\}$ \cite{Bermudez-Hector} such that each $\lambda_n$ is continuous on $\varphi_n^{-1}(B^1\times B_n\times\{z\})$ for all $z\in T_n$. For each $x\in \bigcup_nW_n$, let $I(x)\subset\N$ be the set of numbers $n$ such that the semiorbit $\phi_{[0,\infty)}(x)$ meets $W_n$. The isotopy $\phi_t|_{F'_1}$ contracts $F'_1$ to $C_1$. We extend the deformation $\phi_t|_{F'_1\setminus C_1(f)}$ to the neighborhood $\bigcup_nW_n$. This extension can be defined as follows: for $x\in \bigcup_nW_n$, $t\in\R$ and $n\in I(x)$, there is a unique positive real number  $r(x,t,n)$ such that $\phi_{r(x,t,n)}(x)=\gamma(x)\cap \pi_n^{-1}(\phi_t(\pi_n(x)))$. Let $H:V_1\times\R\to X$ be the continuous map defined by $H(x,t)=\phi_{s(x,t)}(x)$, where
$$
  s(x,t)=\sum_{k\in I(x)}\lambda_k(x)\,r(x,t,k)\;.
$$ 
For $x\in \bigcup_nW_n$ and $t\in\R$, there exists $k_1,k_0\in I(x)$ such that $r(x,t,k_1)\leq s(x,t)\leq r(x,t,k_0)$. It is clear that there exists $\lim_{t\to \infty}\phi_{r(x,t,n)}(x)\in \overline{U_1}\subset\widetilde{U_1}$.  Let $p\in C_1$ and let $x\in F'_1\setminus C_1$ with $\omega(x)=p$. Since $\bigcup_nW_n$ is semisaturated and it is contained in $\bigcup_{i\in\N}\phi_{-i}(U_1)$, for all $r(x,t,k_1)< t' <r(x,t,k_0)$, $\phi_{t'}(x)\in \widetilde{U_1}$ for $t$ large enough. Therefore  $\lim_{t\to \infty}H(x,t)\in\widetilde{U_1}$ for all $x\in\bigcup_{n}V_n$. Then the measurable open subset $V'_1=\bigcup_{n}V_n$ is $\FF$-categorical (by a standard change of parameter). Finally,  if $\widetilde{U_1}$ is small enough, $U'_1=V'_1\cup \widetilde{U_1}$ is $\FF$-categorical by a telescopic argument  \cite{Hatcher}  and $F'_1\subset U'_1$.

This process can be continued inductively by taking $X_n=X\setminus (U'_0\cup\bigcup_{i=1}^{n-1}F'_i)$ and using the same trick to define $U'_n$, observing that $C_n(f)$ is the set of relative minima of $f|_{X_n}$.
\end{proof}

\begin{proof}[Proof of Theorem~\ref{t:LSMedFol}]
By Remark~\ref{r:critinf}, we can restrict the study to the case where $\Crit_\FF(f)$ is an isolated transversal. The previous proof also shows that $\Cat(\FF,\LB)$ is a lower bound for the sum of the measures of the critical sets. Since the critical sets form a partition of $\Crit_\FF(f)$, the proof is complete.
\end{proof}

\begin{ack}
This paper contains part of my PhD thesis, whose advisor is Prof. Jes\'{u}s A. \'{A}lvarez L\'{o}pez.
\end{ack}


\begin{thebibliography}{99}
\bibitem{Ballman} {\sc W. Ballman}. \textit{Closed geodesics on positively curved manifolds}. Ann. of Math. 116, 213--247 (1982).

\bibitem{Bermudez} {\sc M. Berm\'udez}. \textit{Laminations Bor\'eliennes}. Tesis, Universit\'e Claude Bernard-Lyon 1 (2004).

\bibitem{Bermudez-Hector} {\sc M. Berm\'udez, G. Hector}.
{\it Laminations hyperfinies et rev\^etements}. Ergod. Th. Dynam.
Sys. 26, 305--339 (2006).

\bibitem{Candel-Conlon}{\sc A. Candel, L. Conlon}.
{\it Foliations I}. Amer. Math. Soc. (1999)

\bibitem{HellenColman}{\sc H. Colman}.
{\it Categor\'{i}a L-S en foliaciones}. Tesis, Departamento de
Xeometr\'{i}a e Topolox\'{i}a, Universidad de Santiago de
Compostela (1998).

\bibitem{Macias}
{\sc H. Colman, E. Mac\'ias Virg\'os}. {\it Tangential
Lusternik-Schnirelmann category of foliations}.  J. London Math.
Soc. 2, 745--756 (2002).


\bibitem{Connes}{\sc A. Connes}.
{\it A survey of foliations and operator algebras}. Proc.
Sympos. Pure Math. 38, 520--628 (1982).

\bibitem{Crandall-Pazy} {\sc M.G. Crandall, A. Pazy}. {\it Semi-groups of nonlinear contractions
and dissipative sets}. J. Funct. Analysis 3, 376--418, (1969).

\bibitem{Dubrovin-Novikov-Fomenko}{\sc B. Dubrovin, S. Novikov, A. Fomenko}.
{\it Modern Geometry -Methods and Applications: Part III: Introduction to Homology Theory.} Graduate Texts in Mathmatics, Springer-Verlag (1990).

\bibitem{Epstein}{\sc D.B.A.~Epstein}.
{\it Foliations with all leaves compact}. Ann. Inst. Fourier 26,
265--282 (1976).

\bibitem{Epstein2} \bysame.
{\it Periodic flows on $3$-manifolds.} Ann. Math. 95, 68--82
(1972).

\bibitem{Hatcher} {\sc A. Hatcher}. \textit{Algebraic Topology}. Cambridge University Press (2002).

\bibitem{Heitsch-Lazarov}{\sc J.L.~Heitsch, C.~Lazarov}.
{\it Homotopy invariance of foliation Betti numbers}. Invent.
Math. 104, 321--347 (1991).

\bibitem{James}{\sc I.M. James}.
{\it On category, in the sense of Lusternik-Schnirelmann}.
Topology 17, 331--348 (1978).

\bibitem{Kallman}{\sc R. Kallman}.
{\it Certain quotient spaces are countably separated, III}. J.
Funct. Analysis 22, 225--241 (1976).

\bibitem{Kechris} {\sc A.S. Kechris}. \textit{Classical Descriptive Set
Theory}. Graduate Texts in Mathematics, Springer-Verlag, New
York (1994).

\bibitem{Menino1} {\sc C. Meni\~no}. {\it Transverse invariant measures extend to the ambient space}. Preprint, arXiv:1103.4696v1.

\bibitem{Menino2} \bysame. {\it Cohomology of measurable laminations}. Preprint, arXiv:1105.5948v1.

\bibitem{Palais}
{\sc R.S. Palais}. {\it Lectures on Morse Theory}.  Lecture Notes, Brandies and Harvard Universities (1962).

\bibitem{Lusternik-Schnirelmann}
{\sc L. Lusternik, L. Schnirelmann}. {\it M\'ethodes Topologiques
dans les Problemes Variationnels}. Herman, Paris (1934).

\bibitem{Schwartz}
{\sc J.T. Schwartz}. {\it Generalizing the
Lusternik-Schnirelmann Theory of Critical Points}. Comm. Pure
Appl. Math. 17, 307--315 (1964).

\bibitem{Vogt-Singhof}{\sc W. Singhof, E. Vogt.}
{\it Tangential category of foliations}. Topology 42, 603--627
(2003).

\bibitem{Srivastava} {\sc S.M. Srivastava}. \textit{A course on Borel
sets}. Graduate Texts in Mathematics, Springer, 1998.

\bibitem{Takesaki}{\sc M. Takesaki}.
{\it Theory of Operator Algebras}. Springer-Verlag, New York,
Heidelberg, Berlin (1979).
\end{thebibliography}
\end{document}